\documentclass[11pt,psamsfonts]{amsart}

\usepackage{amssymb}
\usepackage{hhline}

\begin{document}

\setlength{\parindent}{0pt} \setlength{\parskip}{1ex}

\newtheorem{Le}{Lemma}[section]
\newtheorem{Th}[Le]{Theorem}

\newtheorem{Thm}{Theorem}

\theoremstyle{definition}
\newtheorem{Def}[Le]{Definition}
\newtheorem{Problem}[Le]{Problem}

\theoremstyle{remark}
\newtheorem{Rem}[Le]{Remark}

\title[Special metrics with $Q^{1,1,1}$ or $M^{1,1,0}$ as principal orbit]{Special cohomogeneity one metrics with $Q^{1,1,1}$
or $M^{1,1,0}$ as principal orbit}
\author{Frank Reidegeld}
\address{Department Mathematik, Bereich Analysis und Differentialgeometrie, Universit\"at Hamburg, Bundesstra\ss e 55, 20146
Hamburg, Germany}
\email{frank.reidegeld\char"40 math.uni-hamburg.de}
\subjclass[2000]{Primary 53C29, Secondary 53C10, 53C25,\\ \parbox{9pt}{$ $} 53C44. \medskip \\  
\parbox{9pt}{$ $} This work was supported by the SFB 676 of the Deutsche
Forschungsgemeinschaft.\\}

\begin{abstract}
We classify all cohomogeneity one manifolds with principal orbit
$Q^{1,1,1}=SU(2)^3/U(1)^2$ or $M^{1,1,0}=(SU(3)\times
SU(2))/(SU(2)\times U(1))$ whose holonomy is contained in
$\text{Spin}(7)$. Various metrics with different kinds of
singular orbits can be constructed by our methods. It turns out
that the holonomy of our metrics is automatically $SU(4)$ and 
that they are asymptotically conical. Moreover, we investigate 
the smoothness of the metrics at the singular orbit.
\end{abstract}

\maketitle

\section{Introduction}

The subject of this article are eight-dimensional manifolds with
special holonomy. These manifolds are not only of interest for
purely mathematical reasons, but they are also studied in super
string theory (cf. Acharya, Gukov \cite{Ach}, Cveti\v{c} et al.
\cite{Cve01},\cite{Cve},\cite{Cve03}, Gukov, Sparks \cite{Guk}).
Our aim is to classify all metrics of a special type whose
holonomy is contained in $\text{Spin}(7)$. Explicit metrics with
holonomy $\text{Spin}(7)$ or $SU(4)$ are hard to construct. This
problem becomes a lot of easier if we assume that the metric is
preserved by a cohomogeneity one action. In
this situation, our task is equivalent to solving a system of ordinary
differential equations. Examples of cohomogeneity one
metrics with holonomy $\text{Spin}(7)$ or $SU(4)$ can be found in
Bazaikin \cite{Baz}, Bryant, Salamon \cite{BrS}, Cveti\v{c} et al.
\cite{Cve01}, \cite{Cve02},\cite{Cve},\cite{Cve03}, and Herzog, Klebanov
\cite{Herzog}.

If we assume that the group which acts with cohomogeneity one is compact,
the number of possible principal orbits is countable \cite{Rei1}. Furthermore, the
space of all homogeneous $G_2$-structures on a fixed principal orbit is finite-dimensional. This fact makes it possible to
obtain partial classification results. In this article, we assume that the principal orbit is of a certain kind. First, we
consider coset spaces of type $SU(2)^3/U(1)^2$ which are denoted by $Q^{k,l,m}$. The indices $k$, $l$, and $m$ describe the
embedding of $U(1)^2$ into $SU(2)^3$. Second, we investigate quotients of type $(SU(3)\times SU(2))/(SU(2)\times U(1))$ where
the semisimple part of $SU(2)\times U(1)$ is embedded into the first factor of $SU(3)\times SU(2)$. These spaces are denoted by
$M^{k,l,0}$. If the third index is non-zero, we obtain a space which is covered by $M^{k,l,0}$. The meaning of the indices
$k$ and $l$ will be explained in Section \ref{M110}. At the singular orbit, a space of cohomogeneity one may have a singularity.
In this article, we allow orbifold singularities but exclude all other ones. Our results can be summed up as follows:

\begin{Thm}
Let $(M,\Omega)$ be a cohomogeneity one $\text{Spin}(7)$-manifold
whose principal orbits are of type $Q^{k,l,m}$. In this situation, the
following statements are true:

\begin{enumerate}
    \item The principal orbits are $SU(2)^3$-equivariantly diffeomorphic to $Q^{1,1,1}$.
    \item The metric $g$ which is associated to $\Omega$ has holonomy $SU(4)$.
    \item If $M$ has a singular orbit, it has to be $S^2\times S^2$ or $S^2\times S^2\times S^2$.
    Any $SU(2)^3$-invariant metric on the singular orbit can be uniquely extended to a complete
    cohomogeneity one metric with holonomy $SU(4)$. For any choice of the singular orbit and its
    metric, $(M,g)$ is asymptotically conical. If the singular orbit is $S^2\times S^2$, $g$ is
    a smooth metric. In the other case, the metric cannot be smooth at the singular orbit.
\end{enumerate}
\end{Thm}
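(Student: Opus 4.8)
The plan is to rephrase the torsion-free condition for $\Omega$ as an evolution equation for invariant $G_2$-structures and then to study the resulting system of ordinary differential equations. A cohomogeneity one $\text{Spin}(7)$-manifold with principal orbit $P=G/K$ restricts on each principal orbit to a $G$-invariant $G_2$-structure $\varphi(t)$, and the torsion-freeness of $\Omega$ is equivalent to a Hitchin-type flow for $t\mapsto\varphi(t)$; since here $G=SU(2)^3$ and $K=U(1)^2$, the space of invariant $G_2$-structures is finite-dimensional, so the flow is a genuine ODE system in finitely many functions of $t$. First I would decompose the isotropy representation of $U(1)^2$ in $Q^{k,l,m}$ — it splits as a trivial summand $\mathbb R$ (the ``circle direction'' of $Q^{k,l,m}\to(S^2)^3$) plus three $2$-dimensional summands $V_{w_1}\oplus V_{w_2}\oplus V_{w_3}$ with weights $w_i\in\mathbb Z^2$ depending on $(k,l,m)$ — and read off the $G$-invariant $3$-forms. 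A positive (i.e.\ $G_2$-type) invariant $3$-form requires a ``cross term'' lying in $\Lambda^1 V_{w_1}\otimes\Lambda^1 V_{w_2}\otimes\Lambda^1 V_{w_3}$, hence $w_1+w_2+w_3=0$ up to signs, and running through the cases shows this forces $(k,l,m)$ to be equivalent to $(1,1,1)$; this is statement (1).

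For $Q^{1,1,1}$ I would then use that it carries a canonical $SU(2)^3$-invariant Sasaki--Einstein structure, so that every invariant $G_2$-structure contains a distinguished invariant almost-contact metric datum whose Reeb field, together with $\partial_t$, defines an almost complex structure $J$ on $M$. I would check that the $\text{Spin}(7)$-flow preserves this ``Sasaki-type'' ansatz, so that $(M,g,J)$ is Kähler; being also Ricci-flat it has $\mathrm{Hol}(g)\subseteq SU(4)$. That the holonomy is exactly $SU(4)$ follows once statement (3) produces the asymptotic cone $C(Q^{1,1,1})$, since then $\mathrm{Hol}(g)\supseteq\mathrm{Hol}\big(C(Q^{1,1,1})\big)=SU(4)$; alternatively one argues directly that $g$ is irreducible (the orbit structure prevents it from being a Riemannian product) and not hyperkähler ($Q^{1,1,1}$ is Sasaki--Einstein but not $3$-Sasakian), which already rules out every proper subgroup of $SU(4)$ in Berger's classification.

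For statement (3) I would enumerate the closed subgroups $H$ with $K\subsetneq H\subseteq G$ and decide, via the slice representation, which can occur at a singular orbit: requiring the slice $H$-module to carry the linearised Calabi--Yau datum leaves only the collapse of one $SU(2)$-factor, with singular orbit $G/H\cong S^2\times S^2$ and $S^3$-slice, and the collapse of the circle fibre of $Q^{1,1,1}\to(S^2)^3$, with singular orbit $S^2\times S^2\times S^2$ and a $2$-dimensional (a priori orbifold) slice; all other subgroups are obstructed. Fixing the singular orbit together with a $G$-invariant metric on it, the regularity conditions at $t=0$ pin down the initial Taylor data, the corresponding initial value problem for the ODE system has a unique solution, and I would show that it exists for all $t>0$, is complete, and converges as $t\to\infty$ to the Calabi--Yau cone metric on $C(Q^{1,1,1})$, which is the asserted asymptotic conicality. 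Smoothness at the singular orbit is then read off from whether the $t=0$ regularity conditions are compatible: for $S^2\times S^2$ the $SU(2)$-slice acts as the standard representation on $\mathbb C^2$ and all conditions can be met, so $g$ is smooth, whereas for $S^2\times S^2\times S^2$ the circle slice acts on the normal plane $\mathbb C$ with weight different from $\pm1$, giving a genuine orbifold point at which no smooth extension is possible.

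The main obstacle is the global analysis of the ODE system in step (3): proving long-time existence together with completeness and the precise conical asymptotics is substantially harder than the essentially finite representation-theoretic bookkeeping behind statements (1) and (2), and it is there that finite-time degeneration or blow-up of the orbit metric must be excluded — possibly by integrating the system explicitly in the spirit of Calabi's construction on canonical bundles, or by a monotonicity argument linearising around the cone solution.
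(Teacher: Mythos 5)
The decisive gap is in your smoothness analysis in step (3). Whether the metric extends smoothly across the singular orbit is \emph{not} decided by the slice representation alone: for both candidate singular orbits the space $M$ is a smooth disc bundle (there is no orbifold point in either case), and smooth $SU(2)^3$-invariant metrics with either orbit structure do exist. The actual criterion is quantitative (Eschenburg--Wang): over $S^2\times S^2\times S^2$ the collapsing circle must have length $2\pi t+O(t^2)$, which in the notation of (\ref{Q111GenMetric}) means $|f'(0)|=\tfrac{3}{2}$, whereas the $\text{Spin}(7)$ system (\ref{HolRedQ111}) forces $f'(0)=-3$; the resulting singularity is a cone angle of $4\pi$ along the singular orbit, i.e.\ the circle is twice too long, which is the \emph{opposite} of an orbifold singularity (those have angle $2\pi/n$). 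So your explanation ``the circle slice acts with weight different from $\pm1$, giving a genuine orbifold point'' is wrong both as a reason and as a description, and an argument of that type could not distinguish the two cases: smoothness over $S^2\times S^2$ is likewise not a consequence of the slice being the standard $SU(2)$-representation, but of evaluating the degenerate system at $t=0$ (l'H\^opital) and finding $|a'(0)|=\tfrac{1}{2}$, $|f'(0)|=\tfrac{3}{2}$ together with the parity of $a,b,c,f$, which happens to match the required Taylor data. Relatedly, ``the initial value problem has a unique solution'' is not routine because the system degenerates at $t=0$ ($f(0)=0$, and $f/a$ with $a(0)=0$ in the $S^2\times S^2$ case); in the paper this, the completeness, and the conical asymptotics all come from integrating the system explicitly (it is linear in $f^2$ after introducing $F$ with $F'=f$), which is exactly the step you defer.

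Your holonomy argument in step (2) also has gaps. The inclusion $\mathrm{Hol}(g)\supseteq\mathrm{Hol}(C(Q^{1,1,1}))$ for an asymptotically conical metric is not an off-the-shelf fact; one must show that parallel forms on $M$ induce parallel forms on the asymptotic cone, which requires the AC rates you have not yet established. The fallback ``not hyperk\"ahler since $Q^{1,1,1}$ is not $3$-Sasakian'' only rules out the \emph{cone} metric being hyperk\"ahler, not an arbitrary cohomogeneity one metric with this principal orbit, and ``the orbit structure prevents a Riemannian product'' is asserted rather than proved. The paper instead argues: the circle of parallel $\text{Spin}(7)$-structures obtained by pulling back $\Omega$ under the isometries $S_\theta$ forces $\mathrm{Hol}\subseteq SU(4)$ (Lemma \ref{SU4Lemma}); then any parallel vector field or K\"ahler form may be taken $SU(2)^3$-invariant, and the invariant closed two-forms are computed, using (\ref{HolRedQ111}), to be exactly $\pm(a^2e^{12}+b^2e^{34}+c^2e^{56}+f\,e^7\wedge dt)$, which excludes both a parallel vector field and a K\"ahler triple, so the holonomy is all of $SU(4)$. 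Your Sasaki-type ansatz for K\"ahlerness is plausible but amounts to the same verification that the associated two-form is closed modulo the ODE system; it does not by itself give the lower bound on the holonomy. Step (1) and the enumeration of intermediate subgroups in step (3) are essentially the paper's arguments (weight comparison with the Cartan subalgebra of $\mathfrak{g}_2$, and Mostert's sphere condition ruling out $U(1)\times SU(2)^2$ and $SU(2)^3$), and are fine as sketched.
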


\begin{Thm}
Let $(M,\Omega)$ be a cohomogeneity one $\text{Spin}(7)$-orbifold whose principal orbit is of type $M^{k,l,0}$. In this
situation, the following statements are true:

\begin{enumerate}
    \item The principal orbit is $SU(3)\times SU(2)$-equivariantly diffeomorphic to $M^{1,1,0}$.
    \item The metric $g$ which is associated to $\Omega$ has holonomy $SU(4)$.
    \item If $M$ has a singular orbit, it has to be $S^2$, $\mathbb{CP}^2$, or $S^2\times\mathbb{CP}^2$.
    Any $SU(3)\times SU(2)$-invariant metric on the singular orbit
    can be uniquely extended to a complete cohomogeneity one metric with
    holonomy $SU(4)$. For any choice of the singular orbit and its
    metric, $(M,g)$ is asymptotically conical. If the singular orbit is $S^2$
    or $S^2\times \mathbb{CP}^2$, $g$ cannot be a smooth orbifold metric near the
    singular orbit. In the second case, $g$ is always smooth.
\end{enumerate}
\end{Thm}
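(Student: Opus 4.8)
The plan is to turn the classification into the analysis of an explicit system of ordinary differential equations, in four steps. First I would compute the isotropy representation of $M^{k,l,0}$. Writing $\mathfrak g=\mathfrak{su}(3)\oplus\mathfrak{su}(2)$ and $\mathfrak h=\mathfrak{su}(2)\oplus\mathfrak{u}(1)$, the reductive complement splits as an $H$-module into a $4$-dimensional summand coming from the off-diagonal part of $\mathfrak{su}(3)$, a $2$-dimensional summand from $\mathfrak{su}(2)$, and a $1$-dimensional trivial summand spanned by the remaining central direction; in particular $M^{k,l,0}$ always carries a canonical invariant $1$-form $\eta$, hence an invariant $SU(3)$-structure. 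Computing the space of invariant $3$-forms and requiring that it contain a form of $G_2$-type --- which is necessary for a cohomogeneity-one $\text{Spin}(7)$-orbifold with this principal orbit to exist, since contracting $\Omega$ with the normal direction produces such a form on every principal orbit --- imposes a matching condition on the $U(1)$-weights carried by the summands, whose only solutions give spaces $SU(3)\times SU(2)$-equivariantly diffeomorphic to $M^{1,1,0}$; this is part (1). The analogous weight count for $SU(2)^3/U(1)^2$ forces the corresponding spaces to be $Q^{1,1,1}$ and gives part (1) of the preceding theorem.

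Next I would write out the general $SU(3)\times SU(2)$-invariant cocalibrated $G_2$-structure $\varphi_t$ on $M^{1,1,0}$, that is, a metric $g_t=a(t)^2g_1+b(t)^2g_2+c(t)^2\,\eta\otimes\eta$ together with the corresponding $3$-form (allowing a further parameter for a relative phase), and impose the Hitchin evolution $\partial_t(\ast_{\varphi_t}\varphi_t)=d\varphi_t$, $d(\ast_{\varphi_t}\varphi_t)=0$. These equations are equivalent to $d\Omega=0$ on $I\times M^{1,1,0}$ and hence, by Fern\'andez's theorem, to $\mathrm{Hol}(g)\subseteq\text{Spin}(7)$. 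Since every invariant $G_2$-structure here is of the form $\varphi_t=\eta_t\wedge\omega_t+\mathrm{Re}\,\Theta_t$, i.e. reduces to an $SU(3)$-structure, the evolving $\text{Spin}(7)$-form $\Omega$ contains a parallel $(1,1)$-form and a parallel complex volume form, so in fact $\mathrm{Hol}(g)\subseteq SU(4)$. That the holonomy is not a proper subgroup I would read off the explicit solutions: the metric is not a Riemannian product, it is not hyper-K\"ahler (which would force $M^{1,1,0}$ to be $3$-Sasakian, and it is not), and it admits no parallel vector field. This proves part (2).

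Finally I would study the ODE system itself. It has a distinguished self-similar solution, the metric cone over the homogeneous Sasaki--Einstein metric on $M^{1,1,0}$, which is Calabi--Yau; linearising the rescaled flow at this solution shows that every complete solution near it is asymptotically conical to it, and a monotonicity argument keeps $a,b,c$ positive and growing linearly for all large $t$, giving completeness together with the asymptotically conical property for every solution. To enumerate the complete metrics I would classify the intermediate subgroups $H\subsetneq K\subseteq G$: the only candidates for a singular orbit $G/K$ over which the collapsing fibre $K/H$ is a sphere (possibly a spherical space form) are $S^2$, $\mathbb{CP}^2$ and $S^2\times\mathbb{CP}^2$, with collapsing fibres $S^5$, $S^3$ and $S^1$. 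For each I would impose on the Taylor expansions of $a,b,c$ and of $\varphi_t$ at $t=0$ the conditions for the cohomogeneity-one structure to close up over $G/K$, show that they determine a unique complete solution extending any prescribed invariant metric on $G/K$, and then examine regularity in the normal directions: the expansion is that of a smooth --- in particular a smooth orbifold --- metric only for $G/K=\mathbb{CP}^2$, while for $G/K=S^2$ and $G/K=S^2\times\mathbb{CP}^2$ the leading coefficients are incompatible with smoothness, so the completed metric has a genuine, non-orbifold conical singularity along the singular orbit. This establishes part (3), and the same discussion proves the corresponding statement for $Q^{1,1,1}$.

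The step I expect to be the main obstacle is this last one: matching the small-$t$ asymptotics of the ODE solution to the smooth, or merely orbifold, structure of the normal disc bundle of each candidate singular orbit, while simultaneously controlling the solution globally --- positivity and linear growth of $a,b,c$ --- so as to obtain completeness and conical asymptotics for \emph{every} admissible initial condition rather than only for isolated ones.
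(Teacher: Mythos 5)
Your skeleton matches the paper's: part (1) by comparing isotropy weights with those of an $\mathfrak{su}(2)\oplus\mathfrak{u}(1)$ inside $\mathfrak{g}_2$, the singular orbits by listing the intermediate subgroups $K$ with $SU(2)\times U(1)\subsetneq K\subseteq SU(3)\times SU(2)$, and the regularity question by Taylor expansion at $t=0$ in the sense of Eschenburg--Wang. But two steps, as you state them, do not hold up. First, the holonomy reduction: from "every invariant $G_2$-structure is of the form $\eta_t\wedge\omega_t+\mathrm{Re}\,\Theta_t$" you conclude that $\Omega$ "contains a parallel $(1,1)$-form and a parallel complex volume form". That is a non sequitur: an invariant algebraic reduction of the structure group to $SU(4)$ compatible with the metric does not make the corresponding $2$-form parallel; closedness of the candidate K\"ahler form $a^2(e^{12}+e^{34})+\epsilon_2 b^2 e^{56}+\epsilon_3 c\,e^7\wedge dt$ is an additional first-order condition which must be checked against the structure equations, and it holds only for one choice of the relative signs. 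The paper obtains $\mathrm{Hol}(g)\subseteq SU(4)$ by a different mechanism: the normalizer of the isotropy group provides an isometric circle action which rotates the parallel $\text{Spin}(7)$-structure, and a circle of parallel $\text{Spin}(7)$-structures with a common underlying metric forces the holonomy into $SU(4)$ (Lemma \ref{SU4Lemma}); equivalently one verifies $d\eta=0$ directly from (\ref{HolRedM110}). Your exclusion of $Sp(2)$ via "hyper-K\"ahler would force $M^{1,1,0}$ to be $3$-Sasakian" also presupposes the conical asymptotics you have not yet established; the paper simply observes that the space of invariant $2$-forms is too small to carry three independent K\"ahler forms.

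Second, the global statements. Linearising the rescaled flow at the cone solution plus a monotonicity argument, which you yourself flag as the main obstacle, would at best give asymptotic conicality for solutions near the cone; it does not cover an arbitrary invariant initial metric on the singular orbit, nor does it give existence and uniqueness for the degenerate initial value problem at $t=0$ where $c$ (and possibly $a$ or $b$) vanishes. The missing idea that makes all of this immediate is that the system integrates explicitly: with $C(t)=\int_0^t c$, one gets $a^2=\tfrac34C+a_0^2$, $b^2=\tfrac12C+b_0^2$, and in the variable $C$ the equation for $c^2$ becomes linear, yielding the closed formulas (\ref{SolutionabM110}), (\ref{SolutioncM110}); uniqueness, completeness and $\widetilde c(t)=\mathrm{const}\sqrt t+O(1)$ (hence AC behaviour) for every admissible initial condition are then read off directly. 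Finally, in the singular-orbit analysis note that the collapsing fibre over $S^2$ is $S^5/\mathbb{Z}_3$, not $S^5$ -- this is precisely why $M$ is only an orbifold there -- and the smoothness verdicts come from the explicit boundary values ($c'(0)=8$ versus the required $4$ over $\mathbb{CP}^2\times S^2$, $c'(0)=\tfrac83$ versus $4$ over $S^2$, and $b'(0)=\pm1$, $c'(0)=4$ over $\mathbb{CP}^2$), computations your plan defers but which are the substance of part (3).
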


The metrics which we construct are also described in Cveti\v{c} et
al. \cite{Cve01},\cite{Cve03} and Herzog, Klebanov \cite{Herzog}.
These works are based on methods by Berard-Bergery \cite{BerBer},
Page, Pope \cite{PagePope}, and Stenzel \cite{Stenzel}.
Nevertheless, our proofs that there are no further metrics of the
above kind and that the holonomy automatically reduces to $SU(4)$
are new results. In particular, we prove that we
cannot deform the metrics from the literature into metrics with
holonomy $\text{Spin}(7)$ without loosing the $SU(2)^3$- or
$SU(3)\times SU(2)$-symmetry. Moreover, the smoothness of the
metrics at the singular orbit and the global shape of our manifolds
are studied in detail.

This article is organized as follows: In Section \ref{2ndSection}
and \ref{CohomOneSec}, we collect some facts on metrics with
exceptional holonomy and on cohomogeneity one manifolds. The
metrics with principal orbit $Q^{k,l,m}$ are studied in Section
\ref{Q111} and those with principal orbit $M^{k,l,0}$ in Section
\ref{M110}.

\section{Metrics with holonomy Spin($7$)}
\label{2ndSection}

In this section, we will collect some general facts on $\text{Spin}(7)$-manifolds. Later on, we have to deal not
only with the $\text{Spin}(7)$-structure on the manifold but also with the induced $G_2$-structure on the principal orbits.
Therefore, we first introduce the group $G_2$:

\begin{Def}
Let $\mathbb{O}$ be the division algebra of the \emph{octonions}. An $\mathbb{R}$-linear non-zero map $\phi: \mathbb{O}
\rightarrow \mathbb{O}$ satisfying $\phi(x\cdot y) = \phi(x)\cdot \phi(y)$ for all $x,y\in\mathbb{O}$ is called an
\emph{automorphism of $\mathbb{O}$}. The group of all automorphisms of $\mathbb{O}$ we call $G_2$ and its Lie algebra
$\mathfrak{g}_2$.
\end{Def}

\begin{Le}
\begin{enumerate}
    \item Any automorphism of $\mathbb{O}$ fixes $1$ and leaves its orthogonal complement $\text{Im}(\mathbb{O})$ invariant.
    This yields an irreducible seven-dimensional representation of $G_2$, which we call the \emph{standard representation of
    $G_2$}.
    \item The group which is generated by the left multiplications with unit octonions is isomorphic to $\text{Spin}(7)$. Its
    Lie algebra we denote by $\mathfrak{spin}(7)$. The action of this algebra on $\mathbb{O}$ is equivalent to the spinor
    representation of $\mathfrak{so}(7)$.
\end{enumerate}
\end{Le}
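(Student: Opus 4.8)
The plan is to extract from the multiplicative structure of $\mathbb{O}$ first a $7$-dimensional orthogonal representation of $G_2$ and then a Clifford-module structure on $\mathbb{O}$, and to pin down the resulting groups by a little standard structure theory; everything here is classical (Harvey and Lawson). For (1) I would first show $\phi(1)=1$: from $\phi(1)=\phi(1\cdot1)=\phi(1)^2$ and the absence of zero divisors in $\mathbb{O}$ we get $\phi(1)\in\{0,1\}$, and $\phi(1)=0$ would force $\phi(x)=\phi(x)\phi(1)=0$ for all $x$, against $\phi\neq0$; the same absence of zero divisors gives $\ker\phi=0$, so $\phi$ is bijective. Next, $\text{Im}(\mathbb{O})$ has the purely algebraic description $\{x\in\mathbb{O}:x^2\in\mathbb{R}_{\le0}\cdot1\}$ (expand $x^2$ for $x=\text{Re}(x)+\text{Im}(x)$), so it is $\phi$-invariant, and with $\phi(1)=1$ this shows $\phi$ preserves the splitting $\mathbb{O}=\mathbb{R}1\oplus\text{Im}(\mathbb{O})$. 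On $\text{Im}(\mathbb{O})$ one has $x^2=-|x|^2\cdot1$, hence $|\phi(x)|=|x|$ and, polarising, $\phi|_{\text{Im}(\mathbb{O})}\in O(\text{Im}(\mathbb{O}))\cong O(7)$; the representation is faithful because $\text{Im}(\mathbb{O})$ generates $\mathbb{O}$. For irreducibility I would use that $G_2$ acts transitively on $S^6\subset\text{Im}(\mathbb{O})$: the stabiliser of a unit imaginary octonion $e$ consists of derivations also annihilating $1$, hence preserving $e^{\perp}\cap\text{Im}(\mathbb{O})\cong\mathbb{C}^3$ and commuting with the complex structure $L_e$ there, and one identifies it with $\mathfrak{su}(3)$; a dimension count makes the orbit of $e$ open, and being compact in the connected $S^6$ it is all of $S^6$, so no nonzero proper invariant subspace can exist. (Alternatively, quote that $\mathfrak{g}_2$ is simple and $\text{Im}(\mathbb{O})$ is its smallest nontrivial representation.)

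For (2), identify $\mathbb{O}$ with $\mathbb{R}^8$ carrying the norm form and fix an orthonormal basis $e_1,\dots,e_7$ of $\text{Im}(\mathbb{O})$. Each left multiplication $L_u$ by a unit octonion is orthogonal, since $|ux|=|u|\,|x|$, and lies in $SO(8)$ by connectedness of $S^7$. The pivotal computation is the Clifford relation $L_{e_i}L_{e_j}+L_{e_j}L_{e_i}=-2\delta_{ij}\,\text{id}$: the case $i=j$ is left-alternativity, $e_i(e_ix)=(e_ie_i)x=-x$, and for $i\neq j$ the total antisymmetry of the associator $[a,b,c]=(ab)c-a(bc)$ gives $e_i(e_jx)+e_j(e_ix)=(e_ie_j+e_je_i)x=0$. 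Hence $e_i\mapsto L_{e_i}$ extends to a representation $\rho$ of the real Clifford algebra $\text{Cl}(7)$ of $(\mathbb{R}^7,-\langle\,\cdot\,,\,\cdot\,\rangle)$ on $\mathbb{O}$; since the even part $\text{Cl}(7)^0\cong\text{Cl}(6)\cong\text{End}_{\mathbb{R}}(\mathbb{R}^8)$ is simple of dimension $64$, the restriction of $\rho$ to it is an isomorphism, and $\mathbb{O}$ is the unique irreducible real $\text{Cl}(6)$-module, i.e.\ the real spinor module. Consequently $\text{Spin}(7)\subset\text{Cl}(7)^0$ — the subgroup generated by the products $vw$ of unit vectors of $\mathbb{R}^7=\text{Im}(\mathbb{O})$ — acts on $\mathbb{O}$ through the $8$-dimensional real spin representation $\Delta_7$, which is faithful and orthogonal, with image in $SO(8)$.

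It remains to identify this copy of $\text{Spin}(7)$ with the group $G$ generated by the left multiplications $L_e$ by unit \emph{imaginary} octonions $e$ — and restricting to imaginary octonions is necessary, since for a general unit octonion $u=(\cos\theta)1+(\sin\theta)e$ one has $L_u=(\cos\theta)\,\text{id}+(\sin\theta)L_e$, the operators $L_e$ span a complement of $\mathfrak{spin}(7)$ in $\mathfrak{so}(8)$ (because $[L_v,L_w]=2L_vL_w=2\rho(v\wedge w)$ already fills out $\mathfrak{spin}(7)$), so the group generated by \emph{all} unit-octonion left multiplications is $SO(8)$. For $G$ itself: the operators $L_vL_w=\rho(vw)$, $v,w\in S^6$, generate $\rho(\text{Spin}(7))$ and lie in $G$, while the central volume element of $\text{Cl}(7)$ yields an identity $L_e=\pm\,L_{f_1}\cdots L_{f_6}$ with $f_1,\dots,f_6$ an orthonormal basis of $e^{\perp}\cap\text{Im}(\mathbb{O})$, exhibiting each $L_e$ as an even product and hence an element of $\rho(\text{Spin}(7))$; therefore $G=\rho(\text{Spin}(7))\cong\text{Spin}(7)$, and its Lie algebra $\mathfrak{spin}(7)\cong\mathfrak{so}(7)$ acts on $\mathbb{O}$ by the spinor representation, as asserted. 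The step I expect to be the main obstacle is precisely this identification — in particular the Clifford-algebra bookkeeping that places each generator inside $\text{Spin}(7)\subset\text{Cl}(7)^0$ modulo $\ker\rho$, which is routine but needs care; a reader content to take the classical facts on faith may attribute the whole lemma to Harvey and Lawson.
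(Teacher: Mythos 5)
The paper states this lemma as classical background and gives no proof of its own, so there is nothing of the paper's to compare your argument against line by line; what you supply is in substance the standard Harvey--Lawson treatment, and it is correct. Part (1) --- $\phi(1)=1$, the algebraic characterisation $\text{Im}(\mathbb{O})=\{x\in\mathbb{O}\,:\,x^2\in\mathbb{R}_{\le 0}\cdot 1\}$, orthogonality by polarisation, faithfulness, and irreducibility via transitivity of $G_2$ on $S^6$ --- is fine. In part (2) your insistence on unit \emph{imaginary} octonions is a genuine and worthwhile sharpening of the statement as printed: since $L$ is linear in its subscript, $L_{\cos\theta\,1+\sin\theta\,e}=\exp(\theta L_e)$, so the group generated by left multiplications by \emph{all} unit octonions is connected with Lie algebra containing both $\text{span}\{L_e\}$ and the commutators $[L_e,L_f]=2\rho(ef)$, hence strictly larger than the maximal subalgebra $\rho(\mathfrak{spin}(7))\subseteq\mathfrak{so}(8)$, and the group is all of $SO(8)$ (consistent with $SO(8)/\text{Spin}(7)\cong S^7$); the lemma must therefore be read with generators in $S^6\subset\text{Im}(\mathbb{O})$, exactly as you do. For those generators your Clifford-theoretic identification --- the relations $L_eL_f+L_fL_e=-2\langle e,f\rangle\,\text{id}$ via alternativity and the antisymmetric associator, the isomorphism of the even Clifford algebra with $\text{End}_{\mathbb{R}}(\mathbb{R}^8)$, the centrality of the volume element so that each single $L_e$ is, up to sign, an even product, and $-1\in\text{Spin}(7)$ --- is complete, and faithfulness of the $8$-dimensional real spin representation yields both the isomorphism of the generated group with $\text{Spin}(7)$ and the identification of the induced Lie algebra action with the spinor representation of $\mathfrak{so}(7)$. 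One small caveat: your parenthetical reason that the $L_e$ span a complement of $\mathfrak{spin}(7)$ in $\mathfrak{so}(8)$ (namely that the commutators already fill out $\mathfrak{spin}(7)$) does not by itself give $\text{span}\{L_e\}\cap\rho(\mathfrak{spin}(7))=\{0\}$; to see this, note for instance that by centrality of the volume element $L_e=\pm\rho(c)$ for a degree-six element $c$ of the even algebra, that $\rho$ is injective on the even algebra, and that $c\notin\mathfrak{spin}(7)$ --- or invoke that $\text{span}\{L_e\}$ is the $7$-dimensional vector representation of $\text{Spin}(7)$ while the adjoint representation is irreducible of dimension $21$. Since that remark only serves to explain why the restriction to imaginary octonions is necessary, it does not affect the proof of the lemma itself.
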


We rewrite the canonical basis $(1,i,j,k,\epsilon,i\epsilon,j\epsilon,k\epsilon)$ of $\mathbb{O}$ as $(e_0,\ldots,e_8)$.
Furthermore, we define the one-forms $dx^i$ by $dx^{i}(e_j):=\delta^i_j$ and $dx^{i_1\ldots i_k}$ as
$dx^{i_1}\wedge\ldots\wedge dx^{i_k}$. The three-form

\begin{equation}
\label{omega}
\omega:=dx^{123}+dx^{145}-dx^{167}+dx^{246}+dx^{257}+dx^{347}-dx^{356}
\end{equation}

is invariant under $G_2$ and the four-form

\begin{equation}
\label{Omega}
\begin{split}
\Omega & :=
dx^{0123}+dx^{0145}-dx^{0167}+dx^{0246}+dx^{0257}+dx^{0347}-dx^{0356}\\
&\quad
-dx^{1247}+dx^{1256}+dx^{1346}+dx^{1357}-dx^{2345}+dx^{2367}
+dx^{4567}\:.\\
\end{split}
\end{equation}

is invariant under $\text{Spin}(7)$. Up to constant multiples, the above forms are the only elements of $\bigwedge^3
\text{Im}(\mathbb{O})^{\ast}$ ($\bigwedge^4 \mathbb{O}^{\ast}$) with these properties. $\Omega$ and $\omega$ are related by the
formula

\begin{equation}
\Omega=\ast\omega+dx^0\wedge\omega\:,
\end{equation}

where $\ast$ is the Hodge star with respect to the canonical metric on $\text{Im}(\mathbb{O})$. We are now able to define the
notion of a $G_2$- ($\text{Spin}(7)$-)structure:

\begin{Def}
\begin{enumerate}
    \item Let $M$ be a seven-dimensional manifold and $\omega$ be a three-form on $M$. We assume that for any $p\in M$ there
    exists an open neighborhood $U$ of $p$ and a local frame $(X_i)_{1\leq i\leq 7}$ on $U$ with the following property:
    $\omega$ has with respect to $(X_i)_{1\leq i\leq 7}$ the same coefficients as the three-form (\ref{omega}) with respect to
    $(e_i)_{1\leq i\leq 7}$. In this situation, $\omega$ is called a \emph{$G_2$-structure} on $M$.
    \item Let $M$ be an eight-dimensional manifold and $\Omega$ be a four-form on $M$. We assume that for any $p\in M$ there
    exists an open neighborhood $U$ of $p$ and a local frame $(X_i)_{0\leq i\leq 7}$ on $U$ with the following property:
    $\Omega$ has with respect to $(X_i)_{0\leq i\leq 7}$ the same coefficients as the four-form (\ref{Omega}) with respect to
    $(e_i)_{0\leq i\leq 7}$. In this situation, $\Omega$ is called a \emph{$\text{Spin}(7)$-structure} on $M$.
\end{enumerate}
\end{Def}

Alternatively, we could have defined a $G_2$-
($\text{Spin}(7)$-)structure as a certain principal bundle with
structure group $G_2$ ($\text{Spin}(7)$). On any manifold with a
$G_2$- ($\text{Spin}(7)$-)structure there exist an
\emph{associated metric and orientation}, which depend on $\omega$
($\Omega$) only. The $G_2$- ($\text{Spin}(7)$)-structures can be
divided according to their intrinsic torsion into $16$ classes
(cf. Fern\'andez, Gray \cite{Fer} for further details). For our
considerations, we only need the following of those classes:

\begin{Def}
\begin{enumerate}
    \item A $G_2$-structure $\omega$ is called \emph{nearly parallel} if there exists a
    $\lambda\in\mathbb{R}\setminus\{0\}$ such that $d\omega=\lambda\ast\omega$. "$\ast$" denotes the Hodge star operator with
    respect to the associated metric and orientation.
    \item A $G_2$-structure $\omega$ is called \emph{cosymplectic} if $d\ast\omega=0$.
    \item A $\text{Spin}(7)$-structure $\Omega$ is called \emph{parallel} if $\nabla^g\Omega=0$, where $g$
    is the associated metric and $\nabla^g$ is the Levi-Civita connection of $g$. In this situation, we call $(M,\Omega)$ a
    \emph{$\text{Spin}(7)$-manifold.}
\end{enumerate}
\end{Def}

If $\Omega$ is a $\text{Spin}(7)$-structure, $\nabla^g\Omega=0$ is equivalent to $d\Omega=0$ (cf. Fern\'andez \cite{Fer2}).
Since it is more convenient to work with the latter equation in practical situations, we will often use it instead of
$\nabla^g\Omega=0$. $\text{Spin}(7)$-manifolds have the following interesting properties:

\begin{Th} (Cf. Bonan \cite{Bon} and Wang \cite{Wang}.)
\begin{enumerate}
    \item The associated metric is Ricci-flat.
    \item The holonomy of the metric is contained in $\text{Spin}(7)$.
\end{enumerate}
\end{Th}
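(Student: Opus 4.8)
The plan is to read off assertion (2) directly from the holonomy principle and then derive Ricci-flatness from the existence of a parallel spinor; an alternative route to (1), essentially Bonan's, is purely representation-theoretic. For (2): as noted after the definition, a $\text{Spin}(7)$-structure $\Omega$ is the same datum as a reduction $P$ of the orthonormal frame bundle of $(M,g)$ to the subgroup $\text{Spin}(7)\subset SO(8)$, namely the stabilizer of the model form $\Omega_0$ of (\ref{Omega}), realised as the group of left octonion multiplications. The hypothesis $\nabla^g\Omega=0$, which for a $\text{Spin}(7)$-structure is equivalent to $d\Omega=0$ by Fern\'andez \cite{Fer2}, says exactly that the Levi-Civita connection of $g$ restricts to a principal connection on $P$. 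Hence parallel transport carries frames of $P$ to frames of $P$, and therefore, read off in any such frame, the holonomy group $\text{Hol}_p(g)\subset SO(T_pM)$ is contained in $\text{Spin}(7)$; equivalently, $\Omega$ is parallel, every holonomy transformation fixes $\Omega_p$, and the stabilizer of $\Omega_p$ in $SO(T_pM)$ is conjugate to $\text{Spin}(7)$.

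For (1): since $\text{Spin}(7)$ is simply connected, the inclusion $\text{Spin}(7)\hookrightarrow SO(8)$ lifts to $\text{Spin}(7)\hookrightarrow\text{Spin}(8)$, so $P$ induces a spin structure on $M$ together with a reduction of the spinor bundle to $\text{Spin}(7)$. Now $\text{Spin}(7)$ is, up to conjugacy in $\text{Spin}(8)$, also the stabilizer of a nonzero spinor --- indeed $\text{Spin}(8)$ acts transitively on the unit sphere of a half-spin representation with stabilizer $\text{Spin}(7)$ --- so one of the two half-spin representations of $\text{Spin}(8)$ restricts to our $\text{Spin}(7)$ with a trivial one-dimensional summand. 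Consequently $M$ carries a $\nabla^g$-parallel spinor $\psi$ which, being parallel and nonzero, is nowhere vanishing. From $\nabla^g\psi=0$ one gets $\mathcal{R}(X,Y)\psi=0$ for the spinor curvature $\mathcal{R}(X,Y)=\tfrac14\sum_{i,j}\langle R(X,Y)e_i,e_j\rangle\,e_i\!\cdot\!e_j$; Clifford-contracting over $X$ and using the first Bianchi identity yields the standard identity $\sum_i e_i\!\cdot\!\mathcal{R}(e_i,X)\psi=c\,\text{Ric}(X)\!\cdot\!\psi$ with a nonzero universal constant $c$. Since the left-hand side vanishes and $v\!\cdot\!v\!\cdot\!\psi=-|v|^2\psi$, Clifford multiplication by $\text{Ric}(X)$ annihilates the nonzero spinor $\psi$ only if $\text{Ric}(X)=0$; hence $\text{Ric}\equiv0$, which is (1).

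Alternatively --- and closer to Bonan's original computation --- (1) follows from the representation theory of the curvature operator alone: the space $\mathcal{K}(\mathfrak{spin}(7))\subset S^2\mathfrak{spin}(7)$ of formal curvature tensors annihilated by the Bianchi map is an irreducible $\text{Spin}(7)$-module, and the Ricci contraction is a $\text{Spin}(7)$-equivariant map $\mathcal{K}(\mathfrak{spin}(7))\to S^2\mathbb{R}^8$, which must vanish because $\mathcal{K}(\mathfrak{spin}(7))$ shares no irreducible constituent with $S^2\mathbb{R}^8$. I expect the real work to be representation-theoretic throughout: for (2), identifying the $SO(8)$-stabilizer of $\Omega_0$ with $\text{Spin}(7)$, so that the reduction $P$ genuinely has structure group $\text{Spin}(7)$; and for (1), pinning down the branching of the half-spin representations of $\text{Spin}(8)$ under this triality-twisted $\text{Spin}(7)$ to exhibit the trivial summand --- or, in the alternative approach, decomposing $S^2\mathfrak{spin}(7)$ and checking the Bianchi kernel against $S^2\mathbb{R}^8$. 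The holonomy principle and the Clifford contraction identity are routine by comparison.
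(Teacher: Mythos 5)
Your proposal is correct, but note that the paper itself contains no proof of this theorem: it is stated with a reference to Bonan and Wang, and your argument is essentially the standard proof from exactly those sources. Part (2) is the holonomy principle applied to the parallel $4$-form (holonomy fixes $\Omega_p$, whose stabilizer in $SO(T_pM)$ is $\text{Spin}(7)$), and part (1) is Wang's argument: the reduction to the simply connected group $\text{Spin}(7)$ lifts to $\text{Spin}(8)$, the relevant half-spin representation restricted to $\text{Spin}(7)$ contains a trivial summand, so there is a parallel spinor, and the Clifford contraction of the curvature identity $\mathcal{R}(X,Y)\psi=0$ forces $\text{Ric}\equiv 0$; your alternative via the vanishing of the equivariant Ricci contraction on the curvature space $\mathcal{K}(\mathfrak{spin}(7))$ is the Bonan/Berger-style route and is likewise sound.
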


\begin{Rem}
Since $SU(4)$ can be embedded into $\text{Spin}(7)$, manifolds whose holonomy is contained in $SU(4)$ are a special case of
$\text{Spin}(7)$-manifolds. The holonomy of a metric on an eight-dimensional manifold is a subgroup of $SU(4)$ if  and only if
it is Ricci-flat and K\"ahler.
\end{Rem}

\section{Geometrical structures of cohomogeneity one}
\label{CohomOneSec}

In this section, we motivate why it is promising to assume that the metric is invariant under a cohomogeneity one action and
give a short introduction into the issue of Spin($7$)-manifolds of cohomogeneity one.

Since there are certain non-linear restrictions on $\Omega$, $d\Omega=0$ should be viewed as a non-linear partial
differential equation. This makes it extraordinary difficult to construct examples of $\text{Spin}(7)$-manifolds. The first
local examples have been constructed by Bryant \cite{Br}, the first complete ones by Bryant and Salamon \cite{BrS}, and the
existence of compact manifolds with holonomy $\text{Spin}(7)$ has been proven by Joyce \cite{Joy}. If $\Omega$ is preserved by
a large group acting on the manifold, the equation $d\Omega=0$ simplifies. The optimal case would be where the group action is
transitive. Unfortunately, there are no interesting homogeneous examples:

\begin{Th}
(See Alekseevskii and Kimelfeld \cite{Alki}.) Any homogeneous Ricci-flat metric is necessary flat.
\end{Th}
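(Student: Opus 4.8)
The plan is to work on a homogeneous Riemannian manifold $M = G/H$ with $G$-invariant metric $g$, and exploit the interplay between the Ricci curvature, the scalar curvature, and the volume functional under the natural scaling and variational structure on the space of invariant metrics. First I would observe that homogeneity forces the scalar curvature $s$ to be a constant, and that Ricci-flatness gives $s = 0$; the goal is to upgrade "$\mathrm{Ric} = 0$" to "$R = 0$". The key tool is a Bochner-type / integral argument adapted to the homogeneous (possibly non-compact) setting: on a homogeneous space one has a well-defined notion of the ``average'' of curvature quantities, so one can integrate against the invariant volume form locally and still extract global identities because the integrand is constant.

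The central step I would carry out is to use the second variation of the total scalar curvature, or more elementarily the following classical fact (due to Bochner in the compact case, extended by Alekseevskii--Kimelfeld using homogeneity): a homogeneous manifold with $\mathrm{Ric} \le 0$ that admits a nonzero parallel-free Killing-type averaging must actually be a flat homogeneous space, i.e.\ a quotient of Euclidean space. Concretely, I would take Killing fields $X$ generated by $\mathfrak g$; the Bochner formula gives $\tfrac12 \Delta |X|^2 = |\nabla X|^2 - \mathrm{Ric}(X,X)$. Since $\mathrm{Ric} = 0$ this reads $\tfrac12 \Delta |X|^2 = |\nabla X|^2 \ge 0$, so each $|X|^2$ is subharmonic; but on a homogeneous space $|X|^2$ attains its maximum (the isometry group acts transitively, so the sup is realized), forcing $|X|^2$ to be constant and hence $\nabla X = 0$. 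Thus every Killing field coming from $\mathfrak g$ is parallel. Since $G$ acts transitively, these Killing fields span the tangent space at each point, so $M$ admits a global parallel framing, i.e.\ the holonomy is trivial and $g$ is flat.

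The main obstacle is the non-compactness: the maximum principle argument for $|X|^2$ requires knowing that the supremum of $|X|^2$ is attained, which is immediate by transitivity of the isometry group (the function $p \mapsto |X_p|^2$ descends to a constant-orbit-type function and its range is the image of a single $G$-orbit value, hence a single point — one must phrase this carefully: $|X|^2$ need not be $G$-invariant since $X$ is a fixed Killing field, but for each $p$ there is $\varphi \in G$ with $\varphi(p)$ arbitrary, and $|d\varphi(X)|^2 = |X|^2 \circ \varphi^{-1}$, and the family $\{d\varphi(X)\}$ spans $\mathfrak g$-Killing fields). The cleanest fix, which I would adopt, is to pass to the function $f = \sum_i |X_i|^2$ over a basis $X_1,\dots,X_n$ of the Killing fields from $\mathfrak g$: this $f$ \emph{is} $G$-invariant, hence constant, and $0 = \tfrac12\Delta f = \sum_i |\nabla X_i|^2 - \sum_i \mathrm{Ric}(X_i,X_i) = \sum_i |\nabla X_i|^2$, so every $\nabla X_i = 0$. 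This sidesteps any completeness or attainment subtlety entirely, and the conclusion (trivial holonomy, hence $R=0$) follows as above.
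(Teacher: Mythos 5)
Your Bochner argument is sound in the compact case, but the theorem you are asked to prove is for arbitrary homogeneous Riemannian manifolds, where the transitive group $G$ (e.g.\ a solvable group acting simply transitively) may be non-compact, and it is exactly there that your proposal has a genuine gap. The ``cleanest fix'' is false: $f=\sum_i|X_i|^2$ is \emph{not} $G$-invariant in general. If $X_Y$ denotes the Killing field generated by $Y\in\mathfrak g$, then $(L_g)_*X_Y=X_{\mathrm{Ad}(g)Y}$, so $f\circ L_g(p)=\sum_i|X_{\mathrm{Ad}(g^{-1})Y_i}(p)|^2$; this agrees with $f(p)$ for all $p$ only if $\mathrm{Ad}(g^{-1})$ is orthogonal for the inner product on $\mathfrak g$ that makes $(Y_i)$ orthonormal. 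Hence the argument needs an $\mathrm{Ad}(G)$-invariant inner product, i.e.\ essentially compactness of (the adjoint image of) $G$. A concrete counterexample to the invariance claim: on the hyperbolic plane, homogeneous under the $ax+b$ group, the Killing fields $\partial_x$ and $x\partial_x+y\partial_y$ give $|\partial_x|^2+|x\partial_x+y\partial_y|^2=(1+x^2+y^2)/y^2$, which is far from constant. Your first variant (subharmonicity of a single $|X|^2$ plus a maximum principle) fails for the same reason: on a non-compact homogeneous space $|X|^2$ is not $G$-invariant and need not attain its supremum, so no conclusion can be drawn from subharmonicity.

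The missing ingredient is a reduction to the compact case, and this is precisely what the cited Alekseevskii--Kimelfeld argument (and the standard modern proof) supplies: since $\mathrm{Ric}\geq 0$, the Cheeger--Gromoll splitting theorem applied to a homogeneous (hence complete) manifold lets one split off the Euclidean de Rham factor, $M\cong\mathbb R^k\times N$, and one shows that the remaining homogeneous factor $N$, containing no lines and having $\mathrm{Ric}\geq 0$, is compact. On the compact factor your Bochner computation (integrate $\tfrac12\Delta|X|^2=|\nabla X|^2-\mathrm{Ric}(X,X)=|\nabla X|^2$ over $N$, or use the maximum principle) does show that all Killing fields are parallel, giving a parallel frame and hence flatness, so $N$ is a flat torus and $M$ is flat. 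Note also that the paper itself does not prove this statement but cites it; as written, your proposal establishes only the special case in which $M$ (or the transitive group) is compact, and the non-compact case is where the real work lies.
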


The next case which is natural to consider is where the group acts by cohomogeneity one:

\begin{Def}
\begin{enumerate}
    \item Let $(M,g)$ be a connected Riemannian manifold with an isometric action by a Lie group $G$. An orbit $\mathcal{O}$
    of this action is called a \textit{principal orbit} if there is an open subset $U$ of $M$ with the following properties:
    $\mathcal{O}\subseteq U$ and $U$ is $G$-equivariantly diffeomorphic to $\mathcal{O}\times V$, where
    $V\subseteq\mathbb{R}^n$ is an open set.
    \item In the above situation, $\dim{V}$ (or equivalently $\dim{M}-\dim{\mathcal{O}}$) is called the \emph{cohomogeneity of
    the $G$-action on $M$}.
    \item A $\text{Spin}(7)$-manifold $(M,\Omega)$ is called \emph{of cohomogeneity one} if there exists a cohomogeneity one
    action which preserves $\Omega$ (and thus the associated metric).
\end{enumerate}
\end{Def}

Since any two principal orbits are $G$-equivariantly diffeomorphic, the cohomogeneity is a well-defined number. Any principal
orbit can be identified with a coset space $G/H$. Moreover, any non-principal orbit can be identified with a coset space $G/K$
and after conjugation we can assume that $H\subseteq K \subseteq G$. The group $K$ cannot be chosen arbitrarily:

\begin{Th} \label{Most} (See Mostert \cite{Mostert}.) Let $(M,g)$ be a Riemannian manifold with an isometric cohomogeneity one
action by a Lie group $G$. Furthermore, let all principal orbits be $G$-equivariantly diffeomorphic to $G/H$ with $H \subseteq
G$. We assume that there exists a non-principal orbit which we identify with $G/K$ where $H\subseteq K\subseteq G$. In this
situation, $K/H$ is diffeomorphic to a sphere.
\end{Th}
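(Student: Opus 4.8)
The plan is to reduce everything to the slice theorem. Fix a point $p$ in the non-principal orbit $G\cdot p$, which we identify with $G/K$, and let $V:=\nu_p(G\cdot p)$ be the normal space of the orbit at $p$, equipped with the $K$-invariant inner product induced by $g$. The isotropy group $K$ is compact (the action is isometric, hence proper), so the slice theorem applies: a suitable $G$-invariant open neighbourhood $U$ of $G/K$ is $G$-equivariantly diffeomorphic to the associated bundle $G\times_K V$, with $G/K$ corresponding to the zero section, and the $K$-action on $V$ (the slice representation) is orthogonal.

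First I would translate the orbit structure of $M$ into this linear picture. The $G$-orbits in $G\times_K V$ are in bijection with the $K$-orbits in $V$, the orbit through $[g,v]$ being $G$-equivariantly diffeomorphic to $G/K_v$ where $K_v:=\{k\in K\mid k\cdot v=v\}$; hence $U/G$ is homeomorphic to $V/K$. Because the $G$-action on $M$ has cohomogeneity one, $U/G$ — and therefore $V/K$ — is one-dimensional.

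Next I would use orthogonality. The norm function on $V$ is $K$-invariant and descends to $V/K$, and $V\setminus\{0\}$ is $K$-equivariantly diffeomorphic to $S\times(0,\infty)$ with $K$ acting trivially on the second factor, $S\subset V$ denoting the unit sphere; hence $V/K\cong\big(S/K\big)\times(0,\infty)$ away from the image of $0$, so $\dim(S/K)=0$. When $\dim V\ge 2$ the sphere $S$ is connected, so a full-dimensional $K$-orbit in $S$ — which a principal one must be, since $\dim(S/K)=0$ — is open and closed in $S$, hence equal to $S$, so $K$ acts transitively on $S$. When $\dim V=1$ the group $K$ acts on $\mathbb R$ through $\{\pm 1\}$, and the subgroup acting trivially cannot be all of $K$, for otherwise $K_v=K$ for every $v\neq 0$ and $G/K$ would be a principal orbit; thus this subgroup has index two and $K$ acts transitively on $S=S^{0}$. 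In all cases $K$ acts transitively on $S=S^{\dim V-1}$, so every orbit $K\cdot v$ with $v\neq 0$ is a sphere centred at the origin and all the isotropy groups $K_v$, $v\neq0$, are conjugate in $K$; consequently every $G$-orbit in $U$ other than $G/K$ has the same type.

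Finally I would identify $H$. For $v\in S$ the orbit $G\times_K(K\cdot v)\cong G/K_v$ is, by the previous paragraph, a principal orbit, hence $G$-equivariantly diffeomorphic to $G/H$; after conjugating $H$ inside $G$ we may assume $K_v=H$. Therefore
\[
K/H=K/K_v\cong K\cdot v=S=S^{\dim V-1},
\]
which is the claim. The step requiring the most care is the passage from $\dim(V/K)=1$ to transitivity of $K$ on the unit sphere, in particular the handling of the low-dimensional and possibly disconnected cases; the remainder is a formal consequence of the slice theorem.
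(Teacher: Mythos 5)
The paper does not prove this statement; it simply cites Mostert, so there is no in-text argument of the paper's own to compare against. Your slice-theorem proof is the standard modern argument for this fact and is correct in all essentials, which also makes it a genuinely different route from Mostert's 1957 original (which predates the tube theorem in its modern form).

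Two small points should be cleaned up. First, the inference \emph{isometric, hence proper} is not a valid implication: an irrational rotation of $S^1$ by a $\mathbb{Z}$-action is isometric but not proper, and the isotropy group of a merely isometric action need not be compact. What you actually need is that $G$ is compact (which the paper assumes a few lines after stating this theorem) or, more generally, that the action is proper; either hypothesis gives compactness of $K$ and applicability of the tube/slice theorem. Second, the phrase \emph{after conjugating $H$ inside $G$ we may assume $K_v=H$} is slightly off, because what you need for $K/H\cong K/K_v$ is that $H$ be conjugate to $K_v$ \emph{inside $K$}, not merely inside $G$. This does follow from what you have established: the transitivity of $K$ on the unit sphere $S\subset V$ shows that the isotropy groups $K_v$, $v\in S$, form a single $K$-conjugacy class, and any principal isotropy group $H$ with $H\subseteq K$ (the theorem's hypothesis, which one realizes by following a normal geodesic out of the singular orbit) fixes $p$, hence meets the slice, hence equals some $K_v$ by maximality of principal isotropy among nearby orbit types. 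With these two points made explicit, the proof is complete.
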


\begin{Rem}
\begin{enumerate}
    \item If $K/H$ is a quotient of a sphere by a finite group, $M$ is not a manifold, but still an orbifold. Since we are
    also interested in $\text{Spin}(7)$-manifolds with "nice" singularities, we will include such spaces into our
    considerations, too. On the following pages, we will assume that $M$ is a manifold. Nevertheless, we can easily modify our
    statements such that they are valid in the orbifold-case, too.
    \item Since the volume of $K/H$ shrinks to zero as we approach the singular orbit, we will refer to $K/H$ as the
    \emph{collapsing sphere}.
\end{enumerate}
\end{Rem}

For the rest of this section, let $(M,\Omega)$ be a $\text{Spin}(7)$-manifold of cohomogeneity one. $G$, $H$, and $K$ shall
denote the same groups as above. We will assume from now on that $G$ is compact. We denote the Lie algebras of $G$, $H$, and
$K$ by $\mathfrak{g}$, $\mathfrak{h}$, and $\mathfrak{k}$. Beside Theorem \ref{Most} there are further restrictions on the
shape of $M$:

It is a well-known fact (cf. Mostert \cite{Mostert}) that $M/G$ is
homeomorphic to $S^1$, $\mathbb{R}$, $[0,1]$, or $[0,\infty)$. In
the first case, $M$ would have an infinite fundamental group and
the holonomy thus would be a subgroup of $G_2$. We will therefore
exclude this case. In the second case, $M$ would contain a line.
It would follow from the Cheeger-Gromoll splitting theorem that
$M$ is a Riemannian product of $\mathbb{R}$ and another space.
This case we will not consider, too. In the third case, $M$ would
be compact. Since its Ricci curvature would be non-positive, all
Killing vector fields would be parallel and commute with each
other. The principal orbit thus would be a flat torus. Since in
that situation, $(M,g)$ would be flat, too, we can assume that
$M/G$ is $[0,\infty)$. The point $0$ corresponds to a
non-principal orbit and all other orbits are principal ones.

If $K/H=S^0=\mathbb{Z}_2$, the non-principal orbit is called \emph{exceptional}. In that case, $M$ would be twofold covered by
a space $\widetilde{M}$ with $\widetilde{M}/G=\mathbb{R}$. We will therefore restrict ourselves to the case where there is
exactly one non-principal orbit with $\dim{K} - \dim{H} > 1$. An orbit which satisfies this requirement is called a
\emph{singular orbit}.

The pull-back of the inclusion $\imath$ of a principal orbit into $M$ yields a four-form $\imath^\ast(\Omega)$ on any
principal orbit. Its Hodge-dual with respect to the restricted metric and a fixed orientation is a three-form which can be
proven to be a $G_2$-structure $\omega$. From the equation $d\Omega=0$ it follows that $d \ast \omega = 0$. Conversely, any
$G$-invariant cosymplectic $G_2$-structure on $G/H$ can be extended to a parallel $\text{Spin}(7)$-structure on $G/H \times
(-\epsilon,\epsilon)$:

\begin{Th}
Let $G/H$ be a seven-dimensional homogeneous space which carries a
$G$-invariant cosymplectic $G_2$-structure $\widetilde{\omega}$.
Then there exists an $\epsilon>0$ and a one-parameter family
$(\omega_t)_{t\in(-\epsilon,\epsilon)}$ of $G$-invariant
three-forms on $G/H$ such that the initial value problem

\begin{eqnarray}
\label{Evol} \frac{\partial}{\partial t}\ast\omega_t & = &
d_{\scriptscriptstyle G/H}\omega_t\\ \label{InitialEvol} \omega_0
& = & \widetilde{\omega}
\end{eqnarray}

has a unique solution on $G/H\times(-\epsilon,\epsilon)$ with the following properties:

\begin{enumerate}
    \item $\omega_t$ is a $G_2$-structure on $G/H\times\{t\}$.
    \item $d_{\scriptscriptstyle G/H}\ast\omega_t=0$.
    \item $\ast\omega_t$ is in the same cohomology class in $H^4(M,\mathbb{R})$
    as $\ast\widetilde{\omega}$.
\end{enumerate}

In the above formula, $\tfrac{\partial}{\partial t}$ denotes the Lie derivative in $t$-direction. The index $G/H$ of $d$
emphasizes that we consider the exterior derivative on $G/H\times\{t\}$ instead of $G/H\times (-\epsilon,\epsilon)$. In the
above situation, the four-form $\Omega:=\ast\omega+dt \wedge\omega$ is a $G$-invariant parallel $\text{Spin}(7)$-structure on
$G/H \times(-\epsilon,\epsilon)$.

Conversely, let $\Omega$ be a parallel $\text{Spin}(7)$-structure preserved by a cohomogeneity one action of a compact Lie
group $G$. The isotropy group of the $G$-action on the principal orbit we denote by $H$. We identify the union of all
principal orbits $G$-equivariantly with $G/H\times I$, where the interval $I$ is parameterized by arclength. In this
situation, the $G_2$-structures on the principal orbits are cosymplectic and satisfy equation (\ref{Evol}).
\end{Th}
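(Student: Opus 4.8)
The plan is to prove the two halves of the statement separately, beginning with the converse direction, which is a direct computation. Let $\Omega$ be a parallel $\text{Spin}(7)$-structure preserved by a cohomogeneity one action of a compact group $G$, with principal isotropy $H$, and identify the union of the principal orbits with $G/H\times I$, where $I$ is parameterized by arclength $t$. Restricting $\Omega$ to the slice $G/H\times\{t\}$ and dualizing yields a $G$-invariant $G_2$-structure $\omega_t$ whose associated metric is the restriction of $g$; applying the pointwise identity $\Omega=\ast\omega+dx^0\wedge\omega$ in an adapted local frame (with $x^0$ the coordinate on $I$) shows $\Omega=\ast\omega_t+dt\wedge\omega_t$ on $G/H\times I$, where $\ast$ is the Hodge star of $\omega_t$ on the slice. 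Decomposing $d=d_{\scriptscriptstyle G/H}+dt\wedge\tfrac{\partial}{\partial t}$ and expanding gives $0=d\Omega=d_{\scriptscriptstyle G/H}\ast\omega_t+dt\wedge\big(\tfrac{\partial}{\partial t}\ast\omega_t-d_{\scriptscriptstyle G/H}\omega_t\big)$; since the first summand contains no $dt$ while the second does, both vanish, whence $d_{\scriptscriptstyle G/H}\ast\omega_t=0$ (cosymplecticity) and the evolution equation (\ref{Evol}).

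For the existence statement I would use that $G/H$ is homogeneous, so the vector space $V^3$ of $G$-invariant three-forms on $G/H$ is finite-dimensional and the subset $U^3\subseteq V^3$ of $G_2$-structures is open (being a $G_2$-structure is an open condition at a point, and by $G$-invariance one point suffices). The map $\sigma:\omega\mapsto\ast_\omega\omega$ carries $U^3$ into the finite-dimensional space $V^4$ of $G$-invariant four-forms, and by a standard fact about $G_2$ four-forms (such a form determines the associated metric, hence, via $\ast\ast=\mathrm{id}$ on three-forms in dimension seven, the dual three-form) $\sigma$ is a real-analytic diffeomorphism of $U^3$ onto an open subset of $V^4$. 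Writing $\rho_t:=\ast\omega_t$, equation (\ref{Evol}) becomes the genuine ordinary differential equation $\tfrac{\partial}{\partial t}\rho_t=d_{\scriptscriptstyle G/H}\,\sigma^{-1}(\rho_t)$ on $\sigma(U^3)$, with initial value $\rho_0=\ast\widetilde{\omega}$. The right-hand side is real-analytic, so the Picard--Lindel\"of theorem produces $\epsilon>0$ and a unique solution $\rho_t$ on $(-\epsilon,\epsilon)$; set $\omega_t:=\sigma^{-1}(\rho_t)$. (This is Hitchin's evolution; the homogeneity hypothesis replaces the Cauchy--Kovalevskaya argument needed in the general case by elementary ODE theory and supplies uniqueness automatically.)

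It remains to verify the three listed properties and to conclude. Property (1) holds by construction as long as $\rho_t$ stays in $\sigma(U^3)$, which it does after possibly shrinking $\epsilon$. For (2): $\rho_0=\ast\widetilde{\omega}$ is closed by the cosymplectic hypothesis, and $\tfrac{\partial}{\partial t}d_{\scriptscriptstyle G/H}\rho_t=d_{\scriptscriptstyle G/H}\tfrac{\partial}{\partial t}\rho_t=d_{\scriptscriptstyle G/H}d_{\scriptscriptstyle G/H}\omega_t=0$, so $d_{\scriptscriptstyle G/H}\rho_t\equiv d_{\scriptscriptstyle G/H}\rho_0=0$. For (3): $\rho_t-\rho_0=\int_0^t d_{\scriptscriptstyle G/H}\omega_s\,ds=d_{\scriptscriptstyle G/H}\!\int_0^t\omega_s\,ds$ is exact, hence $\ast\omega_t$ and $\ast\widetilde{\omega}$ represent the same class. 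Finally, by the same pointwise identity used above, $\Omega:=\ast\omega_t+dt\wedge\omega_t$ is a $G$-invariant $\text{Spin}(7)$-structure on $G/H\times(-\epsilon,\epsilon)$, with associated metric the product of the metric of $\omega_t$ and $dt^2$; running the computation of the converse direction backwards gives $d\Omega=d_{\scriptscriptstyle G/H}\rho_t+dt\wedge\big(\tfrac{\partial}{\partial t}\rho_t-d_{\scriptscriptstyle G/H}\omega_t\big)=0$ by (2) and (\ref{Evol}), and since $d\Omega=0$ is equivalent to $\nabla^g\Omega=0$, the structure $\Omega$ is parallel.

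The one genuinely structural step is the reduction of (\ref{Evol}) to an ordinary differential equation, i.e.\ the claim that $\sigma$ restricts to a diffeomorphism of $U^3$ onto an open set with real-analytic inverse (equivalently, that its differential is invertible at every $G_2$-structure); this is where the linear algebra of $G_2$ four-forms enters, and once it is granted, everything else above is routine bookkeeping with the evolution equation and with $d\Omega=0$.
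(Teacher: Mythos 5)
Your proof is correct, but note that the paper itself does not prove this theorem: the Remark immediately following it attributes the result to Hitchin \cite{Hitch}, who establishes it for arbitrary compact seven-manifolds carrying a cosymplectic $G_2$-structure. Your argument is a specialization of Hitchin's to the homogeneous case, and the specialization is genuinely illuminating: you observe that since the $G$-invariant $k$-forms on $G/H$ form a finite-dimensional space, inverting the duality $\sigma:\omega\mapsto\ast_\omega\omega$ turns equation (\ref{Evol}) into an ordinary differential equation on a finite-dimensional manifold, so Picard--Lindel\"of replaces the Cauchy--Kovalevskaya step Hitchin needs in the general case and delivers the uniqueness assertion for free. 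The converse direction via the decomposition $d=d_{\scriptscriptstyle G/H}+dt\wedge\tfrac{\partial}{\partial t}$, the verification of properties (1)--(3), and the closing identification of $\Omega=\ast\omega_t+dt\wedge\omega_t$ as parallel via $d\Omega=0\Leftrightarrow\nabla^g\Omega=0$ (recalled in the paper from Fern\'andez \cite{Fer2}) are all correct.

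One small imprecision in the step you yourself flag as the key one: a $G_2$ four-form $\rho$ determines the associated metric but \emph{not} the orientation, since $-\mathrm{Id}\in GL(7,\mathbb{R})$ fixes $\rho$ (degree four) yet has determinant $(-1)^7=-1$. Consequently $\ast_\rho\rho$ is well-defined only up to an overall sign, and the inverse of $\sigma$ must select the orientation for which $\ast_\rho\rho$ is again a $G_2$ three-form; this selection is unique precisely because $-\mathrm{Id}$ does \emph{not} fix a $G_2$ three-form. With that caveat your bijectivity claim holds and the argument stands, but your parenthetical explanation (``such a form determines the associated metric, hence, via $\ast\ast=\mathrm{id}$ on three-forms in dimension seven, the dual three-form'') glides past exactly this orientation ambiguity.
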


\begin{Rem}
\begin{enumerate}
    \item The above theorem has been proven by Hitchin \cite{Hitch} in a more general context: In \cite{Hitch}, $G/H$ is
    replaced by a seven-dimensional compact manifold which carries a cosymplectic $G_2$-struc\-ture but is not necessarily
    homogeneous.
    \item If $\omega$ is nearly parallel, the maximal solution of (\ref{Evol}) describes a cone over $G/H$. More precisely,
    the space of cohomogeneity one is isometric to a cone over $G/H$ which carries the metric associated to $\omega$.
    \item Since $(M,\Omega)$ is of cohomogeneity one, the equation $\tfrac{\partial}{\partial t}\ast\omega_t = d\omega_t$ is
    equivalent to a system of ordinary differential equations and thus easier to handle than the equation $d\Omega=0$ in the
    general situation. Since we assume that $M$ has at least one singular orbit, we try to fix the initial conditions on a
    singular orbit $G/K$. Since $\dim{G/K}<7$, the differential equations will degenerate at the singular orbit.
\end{enumerate}
\end{Rem}

Before we can make the initial value problem (\ref{Evol}),
(\ref{InitialEvol}) explicit, we have to fix a principal and a
singular orbit. The possible principal orbits are exactly those
homogeneous spaces $G/H$ which admit at least one $G$-invariant
cosymplectic $G_2$-structure $\omega$. From the $G$-invariance of
$\omega$ it follows that the isotropy action of $H$ on the
tangent space has to be equivalent to the action of a subgroup of
$G_2$ on $\text{Im}(\mathbb{O})$. Conversely, there always exists
a $G$-invariant $G_2$-structure on $G/H$ if $H$ has the
above property: $G$ can be considered as an $H$-bundle over
$G/H$. Its extension to a principal bundle with structure group
$G_2$ is the $G_2$-structure we search for. We sum up our
observations to the following lemma:

\begin{Le} \label{G2HomLemma}
Let $G/H$ be a homogeneous space such that $G$ acts effectively on $G/H$ and let $p\in G/H$ be arbitrary. We identify $H$ with
its isotropy representation on $T_p G/H$ and $G_2$ with its standard representation. $G/H$ admits a $G$-invariant
$G_2$-structure if and only if there exists a vector space isomorphism $\varphi: T_p G/H \rightarrow \text{Im}(\mathbb{O})$
such that $\varphi H \varphi^{-1} \subseteq G_2$.
\end{Le}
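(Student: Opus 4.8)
The plan is to prove both implications by translating "$G$-invariant $G_2$-structure" into a statement about the isotropy representation, exactly as the surrounding discussion suggests. For the ``only if'' direction, suppose $\omega$ is a $G$-invariant $G_2$-structure on $G/H$. Evaluating $\omega$ at the base point $p$ gives a three-form $\omega_p\in\bigwedge^3 T_p^\ast(G/H)$ which, by the definition of a $G_2$-structure, has in some frame the same coefficients as the model form \eqref{omega}; hence there is a linear isomorphism $\varphi\colon T_p(G/H)\to\mathrm{Im}(\mathbb{O})$ with $\varphi^\ast\omega_{\mathrm{model}}=\omega_p$. Because $\omega$ is $G$-invariant and $H$ fixes $p$, the isotropy representation of $H$ preserves $\omega_p$; conjugating through $\varphi$, the group $\varphi H\varphi^{-1}\subseteq GL(\mathrm{Im}(\mathbb{O}))$ preserves $\omega_{\mathrm{model}}$. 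Since $G_2$ is \emph{by definition} (via the automorphism characterization, together with the fact recorded in the first Lemma that automorphisms act as the stabilizer of $\omega$ on $\mathrm{Im}(\mathbb{O})$) precisely the stabilizer of $\omega_{\mathrm{model}}$ in $GL(\mathrm{Im}(\mathbb{O}))$, we conclude $\varphi H\varphi^{-1}\subseteq G_2$.

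For the ``if'' direction, the idea sketched just before the lemma is to build the $G_2$-structure as an associated bundle. Given $\varphi$ with $\varphi H\varphi^{-1}\subseteq G_2$, define $\omega_p:=\varphi^\ast\omega_{\mathrm{model}}$ on $T_p(G/H)$; by construction this is a model three-form and it is fixed by the isotropy representation of $H$. One then extends $\omega_p$ to a $G$-invariant three-form $\omega$ on all of $G/H$ by setting $\omega_{g\cdot p}:=(g^{-1})^\ast\omega_p$; this is well-defined precisely because $\omega_p$ is $H$-invariant, and it is smooth and $G$-invariant by construction. At every point $g\cdot p$ the form $\omega$ has, in the frame obtained by pushing forward a $\varphi$-adapted frame at $p$, the same coefficients as \eqref{omega}, so $\omega$ is a $G_2$-structure. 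Equivalently — and this is the bundle-theoretic phrasing in the text — the frame bundle reduction is the $G_2$-subbundle of the frame bundle of $G/H$ obtained by extending the principal $H$-bundle $G\to G/H$ along the inclusion $H\hookrightarrow G_2$ induced by $\mathrm{Ad}\circ\varphi$.

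The main point to handle carefully is the identification of $G_2$ with the stabilizer of $\omega_{\mathrm{model}}$ in $GL(7,\mathbb{R})$, not merely in $SO(7)$: a priori $\varphi H\varphi^{-1}$ only lands in the stabilizer of $\omega_{\mathrm{model}}$, and one must know this stabilizer is exactly $G_2$. This is the classical fact that the $GL(7,\mathbb{R})$-stabilizer of a generic (``positive'') three-form in seven variables is $G_2$, which underlies the very definition of a $G_2$-structure adopted in the paper; I would invoke it explicitly, citing it as standard (e.g.\ the same source as for the torsion classes). A secondary technical point is checking that the assignment $g\cdot p\mapsto (g^{-1})^\ast\omega_p$ is independent of the choice of $g$ with $g\cdot p$ fixed, i.e.\ that two such choices differ by an element of $H$ — this is immediate once one records that the $H$-invariance of $\omega_p$ is exactly equivalent to $\varphi H\varphi^{-1}\subseteq G_2$, closing the loop between the two implications.
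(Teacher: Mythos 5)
Your proof is correct and follows essentially the same path as the paper's (which sketches the argument in the paragraph preceding the lemma): the ``only if'' direction reads off that the isotropy representation of $H$ preserves $\omega_p$ and hence, after conjugation by $\varphi$, lies in the stabilizer of the model form, while the ``if'' direction extends $\omega_p$ $G$-invariantly, equivalently extends the $H$-bundle $G\to G/H$ along $H\hookrightarrow G_2$ to a $G_2$-subbundle of the frame bundle. You are right to flag the need for $G_2=\mathrm{Stab}_{GL(7,\mathbb{R})}(\omega_{\mathrm{model}})$; the paper defines $G_2$ as $\mathrm{Aut}(\mathbb{O})$ and invokes this classical identification silently, so stating it explicitly is a small genuine improvement in rigor rather than a different approach.
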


For our calculations, we not only need the existence of one particular $G$-invariant cosymplectic $G_2$-structure. More
precisely, we need a set of $G_2$-structures such that the flow equation $\tfrac{\partial}{\partial t}\ast\omega_t = d\omega_t$
does not leave this set. The set of all $G$-invariant cosymplectic $G_2$-structures clearly satisfies this condition. On the
principal orbits which we consider in this article we are able to describe that set explicitly. We will see
that it is not difficult to classify all $G$-invariant metrics on $G/H$. A Riemannian metric on a seven-dimensional manifold
together with an orientation  is the same as an $SO(7)$-structure. We thus can replace the problem of finding all
$G$-invariant $G_2$-structures by the following simpler one:

\begin{Problem} \label{G2StrucProblem}
Let $\mathcal{G}$ be an arbitrary $G$-invariant $SO(7)$-structure on $G/H$. Find all $G$-invariant $G_2$-structures on $G/H$
whose extension to a principal bundle with structure group $SO(7)$ is $\mathcal{G}$.
\end{Problem}

This problem can be solved by purely algebraic methods: Since $G$ acts transitively on $G/H$, a $G$-invariant $G_2$-structure
$\omega$ is determined by a basis of a tangent space $T_p G/H$ which can be identified with the basis
$(i,j,k,\epsilon,i\epsilon,j\epsilon,k\epsilon)$ of $\text{Im}(\mathbb{O})$. Any other $G$-invariant $G_2$-structure can be
identified with another basis or equivalently with a linear map $\phi:T_p G/H \rightarrow T_p G/H$ which maps the first basis
into the second one. The condition that $\omega$ is $G$-invariant translates into $H\phi = \phi H$, where $H$ is as before
identified with its isotropy representation on $T_p G/H$. If we identify $T_p G/H$ with $\text{Im}(\mathbb{O})$, this
condition translates into $\phi\in \text{Norm}_{GL(7)} H:=\{\phi\in GL(7)|\phi H\phi^{-1} = H\}$. The group of all $\phi$
which leave the extension of the $G_2$-structure to an $SO(7)$-structure invariant is $\text{Norm}_{SO(7)} H$. Since the
$G_2$-structure is stabilized by $\text{Norm}_{G_2} H$, the set we search for in Problem \ref{G2StrucProblem}, can be
described as $\text{Norm}_{SO(7)} H/\text{Norm}_{G_2} H$:

\begin{Le} \label{G2StrucSpace} Let $G/H$ be a seven-dimensional homogeneous space. We assume that $G$ acts effectively and
that $G/H$ admits a $G$-invariant $G_2$-structure. The space of all $G$-invariant $G_2$-structures on $G/H$ which have a fixed
associated metric and orientation is $\text{Norm}_{SO(7)} H$-equivariantly diffeomorphic to:

\begin{equation}
\text{Norm}_{SO(7)}H/\text{Norm}_{G_2}H\:.
\end{equation}

In particular, this space does not depend on the choice of the $G$-invariant metric and the orientation.
\end{Le}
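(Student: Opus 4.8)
The plan is to work at the level of the tangent space $T_p G/H$, which we identify once and for all with $\text{Im}(\mathbb{O})$ via a fixed reference $G_2$-structure $\omega_0$ (such an identification exists by hypothesis and by Lemma \ref{G2HomLemma}). Under this identification $H$ becomes a subgroup of $G_2 \subseteq SO(7)$, and any $G$-invariant tensor on $G/H$ corresponds to an $H$-invariant tensor on $\text{Im}(\mathbb{O})$. The key observation, already recorded in the discussion preceding the statement, is that a $G$-invariant $G_2$-structure is the same datum as a basis of $T_p G/H$, equivalently the image $\phi \cdot \omega_0$ of $\omega_0$ under a linear map $\phi \in GL(7)$; and such a form is again $G$-invariant if and only if $\phi$ intertwines the two isotropy representations, i.e. $H\phi = \phi H$, i.e. $\phi \in \text{Norm}_{GL(7)}H$. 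Thus the set of all $G$-invariant $G_2$-structures is the orbit $\text{Norm}_{GL(7)}H \cdot \omega_0$, and the stabilizer of $\omega_0$ inside this group is $\text{Norm}_{GL(7)}H \cap G_2 = \text{Norm}_{G_2}H$ (the last equality because $G_2$ itself normalizes nothing larger — an element of $G_2$ stabilizing $\omega_0$ and conjugating $H$ into $H$ is exactly an element of $\text{Norm}_{G_2}H$). This already gives a bijection of the full space of $G$-invariant $G_2$-structures with $\text{Norm}_{GL(7)}H/\text{Norm}_{G_2}H$.

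Next I would cut this down to the structures with a \emph{fixed} associated metric and orientation. Fix the associated $SO(7)$-structure of $\omega_0$, i.e. the standard inner product and orientation on $\text{Im}(\mathbb{O})$. A $G$-invariant $G_2$-structure $\phi \cdot \omega_0$ has this same associated metric and orientation precisely when $\phi$ preserves the standard $SO(7)$-structure, which (combined with $\phi \in \text{Norm}_{GL(7)}H$) means $\phi \in SO(7) \cap \text{Norm}_{GL(7)}H = \text{Norm}_{SO(7)}H$. Restricting the orbit map to this subgroup and intersecting the stabilizer with it gives the asserted description: the space in question is $\text{Norm}_{SO(7)}H \cdot \omega_0 \cong \text{Norm}_{SO(7)}H/(\text{Norm}_{SO(7)}H \cap G_2)$, and since $G_2 \subseteq SO(7)$ we have $\text{Norm}_{SO(7)}H \cap G_2 = \text{Norm}_{G_2}H$. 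The $\text{Norm}_{SO(7)}H$-equivariance is automatic because everything is built from the left action of $\text{Norm}_{SO(7)}H$ on itself by multiplication, which descends to the coset space and matches the action on $G_2$-structures by $\phi \mapsto \phi \cdot \omega$.

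Finally, for the last sentence I would argue that the construction is independent of the auxiliary choices: if $\omega_0'$ is a different $G$-invariant $G_2$-structure with some other associated metric $g'$ and orientation, then by the standard fact that $GL(7)$ acts transitively on pairs (inner product, orientation) — via $SO(7)$-reduction, any two $SO(7)$-structures differ by an element of $GL(7)$, and by $G$-invariance one can take that element to lie in $\text{Norm}_{GL(7)}H$ — there is $\psi \in \text{Norm}_{GL(7)}H$ with $\psi \cdot \omega_0 = \omega_0'$, and conjugation by $\psi$ carries $\text{Norm}_{SO(7)}H$ to $\text{Norm}_{SO(7')}H$ and $\text{Norm}_{G_2}H$ to $\text{Norm}_{G_2'}H$, inducing a diffeomorphism of the two coset spaces compatible with the identifications of $G_2$-structures. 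Hence the diffeomorphism type of the space is independent of the chosen metric and orientation.

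The step I expect to require the most care is the identification of the stabilizer, i.e. verifying cleanly that $\phi \cdot \omega_0 = \omega_0$ together with $\phi \in SO(7)$ forces $\phi \in G_2$ (this is just the definition of $G_2$ as the stabilizer of $\omega$ in $GL(7)$, intersected with $SO(7)$, but one should note $G_2 \subseteq SO(7)$ already so the $SO(7)$ condition is redundant here) — and, more substantively, checking that the orbit map $\text{Norm}_{SO(7)}H \to \{\text{$G$-invariant $G_2$-structures with fixed metric}\}$ is not just bijective but a diffeomorphism, which requires knowing the orbit is an embedded submanifold; this follows from the compactness of $\text{Norm}_{SO(7)}H$ (a closed subgroup of the compact group $SO(7)$) so that the orbit map from the compact homogeneous space $\text{Norm}_{SO(7)}H/\text{Norm}_{G_2}H$ is an injective immersion from a compact manifold, hence an embedding.
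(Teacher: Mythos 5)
Your argument follows the paper's own proof essentially step by step: fix a reference $G_2$-structure $\omega_0$ to identify $T_pG/H$ with $\text{Im}(\mathbb{O})$, observe that $G$-invariant $G_2$-structures arise as $\phi\cdot\omega_0$ with $\phi\in\text{Norm}_{GL(7)}H$, restrict to $\text{Norm}_{SO(7)}H$ to pin down the associated metric and orientation, and identify the stabilizer of $\omega_0$ inside that group with $\text{Norm}_{G_2}H$. You additionally supply the compactness argument showing the orbit map is an embedding and the conjugation argument for independence of the chosen metric and orientation, both of which the paper leaves implicit, so your write-up is if anything slightly more complete at those two points.
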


By solving $d\ast\omega=0$ we are able to parameterize the space
of all $G$-invariant cosymplectic $G_2$-structures on $G/H$ and to
transform the equation $\tfrac{\partial}{\partial t}\ast\omega_t =
d\omega_t$ into an explicit system of ordinary differential
equations. We assume that we have obtained a solution of those
equations which has a singular orbit. In that situation, we are
not done yet, since we have to prove that the metric $g$ and the
four-form $\Omega$ can be smoothly extended from the union of all
principal orbits to the singular orbit. A set of necessary and
sufficient smoothness conditions for arbitrary tensor fields of
cohomogeneity one can be found in Eschenburg, Wang \cite{Esch}.
Before we can state the theorem of Eschenburg and Wang, we have to
introduce some notation.

The principal orbit $G/H$ is a sphere bundle over the singular
orbit $G/K$. There is an appropriate parameterize of $G/K$, a so
called \emph{tubular parameterize}, which is a disc bundle over
$G/K$. The tangent space of any point of $G/K$ can therefore be
splitted into a horizontal and a vertical part. As a $K$-module,
the horizontal part is the same as the complement $\mathfrak{p}$
of $\mathfrak{k}\subseteq\mathfrak{g}$ with respect to an
$\text{Ad}_K$-invariant metric. The orbits of the $K$-action on
the vertical part $\mathfrak{p}^\perp$ are, except $\{0\}$,
spheres of type $K/H$. Let $\rho$ be a $G$-invariant tensor field
with values in a vector bundle $\mathcal{B}$ which is defined
everywhere on the tubular parameterize except on $G/K$. Its
extension to the singular orbit is determined by its values at an
arbitrary point of $G/K$ if it exists.

On any cohomogeneity one manifold there exists a geodesic $\gamma$
which intersects all orbits perpendicularly. We assume that
$\gamma(0)\in G/K$ and that $\gamma$ is parameterized by
arclength. The action of $K$ on such a geodesic generates a fiber
of the disc bundle. Therefore, it suffices to consider $\rho$
along $\gamma$ only. Any metric whose holonomy is contained in
$\text{Spin}(7)$ is Einstein. Since any Einstein metric is
analytic \cite{DeKa}, we assume that $\rho$ is a power series with
respect to the parameter of $\gamma$. The $m^{th}$ derivative of
$\rho$ in the vertical direction can be considered as a map, which
assigns to a tuple $(v_1,\ldots,v_m)\in \mathfrak{p}^\perp$ an
element

\begin{equation}
\left.\frac{\partial^m}{\partial v_1\ldots\partial v_m}\right|_{\gamma(0)} \rho
\end{equation}

of $\mathcal{B}_p$. This map can be
extended to a map $\psi_m:S^m(\mathfrak{p}^\perp) \rightarrow
\mathcal{B}_p$, where $S^m$ denotes the symmetric power. Since
$\rho$ is analytic in the above sense, it is determined by the
$\psi_m$. If $\rho$ has a smooth extension to the singular orbit,
the $\psi_m$ have to be $K$-equivariant. This necessary
condition is in fact sufficient, too:

\begin{Th} \label{SmoothExtension} (Cf. Eschenburg, Wang \cite{Esch}.)
Let $(M,g)$ be Riemannian manifold with an isometric action of cohomogeneity one by a Lie group $G$. We assume that this
action has a singular orbit. The isotropy group of the $G$-action at the singular orbit will be denoted by $K$. Let
$\mathcal{B}\subseteq \bigotimes^{s_1} TM\otimes \bigotimes^{s_2} T^\ast M$ be a vector bundle over $M$ whose fibers at the
singular orbit are $K$-equivariantly isomorphic to a $K$-module $B$. Let $r:(0,\varepsilon)\rightarrow B$, where
$\varepsilon>0$, be a real analytic map with Taylor expansion $\sum_{m=1}^\infty r_m t^m$. By the construction which we have
described above, we can identify $r$ with a tensor field $\rho$. This tensor field is defined on a tubular neighborhood of the
singular orbit, but not on the singular orbit itself. $\rho$ is well-defined and has a smooth extension to the singular orbit
if and only if

\begin{equation}
r_m\in \imath_m(W_m)\quad\forall m\in\mathbb{N}_0\:.
\end{equation}

In the above formula, $W_m$ is the space of all $K$-equivariant maps:

\begin{equation}
W_m:=\{P:S^m(\mathfrak{p}^\perp)\rightarrow B |P\:\text{is linear and $K$-equivariant}\}
\end{equation}

and $\imath_m$ is the evaluation map

\begin{equation}
\begin{split}
\imath_m & :W_m\rightarrow B\\ \imath_m(P) & :=
P(\gamma'(0))\:.\\
\end{split}
\end{equation}
\end{Th}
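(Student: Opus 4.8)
The plan is to reduce the global statement on $M$ to a local, purely $K$-equivariant problem on the normal slice of the singular orbit, and then to treat the two implications separately: one direction is Taylor's theorem together with equivariance of Taylor coefficients, the other is the substantial one and rests on a flatness estimate. Concretely, since $\rho$ is $G$-invariant and $G$ acts transitively on $G/K$, smoothness of $\rho$ near the whole singular orbit is equivalent to smoothness near the normal slice through one point $p=\gamma(0)$. A tubular neighbourhood of $G/K$ is $G$-equivariantly a disc bundle $G\times_K D$ with $D\subseteq V:=\mathfrak{p}^\perp$ a small disc, and $\mathcal{B}$ is built from $TM$, whose restriction to the contractible fibre $\{p\}\times D$ is a trivial $K$-equivariant bundle; hence $\mathcal{B}|_D\cong D\times B$ as $K$-bundles, and under this trivialization the $G$-invariant section $\rho$ corresponds to a $K$-equivariant map $f:D\setminus\{0\}\to B$. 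Because each nonzero $K$-orbit in $V$ is a copy of the collapsing sphere $K/H$ (the structure of cohomogeneity one slice representations), $K$ acts transitively on every sphere $\{\,|w|=t\,\}$, so $f$ is determined by its restriction to the ray $t\mapsto tv_0$, $v_0:=\gamma'(0)$ — and that restriction is precisely the given data $r$, the identification being consistent because $r(t)$ is automatically fixed by $H=\mathrm{Stab}_K(v_0)$, the $G$-isotropy along $\gamma$. The theorem thus becomes: the $K$-equivariant map $f$ assembled from $r$ extends smoothly over $0$ if and only if $r_m\in\imath_m(W_m)$ for all $m$.

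For necessity, if $f$ extends smoothly then differentiating $f(kw)=k\cdot f(w)$ at $w=0$ shows the $m$-th Taylor term of $f$ is a $K$-equivariant homogeneous polynomial map $\hat P_m:V\to B$, i.e. an element $P_m\in W_m=\mathrm{Hom}_K(S^mV,B)$, and evaluating on the ray gives $r_m=\imath_m(P_m)\in\imath_m(W_m)$; note only $C^\infty$ is needed here. Conversely, assume $r_m\in\imath_m(W_m)$, choose $P_m\in W_m$ with $\imath_m(P_m)=r_m$ and let $\hat P_m$ be the associated homogeneous maps, so $\hat P_m(tv_0)=t^m r_m$. An equivariant Borel lemma — take any smooth $F_0:V\to B$ with Taylor series $\sum_m\hat P_m$ and average it over the compact group $K$ with respect to a $K$-invariant inner product on $B$, which preserves the Taylor series since each $\hat P_m$ is already equivariant — produces a smooth $K$-equivariant $F:V\to B$ with Taylor series $\sum_m\hat P_m$ at $0$.

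For sufficiency, set $\sigma:=f-F$ on $V\setminus\{0\}$; it is smooth and $K$-equivariant there. Along the ray, $F(tv_0)$ has asymptotic expansion $\sum_m t^m r_m$ and $r(t)$ has Taylor expansion $\sum_m r_m t^m$, so $t\mapsto\sigma(tv_0)$ has vanishing asymptotic expansion, i.e. is flat at $t=0$ (this is where real-analyticity of $r$, or just that it is $C^\infty$ with the prescribed expansion, enters). Now use $\|\sigma(w)\|=\|\sigma(|w|v_0)\|$ (orthogonality of the $K$-action on $B$, which we may assume) together with the decomposition of the differential of $\sigma$ into a radial part — a $t$-derivative of the flat function $\sigma(tv_0)$, hence flat — and a spherical part — where on $\{\,|w|=t\,\}$ the map $w\mapsto\sigma(w)$ is the $K$-orbit map of the fixed vector $\sigma(tv_0)$, so its angular derivatives are bounded operators applied to $\sigma(tv_0)$ — to conclude $\partial^\alpha\sigma(w)=O(|w|^N)$ near $0$ for every multi-index $\alpha$ and every $N$ (the $1/t$ rescalings of angular differentiation being harmless because $N$ is arbitrary). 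Hence $\sigma$ extends $C^\infty$ across $0$ with all derivatives vanishing, and therefore $f=\sigma+F$ extends smoothly, which is what was wanted.

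The hard part is the sufficiency argument, and within it the control of \emph{all} derivatives of $\sigma$ rather than just its magnitude: $\sigma$ is a priori defined only off the origin, and flatness of $\sigma$ itself does not by itself bound $\partial\sigma$; the mechanism that saves the day is that the radial behaviour of $\sigma$ is governed by the single (analytic) function $r$ and its angular behaviour by the fixed homogeneous fibre $K/H$, and both sources of behaviour are flat. Two further technical points to pin down carefully, though neither is deep, are the $K$-equivariant trivialization of $\mathcal{B}|_D$ that makes the passage from $\rho$ to $r$ literally correct, and the equivariant Borel lemma.
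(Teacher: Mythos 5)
The paper itself offers no proof of this theorem: it is quoted from Eschenburg--Wang \cite{Esch} and used as a black box, so there is no internal argument to compare yours with; I can only assess your proposal on its own merits, and it is essentially correct and close in spirit to the original proof. The reduction of the $G$-invariant field to a $K$-equivariant map $f$ on the punctured slice $D\setminus\{0\}\subseteq\mathfrak{p}^\perp$ (using transitivity of $K$ on the spheres, so that $f$ is determined by $r$), the identification of the Taylor terms of a smooth equivariant map with elements of $W_m$ for necessity, and, for sufficiency, the equivariant Borel construction followed by the flatness of the remainder $\sigma=f-F$ are all sound, and your use of Borel plus averaging over the compact group $K$ neatly avoids any convergence question for $\sum_m \hat P_m$. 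Two points deserve more care than your sketch gives them. First, in the sufficiency direction the well-definedness of $f(k\,(tv_0)):=k\cdot r(t)$ requires $r(t)$ to be fixed by $H=\mathrm{Stab}_K(v_0)$; this is not an assumption on $r$, so you should derive it either from the hypothesis $r_m\in\imath_m(W_m)$ (each $\imath_m(P)=P(\gamma'(0),\ldots,\gamma'(0))$ is $H$-fixed because $H$ fixes $\gamma'(0)$, and analyticity then gives $H$-fixedness of $r(t)$ itself) or, in the converse direction, from the $G$-invariance of the given $\rho$; as written you appeal to the latter in a step where only the former is available. Second, the control of \emph{all} derivatives of $\sigma$: your remark that angular derivatives are bounded operators applied to $\sigma(tv_0)$ is correct at first order, since equivariance gives $X^{*}\sigma=X_B\circ\sigma$ for the Killing fields $X^{*}$ of the $K$-action on $V$, but to obtain flatness of arbitrary Cartesian partials you should organize the iteration, for instance by pulling $\sigma$ back along the submersion $K\times(0,\varepsilon)\rightarrow V\setminus\{0\}$, $(k,t)\mapsto k(tv_0)$, where $\sigma(k(tv_0))=k\cdot\tau(t)$ with $\tau$ flat at $0$, and noting that inverting the differential of this map costs only powers of $t^{-1}$, which flatness absorbs; one then concludes with the standard fact that a map smooth away from the origin, all of whose partial derivatives extend continuously by $0$, is smooth and flat at the origin. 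With these details pinned down your argument is a complete and self-contained proof of the criterion the paper cites.
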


The metrics $g$ which we consider in this article have no "mixed coefficients", i.e.

\begin{equation}
g\in S^2(\mathfrak{p}) \oplus S^2(\mathfrak{p}^\perp)\:.
\end{equation}

In order to check the smoothness of $g$, it suffices to consider the subspaces

\begin{equation}
\begin{split}
W^h_m & :=\{P:S^m(\mathfrak{p}^\perp)\rightarrow S^2(\mathfrak{p}) |P\:\text{is linear and $K$-equivariant}\}\quad\text{and} \\
W^v_m & :=\{P:S^m(\mathfrak{p}^\perp)\rightarrow S^2(\mathfrak{p}^\perp) |P\:\text{is linear and $K$-equivariant}\}\\
\end{split}
\end{equation}

of $W_m$. We describe certain elements of $W^h_m$ and $W^v_m$.
This description yields a sufficient smoothness condition for the
metric which can be easily checked. Let $q^h\in S^2(\mathfrak{p})$
be $K$-invariant. We denote the derivation in the direction of
$\gamma'(0)$ by $\tfrac{\partial}{\partial t}$. Let $n:= \dim{K} -
\dim{H}$ and let $(e_1,\ldots,e_n)$ be an orthonormal basis of
$\mathfrak{p}^\perp$ with respect to a $K$-invariant metric $q^v$
on $\mathfrak{p}^\perp$. Since $K/H$ has to be a distance sphere,
$q^v$ is unique up to a constant factor, which is fixed by
$\|\tfrac{\partial}{\partial t}\|=1$. $\tfrac{\partial}{\partial
t}$ extends by the action of $K$ to a vector field in the radial
direction and we have

\begin{equation}
\tfrac{\partial}{\partial t} \otimes \tfrac{\partial}{\partial t} = e_1\otimes e_1 + \ldots + e_n\otimes e_n\:.
\end{equation}

We define

\begin{equation}
\begin{split}
& \phi^h_{2m} : S^{2m}(\mathfrak{p}^\perp) \rightarrow S^2(\mathfrak{p}) \\
& \phi^h_{2m} ((e_1\otimes e_1 + \ldots + e_n\otimes e_n)^{\otimes m}) := q^h\:. \\
\end{split}
\end{equation}

On the orthogonal complement of $(e_1\otimes e_1 + \ldots + e_n\otimes e_n)^{\otimes m}$, $\phi^h_{2m}$ shall vanish.
$\phi^h_{2m}$ is obviously $K$-equivariant. Theorem \ref{SmoothExtension} tells us that the restriction of the metric to
$S^2(\mathfrak{p})$ is smooth if all odd derivatives vanish and the even derivatives are described by a map of type
$\phi^h_{2m}$. We are now going to interpret this fact geometrically. On the union of all principal orbits we have

\begin{equation}
g = g_t + dt^2\:,
\end{equation}

where $g_t$ is a $t$-dependent $G$-invariant metric on $G/H$. The
tangent space of $G/H$ can be identified with the complement
$\mathfrak{m}$ of $\mathfrak{h}$ in $\mathfrak{g}$. This
identification allows us to consider $\mathfrak{p}$ as a subset of
$T_p G/H$. Our statement on $\phi^h_{2m}$ simply means that the
horizontal part of the metric is smooth if it is invariant under
the action of $K$ on $S^2(\mathfrak{p})$ and if $t\mapsto
g_t(v,v)$ is an even analytic function for all $v\in\mathfrak{p}$.

For the vertical part of the metric, we define analogous maps
$\phi^v_{2m}:S^{2m}(\mathfrak{p}^\perp)$ $\rightarrow
S^2(\mathfrak{p}^\perp)$. If $m=0$, $\phi^v_{2m}$ has to assign
$q^v$ to $1\in\mathbb{R}$. We translate the uniqueness of $q^v$
into a geometrical condition on $g$: Since the vertical part of
$g$ has to coincide with $q^v$ up to $0^{th}$ order, the metric on
the collapsing sphere $K/H$ has to approach the metric of a round
sphere with radius $t$. The length of any great circle on the
collapsing sphere has to be $2\pi t + O(t^2)$ for small $t$. We
denote this length by $\ell (t)$ and have finally found the
smoothness condition $\ell'(0)=2\pi$.

Next, we construct the $\phi^v_{2m}$ for $m\geq 1$. Without loss
of generality, let $e_1=\gamma'(0)$. $\text{span}(e_2,\ldots,e_n)$
is an $H$-module and can be identified with the tangent space of
the collapsing sphere at $\gamma(t)$. There is a one-to-one
correspondence between the $H$-invariant symmetric bilinear forms
on $\text{span}(e_2,\ldots,e_n)$ and the $K$-invariant sections of
$S^2(T^\ast K/H)$. The second derivative of the metric in the
direction $e_1$ has to be such a bilinear form, which we denote by
$q_1^v$. By the action of $K$ we can transform $e_1$ into any
other direction in $\mathfrak{p}^\perp$. We therefore define the
$K$-equivariant map $\phi^v_2$ by

\begin{equation}
\phi^v_2\left( (L_{k\ast} e_1) \otimes (L_{k\ast} e_1)\right) :=
L^\ast_k q^v_1\quad\forall k\in K\:.
\end{equation}

In the above formula, $L_{k\ast}$ denotes the push-forward of the
left-multiplication by $k$ and $L^\ast_k$ the pull-back.
Analogously, to the horizontal case, we define

\begin{equation}
\phi^v_{2m}\left( (e_1\otimes e_1 + \ldots + e_n\otimes
e_n)^{\otimes m-1} \vee (L_{k\ast} e_1) \otimes (L_{k\ast}
e_1)\right) := L^\ast_k q^v_m\quad\forall k\in K\:,
\end{equation}

where $\vee$ denotes the symmetrized tensor product and the
$q_m^v$ are arbitrary $H$-invariant symmetric bilinear forms on
$\text{span}(e_2,\ldots,e_n)$. The fact that the maps
$\phi^v_{2m}$ are for any choice of $q^v_m$ $K$-equivariant makes
a statement on the even derivatives of the vertical part of the
metric. If we write our metric as $g_t + dt^2$, the restriction of
$g_t$ to the vertical directions describes the metric on a
shrinking sphere. In other words, we write the metric in "polar
coordinates" rather than in "Euclidean" ones. A $2m^{th}$
derivative of the metric on $\mathfrak{p}^\perp$ at the origin
therefore corresponds to a $(2m+1)^{st}$ derivative of the
vertical part of $g_t$. Theorem \ref{SmoothExtension} states that
the vertical part of the metric is smooth if all even derivatives
of $g_t$ vanish and all odd ones are described by maps of type
$\phi^v_{2m}$. More explicitly, the vertical part is smooth if

\begin{enumerate}
    \item the restriction of $g_t$ to $\text{span}(e_2,\ldots,e_n)$ is $H$-invariant for all $t$,
    \item the values of $\tfrac{\partial}{\partial t}|_{t=0}g_t(e_i,e_j)$ for $i,j\in\{2,\ldots,n\}$ make $\ell'(0)=2\pi$ for
    any great circle on $K/H$,
    \item and $\sqrt{g_t(v,v)}$ is for all $v\in\mathfrak{p}$ an even analytic function.
\end{enumerate}

The smoothness conditions which we have proven on the previous
pages will be sufficient for the purpose of this article. At the
end of this section, we study the question if the four-form
$\Omega$ has a smooth extension to the singular orbit. We assume
that we have already proven that the metric satisfies the
smoothness conditions. First, we consider the case where the
holonomy of the metric on the union of all principal orbits $M^0$
is $\text{Spin}(7)$. Since the metric is smooth, the holonomy of
$(M,g)$ equals $\text{Spin}(7)$, too. Therefore, there exists a
unique smooth $\text{Spin}(7)$-structure $\widetilde{\Omega}$ on
$M$, whose associated metric is $g$. $\Omega$ and
$\widetilde{\Omega}$ coincide on $M^0$. This observation proves
that $\widetilde{\Omega}$ is a smooth extension of $\Omega$ to the
singular orbit.

For our considerations, the case of holonomy $SU(4)$ is more important. Before we can prove the smoothness of $\Omega$ in this
case, we need the following lemma:

\begin{Le} \label{SU4Lemma}
\begin{enumerate}
    \item Let $M$ be an eight-dimensional manifold which carries a parallel $SU(4)$-structure $\mathfrak{G}$. We denote the
    space of all parallel $\text{Spin}(7)$-structures on $M$ which are an extension of $\mathfrak{G}$ and have the same
    extension to an $SO(8)$-structure as $\mathfrak{G}$ by $\mathcal{S}$. Any connected component of $\mathcal{S}$ is
    diffeomorphic to a circle.
    \item Let $M$ be an eight-dimensional manifold which carries a one-parame\-ter family $\mathcal{S}$ of parallel
    $\text{Spin}(7)$-structures. Moreover, let the extension of all $\text{Spin}(7)$-structures to an $SO(8)$-structure
    be the same and let $\mathcal{S}$ be diffeomorphic to a circle. Then, there also exists a parallel $SU(4)$-structure on $M$.
\end{enumerate}
\end{Le}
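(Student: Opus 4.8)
The plan is to understand the statement representation-theoretically at a point and then globalize. Recall that $\mathrm{Spin}(7)\subset SO(8)$ is the stabilizer of the four-form $\Omega_0$ of (\ref{Omega}), and $SU(4)\subset\mathrm{Spin}(7)$ is the stabilizer of a compatible pair $(J,\Psi)$ consisting of a complex structure and a complex volume form; equivalently $SU(4)$ is the stabilizer of a unit spinor $\sigma\in\Delta_8^+\cong\mathbb{R}^8$. The key algebraic fact I would isolate first is: fixing the $SO(8)$-structure, the set of $\mathrm{Spin}(7)$-structures refining it is $SO(8)/\mathrm{Spin}(7)\cong\mathbb{RP}^7$, while the set of $SU(4)$-structures refining a given $SU(4)\subset\mathrm{Spin}(7)$ inside that $SO(8)$ is the orbit $\mathrm{Spin}(7)/SU(4)\cong\mathbb{RP}^7$ as well — but the relevant object is the family of $\mathrm{Spin}(7)$-structures \emph{containing a fixed} $SU(4)$-structure, which is $N_{\mathrm{Spin}(7)}(SU(4))/SU(4)$. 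Since $SU(4)\subset\mathrm{Spin}(7)$ is the stabilizer of a spinor line and the normalizer acts on that line, this quotient is the circle $U(1)=N_{\mathrm{Spin}(7)}(SU(4))/SU(4)$ acting by phase rotation $e^{i\theta}$ on $(J,\Psi)\mapsto(J,e^{i\theta}\Psi)$, each of which yields a distinct $\mathrm{Spin}(7)$-structure $\Omega_\theta=\tfrac12\omega_J\wedge\omega_J+\mathrm{Re}(e^{i\theta}\Psi)$. This gives (1) pointwise.

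For (1) globally: given the parallel $SU(4)$-structure $\mathfrak{G}=(J,\Psi)$ on $M$, for each $p$ the fiber of $\mathcal{S}$ over the $SO(8)$-structure is the circle $\{\Omega_\theta(p)\}_{\theta\in\mathbb{R}/2\pi\mathbb{Z}}$ just described; I would check that these fit together into a principal $U(1)$-bundle over $M$ — in fact a \emph{trivial} one, since $\theta$ is a genuine global coordinate: the recipe $\theta\mapsto\Omega_\theta$ using the \emph{fixed} globally defined $(J,\Psi)$ is well defined on all of $M$ simultaneously. Each $\Omega_\theta$ is parallel because $\nabla^g J=0$ and $\nabla^g\Psi=0$ imply $\nabla^g\Omega_\theta=0$. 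So $\mathcal{S}\supseteq\{\Omega_\theta\}\cong S^1$, and conversely any element of a connected component of $\mathcal{S}$ near $\Omega_0$ must be of this form by the pointwise analysis plus a connectedness/rigidity argument (a parallel $\mathrm{Spin}(7)$-structure is determined by its value at one point together with $g$), so each component is exactly such a circle.

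For (2): given the circle family $\mathcal{S}=\{\Omega_\theta\}$, all with common associated $SO(8)$-structure, I would recover $SU(4)$ as the ``intersection of the holonomy reductions.'' Concretely, at each $p$ the common stabilizer $\bigcap_\theta \mathrm{Stab}(\Omega_\theta(p))$ inside $SO(8)$ is $SU(4)$ — because the $\mathrm{Spin}(7)$'s in question are $e^{i\theta}\,\mathrm{Spin}(7)_0\,e^{-i\theta}$ for a circle subgroup $e^{i\theta}$ that is \emph{not} contained in $\mathrm{Spin}(7)_0$, and a standard computation shows $\bigcap_\theta e^{i\theta}\mathrm{Spin}(7)_0 e^{-i\theta}=SU(4)$. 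This intersection depends smoothly on $p$ and is preserved by $\nabla^g$ since every $\Omega_\theta$ is parallel, so it defines a parallel $SU(4)$-structure. Equivalently and perhaps more cleanly: two distinct parallel $\mathrm{Spin}(7)$-structures $\Omega_0,\Omega_{\theta_0}$ with the same metric already force holonomy $\subseteq SU(4)$ (the holonomy must stabilize two distinct spinors, hence a complex line of spinors, hence reduces to $SU(4)$), and then the circle is automatically the phase family of the resulting $(J,\Psi)$.

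The main obstacle I expect is the purely Lie-theoretic bookkeeping in the pointwise step: verifying that $N_{\mathrm{Spin}(7)}(SU(4))/SU(4)\cong U(1)$ and, crucially, that $\bigcap_{\theta}e^{i\theta}\mathrm{Spin}(7)_0e^{-i\theta}=SU(4)$ rather than something larger — one must rule out that the $\mathrm{Spin}(7)$-copies could all share a bigger common subgroup, which amounts to checking that the $8$-dimensional spin representations $\Delta_8^+$ of these rotated $\mathrm{Spin}(7)$'s have intersecting fixed-spinor-lines exactly spanning a $2$-plane. Everything else is either the elementary observation that $\nabla^g$-parallel objects assemble into parallel subbundles, or the rigidity statement that a parallel $\mathrm{Spin}(7)$-structure refining a given metric is locally determined by one pointwise value, which lets us conclude that a connected component of $\mathcal{S}$ cannot be larger than the circle we have exhibited.
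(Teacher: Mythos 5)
Your ``key algebraic fact'' for part (1) is misstated: the circle of $\text{Spin}(7)$-structures extending a fixed $SU(4)$-structure is \emph{not} $\text{Norm}_{\text{Spin}(7)}SU(4)/SU(4)$. That quotient is finite, not $U(1)$: every element of $\text{Spin}(7)$ fixes the spinor $\sigma_0\in\Delta_8^+$ whose stabilizer defines it, and under $\text{Spin}(7)$ one has $\Delta_8^+=\mathbb{R}\sigma_0\oplus\mathbb{R}^7$ with $SU(4)\cong\text{Spin}(6)$ the stabilizer of a second spinor $\sigma_1$ lying in the $\mathbb{R}^7$-summand; an element of $\text{Spin}(7)$ normalizing $SU(4)$ preserves the fixed plane $\langle\sigma_0,\sigma_1\rangle$, fixes $\sigma_0$, and hence can act on $\sigma_1$ only by $\pm 1$ -- it cannot realize the phase family $(J,\Psi)\mapsto(J,e^{i\theta}\Psi)$. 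The rotations you need come from elements of $SO(8)$ which normalize $SU(4)$ but do \emph{not} lie in $\text{Spin}(7)$, e.g.\ the scalars $e^{i\theta}\,\text{Id}_{\mathbb{C}^4}$; the relevant group is the identity component $U(4)$ of $\text{Norm}_{SO(8)}SU(4)$, and the circle is $U(4)/SU(4)$ -- which is precisely how the paper argues. Your explicit family $\Omega_\theta=\tfrac12\omega_J\wedge\omega_J+\text{Re}(e^{i\theta}\Psi)$ and its globalization by parallelism are fine once this is repaired, but note that to conclude a connected component of $\mathcal{S}$ is no larger than this circle you still need the pointwise fact that the $SU(4)$-invariant spinor lines in $\Delta_8^+$ are exactly the lines of the fixed $2$-plane (equivalently that $U(4)$ acts transitively on the admissible forms); this is what your erroneous normalizer identity was supposed to deliver.

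In part (2), your primary argument presupposes that the given circle is the conjugation orbit $e^{i\theta}\,\text{Spin}(7)\,e^{-i\theta}$ of a circle subgroup; that is not part of the hypothesis and would itself require proof. Your fallback argument, however, is sound and genuinely different from the paper's: two distinct parallel $\text{Spin}(7)$-structures with the same metric and orientation give two linearly independent parallel spinors in $\Delta_8^+$, and since $\Delta_8^+|_{\text{Spin}(7)}=1\oplus 7$ with $7$ the vector representation, the common stabilizer is $\text{Spin}(6)\cong SU(4)$, so the holonomy reduces and a parallel $SU(4)$-structure exists (if one wishes to avoid spin-structure issues, intersect the two parallel $\text{Spin}(7)$-subbundles of the frame bundle instead). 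This uses only two members of $\mathcal{S}$, whereas the paper instead invokes the list of possible holonomy groups inside $\text{Spin}(7)$ -- only $G_2$ fails to be contained in $SU(4)$ -- and excludes holonomy $G_2$ because then the compatible parallel $\text{Spin}(7)$-structures form a family $\ast\omega+\lambda\,\alpha\wedge\omega$ containing no circle, which is where the circle hypothesis enters. So: correct the normalizer computation in (1), and in (2) either justify the conjugation-orbit claim or simply run your two-spinor argument; with those repairs the proposal works.
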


\begin{proof} Since all $G$-structures in the lemma are parallel,
it suffices to consider the situation at a single point. Moreover,
we can identify the tangent space of $M$ with $\mathbb{R}^8$ and
the groups $SU(4)$, $\text{Spin}(7)$, and $SO(8)$ with their real
eight-dimensional irreducible representations. We search for
matrices $A\in GL(8,\mathbb{R})$ such that conjugation with $A$
leaves $SU(4)$ and $SO(8)$ invariant but changes $\text{Spin}(7)$.
Since conjugation by a multiple of the identity matrix leaves any
group invariant, we can restrict ourselves to $A\in
SL(8,\mathbb{R})$. By algebraic arguments, we see that set of all
$A$ which leave $SU(4)$ and $SO(8)$ invariant is a group $G$ with
identity component $U(4)$. The subgroup of $G$ which also leaves
$\text{Spin}(7)$ invariant has $SU(4)$ as identity component. The
connected components of $\mathcal{S}$ thus are diffeomorphic to
$U(4)/SU(4) \cong S^1$.

In the situation of the second part of the lemma, the holonomy has
to be a proper subgroup of $\text{Spin}(7)$. All we have to prove
is that the holonomy is contained in $SU(4)$. The only case where
this is not true is where the holonomy equals $G_2$. If the
holonomy was $G_2$, there would be a parallel $G_2$-structure
$\omega$ and a parallel one-form $\alpha$ on $M$. Any parallel
$\text{Spin}(7)$-structure in $\mathcal{S}$ would be given by
$\ast\omega + \lambda\:\alpha\wedge\omega$ for a  $\lambda\in
\mathbb{R}\setminus\{0\}$. Since $\mathcal{S}$ has no subset which
is diffeomorphic to a circle, we have obtained a contradiction.
\end{proof}

\begin{Rem}
\label{SU4LemmaRemark} If we drop the word "parallel" and replace
the circle $\mathcal{S}$ by the sections of a certain circle
bundle over $M$, the statement of Lemma \ref{SU4Lemma} remains
true.
\end{Rem}

We now assume that the holonomy of $M^0$ is $SU(4)$ and that we
have checked the smoothness conditions for $g$. By the same
arguments as above, we can prove the existence of a unique smooth
$SU(4)$-structure on all of $M$. It follows from Lemma
\ref{SU4Lemma} that there exists a certain family $\mathcal{S}$ of
$\text{Spin}(7)$-structures on $M$. On $M^0$, $\Omega$ has to
coincide with one of them. Since all elements of $\mathcal{S}$ are
smooth at the singular orbit, $\Omega$ has a smooth extension to
the singular orbit. With help of the facts which we have collected
in this section we are now able to construct explicit examples of
$\text{Spin}(7)$-manifolds.

\section{Metrics with principal orbit $Q^{k,l,m}$}
\label{Q111}

In this section, we assume that the parallel
$\text{Spin}(7)$-structure is preserved by a cohomogeneity one
action of $SU(2)^3$. For our calculations, we will re\-present the
elements of $SU(2)^3$ by triples of complex $2\times 2$-matrices.
Since all principal orbits are seven-dimensional, the isotropy
group at a point of a principal orbit is isomorphic to $U(1)^2$.
There are infinitely many non-conjugate embeddings of $U(1)^2$
into $SU(2)^3$. We will describe them in detail and check if the
coset space $SU(2)^3/U(1)^2$ admits an $SU(2)^3$-invariant
$G_2$-structure. In order to do this, we first describe the
one-dimensional subalgebras of $3\mathfrak{su}(2)$. Up to
conjugation, any such subalgebra is embedded by a map $i_{k,l,m}
:\mathfrak{u}(1) \rightarrow 3\mathfrak{su}(2)$ with
$k,l,m\in\mathbb{Z}$ and

\begin{equation}
i_{k,l,m}(ix)
:=\left(\left(\begin{array}{cc}
ikx & 0 \\
0 & -ikx \\
\end{array}\right),
\left(\begin{array}{cc}
ilx & 0 \\
0 & -ilx \\
\end{array}\right),
\left(\begin{array}{cc}
imx & 0 \\
0 & -imx\\
\end{array}\right)\right)\:.
\end{equation}

Without loss of generality, we can assume that $(k,l,m)$ are coprime. Furthermore, we can even restrict ourselves to
non-negative values of $k$, $l$, and $m$: Let $\phi_P:SU(2)^3\rightarrow SU(2)^3$ be defined by $\phi_P(Q):=PQP^{-1}$, where

\begin{equation}
P:=\left(\left(\begin{array}{cc}
0 & 1 \\
-1 & 0 \\
\end{array}\right),
\left(\begin{array}{cc}
1 & 0 \\
0 & 1 \\
\end{array}\right),
\left(\begin{array}{cc}
1 & 0 \\
0 & 1\\
\end{array}\right)\right)\in SU(2)^3\:.
\end{equation}

$\phi_P$ maps $i_{k,l,m}(\mathfrak{u}(1))$ into
$i_{-k,l,m}(\mathfrak{u}(1))$. By another choice of $P$, we can
map $i_{k,l,m}(\mathfrak{u}(1))$ into
$i_{k,-l,m}(\mathfrak{u}(1))$ or $i_{k,l,-m}(\mathfrak{u}(1))$.
Therefore, our restriction to the case where $k,l,m\geq 0$ is
justified. It is also possible to choose $P$ as a permutation of
the three components of $(\mathbb{C}^2)^3$. Since the group which
is generated by the corresponding $\phi_P$ acts on $(k,l,m)$ by
permutations, too, we can finally assume that $k\geq l\geq m\geq
0$.

We continue describing the embeddings of $U(1)^2$ into $SU(2)^3$.
The one-dimen\-sion\-al subalgebras $i_{k,l,m}(\mathfrak{u}(1))$
together span a Cartan subalgebra of $3\mathfrak{su}(2)$, which we
denote throughout this section by $3\mathfrak{u}(1)$. The equation
$q(X,Y):=-\text{tr}(XY)$, where $X,Y\in 3\mathfrak{su}(2)$,
defines a biinvariant metric $q$ on $SU(2)^3$. The $q$-orthogonal
complement of $i_{k,l,m}(\mathfrak{u}(1))$ $\subseteq
3\mathfrak{u}(1)$ we denote by $2\mathfrak{u}(1)_{k,l,m}$ and the
Lie subgroup of $SU(2)^3$ whose Lie algebra is
$2\mathfrak{u}(1)_{k,l,m}$ by $U(1)^2_{k,l,m}$. The quotient
$SU(2)^3/U(1)^2_{k,l,m}$ is called $Q^{k,l,m}$. Since $U(1)^2$ can
be mapped by a conjugation inside any maximal torus of $SU(2)^3$,
any quotient $SU(2)^3/U(1)^2$ is $SU(2)^3$-equivariantly
diffeomorphic to a $Q^{k,l,m}$.

Our next step is to choose a basis $(e_1,\ldots,e_9)$ of $3\mathfrak{su}(2)$. In order to do this, we first define

\begin{equation}
\sigma_1:=\frac{1}{2}\left(\,\begin{array}{cc}
0 & i\\
i & 0 \\
\end{array}\,\right)\quad
\sigma_2:=\frac{1}{2}\left(\,\begin{array}{cc}
0 & 1 \\
-1 & 0 \\
\end{array}\,\right)\quad
\sigma_3:=\frac{1}{2}\left(\,\begin{array}{cc}
i & 0 \\
0 & -i \\
\end{array}\,\right)\:.
\end{equation}

With this notation, we further define

\begin{equation}
\begin{array}{ll}
e_1:=(\sigma_1,0,0) & e_2:=(\sigma_2,0,0)\\
e_3:=(0,\sigma_1,0) & e_4:=(0,\sigma_2,0)\\
e_5:=(0,0,\sigma_1) & e_6:=(0,0,\sigma_2) \\
e_7:=(k\sigma_3,l\sigma_3,m\sigma_3) & e_8:=(l\sigma_3,-k\sigma_3,0)\\
e_9:=(mk\sigma_3,ml\sigma_3,-(k^2+l^2)\sigma_3) & \\
\end{array}
\end{equation}

The tangent space of $Q^{k,l,m}$ can be identified with the
$q$-orthogonal complement $\mathfrak{m}$ of
$2\mathfrak{u}(1)_{k,l,m}$ in $3\mathfrak{su}(2)$. $\mathfrak{m}$
is spanned by $(e_1,\ldots,e_7)$ and $2\mathfrak{u}(1)_{k,l,m}$ is
spanned by $(e_8,e_9)$. $e_8$ and $e_9$ act on the tangent space
by the isotropy representation of $2\mathfrak{u}(1)_{k,l,m}$. This
action can be described by the commutator of matrices from
$2\mathfrak{u}(1)_{k,l,m}$ and $\mathfrak{m}$. We recall that we
want $Q^{k,l,m}$ to admit an $SU(2)^3$-invariant $G_2$-structure.
According to Lemma \ref{G2HomLemma}, this is the case if and only
if the isotropy representation of $U(1)^2_{k,l,m}$ is equivalent
to the action of a Cartan subalgebra of $G_2$ on
$\text{Im}(\mathbb{O})$. Since $U(1)^2_{k,l,m}$ is connected, it
suffices to consider the action of its Lie algebra. The isotropy
action of $2\mathfrak{u}(1)_{k,l,m}$ on $\mathfrak{m}$ yields the
following subalgebra of $\mathfrak{gl}(\mathfrak{m})$:

\begin{equation}
\left\{\left(\,\begin{array}{ccccccc} \hhline{--~~~~~}
\multicolumn{1}{|c}{0} & \multicolumn{1}{c|}{x} &&&&&\\
\multicolumn{1}{|c}{-x} & \multicolumn{1}{c|}{0} &&&&&\\
\hhline{----~~~} && \multicolumn{1}{|c}{0} &
\multicolumn{1}{c|}{y} &&&\\ && \multicolumn{1}{|c}{-y} &
\multicolumn{1}{c|}{0} &&&\\ \hhline{~~----~} &&&&
\multicolumn{1}{|c}{0} & \multicolumn{1}{c|}{z} &\\ &&&&
\multicolumn{1}{|c}{-z} & \multicolumn{1}{c|}{0} &\\
\hhline{~~~~---} &&&&&& \multicolumn{1}{|c|}{0}\\ \hhline{~~~~~~-}
\end{array}\,\right)\right.
\left|
\begin{array}{l} \\ \\ \\ \\ \\ \\ \\ \\ \end{array}
kx + ly + mz = 0 \right\}\:,
\end{equation}

where the matrix representation is with respect to the basis
$(e_1,\ldots,e_7)$. We equip $\mathbb{R}^7$ with the action of the
Cartan subalgebra

\begin{equation}
\label{CSAG2} \left\{\left(\,\begin{array}{ccccccc}
\hhline{-~~~~~~} \multicolumn{1}{|c|}{0} &&&&&&\\ \hhline{---~~~~}
& \multicolumn{1}{|c}{0} & \multicolumn{1}{c|}{x} &&&&\\ &
\multicolumn{1}{|c}{-x} & \multicolumn{1}{c|}{0} &&&&\\
\hhline{~----~~} &&& \multicolumn{1}{|c}{0} &
\multicolumn{1}{c|}{y} &&\\ &&& \multicolumn{1}{|c}{-y} &
\multicolumn{1}{c|}{0} &&\\ \hhline{~~~----} &&&&&
\multicolumn{1}{|c}{0} & \multicolumn{1}{c|}{z} \\ &&&&&
\multicolumn{1}{|c}{-z} & \multicolumn{1}{c|}{0} \\
\hhline{~~~~~--}
\end{array}\,\right)\right.\left|
\begin{array}{l} \\ \\ \\ \\ \\ \\ \\ \\ \end{array}
x + y - z = 0 \right\}
\end{equation}

of $\mathfrak{g}_2$. By comparing the weights of both actions, we
see that $Q^{k,l,m}$ admits an $SU(2)^3$-invariant $G_2$-structure
if and only if $\{k,l,m\}\in\{-1,1\}$. Without loss of generality,
we assume that $k=l=m=1$.

Our next task is to describe the set of all $SU(2)^3$-invariant
cosymplectic $G_2$-structures on $Q^{1,1,1}$. We split this
problem into three subproblems: First, we classify all
$SU(2)^3$-invariant metrics on $Q^{1,1,1}$, second we classify all
invariant $G_2$-structures whose associated metric and orientation
are fixed ones, and third, we prove which of them are
cosymplectic. For any $SU(2)^3$-invariant metric $g$ on
$Q^{1,1,1}$ there exists a $U(1)^2_{1,1,1}$-equivariant,
$q$-symmetric, positive definite endomorphism $\varphi$ of
$\mathfrak{m}$, which is defined by $q(\varphi(X),Y)=g(X,Y)$.
Conversely, any such $\varphi$ yields an $SU(2)^3$-invariant
metric on $Q^{1,1,1}$. $\mathfrak{m}$ splits into the following
irreducible $2\mathfrak{u}(1)_{1,1,1}$-submodules:

\begin{equation}
\begin{array}{llll}
V_1 & :=\text{span}(e_1,e_2) & V_2 & :=\text{span}(e_3,e_4)\\
V_3 & :=\text{span}(e_5,e_6) & V_4 & :=\text{span}(e_7)\\
\end{array}
\end{equation}

In order to describe the invariant metrics, we have to check if
any pair of the above $2\mathfrak{u}(1)_{1,1,1}$-modules is
equivalent. It is easy to see that $V_1$, $V_2$, and $V_3$ are
pairwise inequivalent, since on any pair of those spaces either
the one-dimensional Lie algebra generated by $e_8$ or the Lie
algebra generated by $e_9$ acts with different weights. $V_4$
cannot be equivalent to one of the other modules, since it is of
lower dimension. We conclude with help of Schur's lemma that the
$SU(2)^3$-invariant metrics $g$ on $Q^{1,1,1}$ are precisely those
which satisfy

\begin{equation}
\label{Q111GenMetric}
\begin{split}
g = & a^2 (e^1\otimes e^1 + e^2\otimes e^2) + b^2(e^3\otimes e^3 + e^4\otimes e^4)\\
\quad & + c^2(e^5\otimes e^5 + e^6\otimes e^6) + f^2 e^7\otimes e^7\\
\end{split}
\end{equation}

with $a,b,c,f\in\mathbb{R}\setminus\{0\}$. The most generic
$G_2$-structure on $Q^{1,1,1}$ can be described conveniently by a
three-form whose coefficients are odd powers of $a$, $b$, $c$, and
$f$. Therefore, we allow those parameters to take negative values
although this does not change the metric. If $g$ is a
cohomogeneity one metric with principal orbit $Q^{1,1,1}$, $a$,
$b$, $c$, and $f$ turn into functions which are defined on the
interval $M/SU(3)$.

Our next step is to describe the set of all homogeneous
$G_2$-structures on $Q^{1,1,1}$ whose extension to an
$SO(7)$-structure is a fixed one. On the following pages,
$2\mathfrak{u}(1)$ will denote the Cartan subalgebra (\ref{CSAG2})
of $\mathfrak{g}_2$. The maximal torus of $G_2$ whose Lie algebra
is $2\mathfrak{u}(1)$ we will denote by $U(1)^2$. As we have
proven in Lemma \ref{G2StrucSpace}, the set we search for is

\begin{equation}
\text{Norm}_{SO(7)}U(1)^2/\text{Norm}_{G_2}U(1)^2\:.
\end{equation}

First, we describe the numerator of the above quotient. The Lie algebra of $\text{Norm}_{SO(7)}U(1)^2$ is

\begin{equation}
\text{Norm}_{\mathfrak{so}(7)}2\mathfrak{u}(1):=\{x\in\mathfrak{so}(7)|
\text{ad}_x(2\mathfrak{u}(1))\subseteq 2\mathfrak{u}(1)\}\:.
\end{equation}

From now on $3\mathfrak{u}(1)$ denotes the Cartan subalgebra

\begin{equation}
\label{CSASO7} \left\{\left(\,\begin{array}{ccccccc}
\hhline{-~~~~~~} \multicolumn{1}{|c|}{0} &&&&&& \\
\hhline{---~~~~} & \multicolumn{1}{|c}{0} & \multicolumn{1}{c|}{a}
&&&&\\ & \multicolumn{1}{|c}{-a} & \multicolumn{1}{c|}{0} &&&&\\
\hhline{~----~~} &&& \multicolumn{1}{|c}{0} &
\multicolumn{1}{c|}{b} &&\\ &&& \multicolumn{1}{|c}{-b} &
\multicolumn{1}{c|}{0} &&\\ \hhline{~~~----} &&&&&
\multicolumn{1}{|c}{0} & \multicolumn{1}{c|}{c} \\ &&&&&
\multicolumn{1}{|c}{-c} & \multicolumn{1}{c|}{0} \\
\hhline{~~~~~--}
\end{array}\,\right)\right.\left|
\begin{array}{c} \\ \\ \\ \\ \\ \\ \\ \\ \end{array}
a,b,c\in\mathbb{R}\right\}
\end{equation}

of $\mathfrak{so}(7)$. Obviously,
$3\mathfrak{u}(1)\subseteq\text{Norm}_{\mathfrak{so}(7)}
2\mathfrak{u}(1)$. We will prove that the other inclusion is also
satisfied. For any $x\in\text{Norm}_{\mathfrak{so}(7)}
2\mathfrak{u}(1)$ we have

\begin{equation}
[x,z]\in 2\mathfrak{u}(1)\quad\forall z\in 2\mathfrak{u}(1)\:.
\end{equation}

Since the Killing form $\kappa$ of $\mathfrak{so}(7)$ is
associative, it follows that for any $y\in 2\mathfrak{u}(1)$

\begin{equation}
\kappa([x,z],y) = \kappa(x,[z,y]) = 0
\end{equation}

and therefore $[x,z]=0$. This observation proves that the
normalizer equals the centralizer

\begin{equation}
C_{\mathfrak{so}(7)}\:2\mathfrak{u}(1):=\{x\in\mathfrak{so}(7)|\text{ad}_x(
2\mathfrak{u}(1)) = \{0\}\}\:.
\end{equation}

For the following considerations, we complexify the Lie algebras $\mathfrak{so}(7)$ and $2\mathfrak{u}(1)$ and return to the
real case later on. $x$ has a Cartan decomposition

\begin{equation}
\label{CartanDecomp}
x=x_h+\sum\limits_{\alpha\in\Phi}\mu_{\alpha}g_{\alpha}\quad\text{with}\quad
x_h\in 3\mathfrak{u}(1) \otimes \mathbb{C}\:,\mu_{\alpha}\in
\mathbb{C} \:,g_{\alpha}\in L_{\alpha}\:,
\end{equation}

where $\Phi$ is the root system of $\mathfrak{so}(7,\mathbb{C})$ and $L_{\alpha}$ is the eigenspace to the eigenvalue
$\alpha:3\mathfrak{u}(1)\otimes \mathbb{C}\rightarrow\mathbb{C}$ of $\text{ad}_{3\mathfrak{u}(1)\otimes\mathbb{C}}$. Let $z\in
2\mathfrak{u}(1)\otimes\mathbb{C}$ be arbitrary. Applying $\text{ad}_z$ to (\ref{CartanDecomp}) yields the following equation:

\begin{equation}
\text{ad}_z(x) = \sum\limits_{\alpha\in\Phi}
\mu_{\alpha}\alpha(z)g_{\alpha}\:.
\end{equation}

We want to prove that $x$ cannot be a non-zero element of the
orthogonal complement of $3\mathfrak{u}(1)\otimes\mathbb{C}$. If
there exists an $\alpha\in\Phi$ with $\alpha(2\mathfrak{u}(1)
\otimes \mathbb{C})=0$, then $[z,g_{\alpha}]=\alpha(z)g_{\alpha}
=0$ for all $z\in 2\mathfrak{u}(1)\otimes \mathbb{C}$. In that
situation, we could choose $x$ as $g_{\alpha}$. Conversely, we
assume that there is no such $\alpha$ and choose
$(3\mathfrak{u}(1) \otimes \mathbb{C})^\perp \setminus\{0\}\ni x=
\sum_{\alpha\in\Phi}\mu_{\alpha}g_{\alpha}$ arbitrarily. If we
choose a $z$ with $\alpha(z)\neq 0$ for an $\alpha$ with
$\mu_{\alpha}\neq 0$, we have $\text{ad}_z(x)\neq 0$. In that
case, $\text{Norm}_{\mathfrak{so}(7,\mathbb{C})}2\mathfrak{u}(1)
\otimes \mathbb{C}$ would be $3\mathfrak{u}(1)\otimes\mathbb{C}$.
We therefore have to answer the question if there is an $\alpha$
with $\alpha(2\mathfrak{u}(1) \otimes \mathbb{C})=0$.

In order to do this, we have to take a closer look at the root
system of $\mathfrak{so}(7,\mathbb{C})$. Let $L_1$ be the element
of the Cartan subalgebra (\ref{CSASO7}) with $a=1$ and $b=c=0$.
Analogously, let $L_2$ be given by $b=1$, $a=c=0$, and $L_3$ by
$c=1$ and $a=b=0$. We denote the dual of $L_j$ with respect to the
Killing form by $\theta_j$. The root system of
$\mathfrak{so}(7,\mathbb{C})$ is:

\begin{equation}
\{\pm \theta_j|1\leq j \leq 3\}\cup\{\pm \theta_j \pm
\theta_k|1\leq j<k\leq 3 \}\:.
\end{equation}

The Cartan subalgebra $2\mathfrak{u}(1) \otimes \mathbb{C}$ of $\mathfrak{g}_2^{\mathbb{C}}$ is the plane which is orthogonal
to $L_1+L_2-L_3$. The $\alpha\in\Phi$ which vanish on $2\mathfrak{u}(1) \otimes \mathbb{C}$ are precisely those which are
multiples of $\theta_1+\theta_2-\theta_3$. Since there is no root of $\mathfrak{so}(7,\mathbb{C})$ with this property, we have
proven that indeed $\text{Norm}_{\mathfrak{so}(7,\mathbb{C})}(2\mathfrak{u}(1) \otimes \mathbb{C})=3\mathfrak{u}(1) \otimes
\mathbb{C}$. By passing to the compact real form of $\mathfrak{so}(7,\mathbb{C})$, we can conclude that
$\text{Norm}_{\mathfrak{so}(7)} 2\mathfrak{u}(1) = 3\mathfrak{u}(1)$.

Let $U(1)^3$ be the maximal torus of $SO(7)$ with Lie algebra $3\mathfrak{u}(1)$. Our next step is to describe the discrete
group $\Gamma:=(\text{Norm}_{SO(7)}  U(1)^2)/U(1)^3$. The group $(\text{Norm}_{SO(7)}U(1)^3)/U(1)^3$ is isomorphic to the Weyl
group $\mathcal{W}_{\mathfrak{so(7)}}$ of $\mathfrak{so}(7)$. We prove that $\text{Norm}_{SO(7)}U(1)^2 \subseteq
\text{Norm}_{SO(7)}U(1)^3$. Let us assume that there is an element $h$ of $SO(7)$ such that $\text{Ad}_h$ leaves
$2\mathfrak{u}(1)$ invariant, but does not leave $3\mathfrak{u}(1)$ invariant. Then $3\mathfrak{u}(1)$ and
$\text{Ad}_h(3\mathfrak{u}(1))$ are two distinct Cartan subalgebras whose intersection is $2\mathfrak{u}(1)$. In this
situation, the centralizer of $2\mathfrak{u}(1)$ is at least four-dimensional which is not the case. $\Gamma$ therefore is a
subgroup of $\mathcal{W}_{\mathfrak{so(7)}}$. More precisely, it is the subgroup of $\mathcal{W}_{\mathfrak{so(7)}}$ which
leaves the plane $2\mathfrak{u}(1)\subseteq 3\mathfrak{u}(1)$ invariant.

In order to describe $\Gamma$ explicitly, we introduce some facts on $\mathcal{W}_{\mathfrak{so(7)}}$. The Weyl group of
$\mathfrak{so}(7)$ is isomorphic to $\mathbb{Z}_2^3\rtimes S_3$, which is of order $48$. The first factor of
$\mathcal{W}_{\mathfrak{so}(7)}$ acts by changing the signs of the $\theta_i$. The second factor of the Weyl group consists of
the permutations of $\{\theta_1,\theta_2,\theta_3\}$. $2\mathfrak{u}(1)$ is the plane of all $x\in 3\mathfrak{u}(1)$ satisfying:

\begin{equation}
\theta_1(x)+\theta_2(x)-\theta_3(x)=0\:.
\end{equation}

By replacing $L_3$ by $-L_3$, we can change this equation into:

\begin{equation}
\theta_1(x)+\theta_2(x)+\theta_3(x)=0\:.
\end{equation}

The subgroup of $\mathcal{W}_{\mathfrak{so}(7)}$ which leaves this
equation invariant is generated by the permutations and the
simultaneous change of all signs. $\Gamma$ thus is the direct
product $\mathbb{Z}_2\times S_3$, which is isomorphic to the
Dieder group $D_6$. All in all, we have proven:

\begin{equation}
\text{Norm}_{SO(7)}U(1)^2=U(1)^3\rtimes D_6\:.
\end{equation}

Next, we have to determine $\text{Norm}_{G_2}U(1)^2$. Since
$2\mathfrak{u}(1)$ is a Cartan subalgebra of $\mathfrak{g}_2$,
$\text{Norm}_{G_2} U(1)^2 / U(1)^2$ is the Weyl group
$\mathcal{W}_{\mathfrak{g}_2}$ of $\mathfrak{g}_2$. It is known
that $\mathcal{W}_{\mathfrak{g}_2}$ is isomorphic to $D_6$, too.
$\text{Norm}_{G_2}U(1)^2$ therefore is a semidirect product
$U(1)^2\rtimes D_6$. There is the following exact sequence:

\begin{equation}
\begin{split}
\pi_0(\text{Norm}_{G_2}U(1)^2) & \overset{\pi_0(i)}{\longrightarrow} \pi_0(\text{Norm}_{SO(7)}U(1)^2) \\
& \overset{\pi_0(\pi)}{\longrightarrow} \pi_0( \text{Norm}_{SO(7)}U(1)^2/\text{Norm}_{G_2} U(1)^2)\rightarrow\{0\}\:,
\end{split}
\end{equation}

where $\pi_0(i)$ is induced by the inclusion of
$\text{Norm}_{G_2}U(1)^2$ into $\text{Norm}_{SO(7)}U(1)^2$ and
$\pi_0(\pi)$ by the projection map. $\text{Norm}_{G_2}U(1)^2$ and
$\text{Norm}_{SO(7)}U(1)^2$ have both 12 connected components. If
we were able to prove that $\pi_0(i)$ is surjective, we could
conclude that $\text{Norm}_{SO(7)}U(1)^2/\text{Norm}_{G_2} U(1)^2$
is connected. Since $\pi_0(\text{Norm}_{SO(7)}U(1)^2)$ and
$\pi_0(\text{Norm}_{G_2}U(1)^2)$ are both finite, we can prove the
injectivity instead. Let $x\in\text{Norm}_{G_2}U(1)^2$ with
$x\notin U(1)^2$ be arbitrary. Then $x=\alpha x_0$ with
$\alpha\in\mathcal{W}_{\mathfrak{g}_2}\setminus\{e\}$ and $x_0\in
U(1)^2$. Since $\alpha$ acts non-trivially on the dual of the
Cartan subalgebra $2\mathfrak{u}(1)$, it cannot be an element of
the maximal torus $U(1)^3$ of $SO(7)$. Therefore, $x$ is not an
element of the identity component of $\text{Norm}_{SO(7)}U(1)^2$
and $\pi_0(i)$ is thus injective. All in all, we have proven that
$\text{Norm}_{SO(7)}U(1)^2 / \text{Norm}_{G_2}U(1)^2$ is
connected. More precisely, it is a group which is isomorphic to
$U(1)$.

For our considerations we need to describe the action of
$\text{Norm}_{SO(7)}U(1)^2/$ $\text{Norm}_{G_2}U(1)^2$ on
$\mathfrak{m}$. We define

\begin{equation}
T:=\left\{\left(\,\begin{array}{cccc} \hhline{-~~~}
\multicolumn{1}{|c|}{R_{\theta}} &&&\\ \hhline{--~~} &
\multicolumn{1}{|c|}{R_{\theta}} && \\ \hhline{~--~} &&
\multicolumn{1}{|c|}{R_{\theta}} & \\ \hhline{~~--} &&&
\multicolumn{1}{|c|}{1} \\ \hhline{~~~-}
\end{array}\,\right)=: T_{\theta}\right.\:
\left|
\begin{array}{c} \\ \\ \\ \\ \end{array}\!
\theta\in\mathbb{R}\right\} \:,
\end{equation}

where $R_{\theta}$ denotes the rotation in the plane around the
angle $\theta$ and the matrix representation of $T_{\theta}$ is
with respect to the basis $(e_1,\ldots,e_7)$ of $\mathfrak{m}$.
$T$ commutes with the action of $U(1)^2_{1,1,1}$ and the
intersection $T\cap U(1)^2_{1,1,1}$ is discrete. Furthermore,
$T_{\theta}$ is orthogonal with respect to $g$ and orientation
preserving. Since $U(1)^2_{1,1,1}$ preserves any
$SU(2)^3$-invariant $G_2$-structure $\omega$ and $\text{rank}\:
G_2=2$, the action of $T$ cannot preserve $\omega$. All in all,
the set of all invariant $G_2$-structures with the same associated
metric and orientation as $\omega$ is generated by $T$. Let $S$ be
the following subgroup of $\text{Norm}_{SU(2)^3}U(1)^2$:

\begin{equation}
\label{DefSQ111}
\left\{\left(\left(
\begin{array}{cc}
e^{i\tfrac{\theta}{2}} & 0 \\
0 & e^{-i\tfrac{\theta}{2}} \\
\end{array}\right), \left(
\begin{array}{cc}
e^{i\tfrac{\theta}{2}} & 0 \\
0 & e^{-i\tfrac{\theta}{2}} \\
\end{array}\right), \left(
\begin{array}{cc}
e^{i\tfrac{\theta}{2}} & 0 \\
0 & e^{-i\tfrac{\theta}{2}} \\
\end{array}\right)\right)=:S_{\theta}\right.\:
\left|
\begin{array}{c} \\ \\ \\ \end{array}\!
\theta\in\mathbb{R}\right\}
\end{equation}

Conjugation by $S_{\theta}$ is a well-defined diffeomorphism of
$Q^{1,1,1}$. Moreover, it is an orientation preserving isometry.
Its differential acts as $T_{\theta}$ on $\mathfrak{m}$. Our set
of $G_2$-structures can therefore be obtained by an isometric
action of $S$ on $\omega$. The simultaneous action of an
$S_{\theta}$ on all principal orbits can be extended to an
isometry $\Phi_{\theta}$ of the cohomogeneity one manifold $M$. We
assume that $d\ast\omega=0$ and denote the extension of $\omega$
to a parallel $\text{Spin}(7)$-structure by $\Omega$. Since
$\Phi^\ast_{\theta}\Omega$ is parallel if $\Omega$ is parallel, it
suffices to consider only one cosymplectic $\omega$ on the
principal orbit instead of the whole one-parameter family
generated by $S$.

We fix an arbitrary $2\mathfrak{u}(1)_{1,1,1}$-invariant metric
$g$ on $\mathfrak{m}$, which has to be of type
(\ref{Q111GenMetric}). Furthermore, we assume that
$g(\tfrac{\partial}{\partial t},\tfrac{\partial}{\partial t})=1$.
Our aim is to construct a basis $(f_i)_{0\leq i\leq 7}$ of the
tangent space
$\mathfrak{m}\oplus\text{span}(\tfrac{\partial}{\partial t})$
which yields an $SU(2)^3$-invariant $\text{Spin}(7)$-structure
with $g$ as associated metric. We can change the orientation of
$M$ by replacing $t$ by $-t$. Therefore, we do not have to take
care of orientation issues. Let $\psi: \text{Im}(\mathbb{O})
\rightarrow \mathfrak{m}$ be the isomorphism which maps the
$G_2$-structure on $\text{Im}(\mathbb{O})$ to the tangent space of
$Q^{1,1,1}$. $\psi$ has to preserve the inner product of both
spaces. Moreover, it has to turn the Cartan subalgebra
(\ref{CSAG2}) into the isotropy representation of
$2\mathfrak{u}(1)_{1,1,1}$. If all these conditions are satisfied,
$(f_i)_{1\leq i\leq 7}$ can be chosen as
$(\psi(i),\ldots,\psi(k\epsilon))$. By choosing
$f_0:=\tfrac{\partial}{\partial t}$, we obtain a cohomogeneity one
$\text{Spin}(7)$-structure. A possible basis of this kind is:

\begin{equation}
\label{CBasisQ111}
\begin{array}{llll}
f_0:=\frac{\partial}{\partial t} & f_1:=\frac{1}{f}e_7 & f_2:=\frac{1}{a}e_1 & f_3:=\frac{1}{a}e_2\\
&&&\\
f_4:=\frac{1}{b}e_3 & f_5:=\frac{1}{b}e_4 & f_6:=\frac{1}{c}e_6 & f_7:=\frac{1}{c}e_5\\
\end{array}
\end{equation}

Since the group $S$ acts isometrically and transitively on the set
of all $G_2$-structures with a fixed extension to an
$SO(7)$-structure, this is the most general basis which we have to
consider. The basis (\ref{CBasisQ111}) yields the following
$\text{Spin}(7)$-structure:

\begin{equation}
\begin{split}
\Omega & =
abcf\:e^{1357}-abcf\:e^{1467}-abcf\:e^{2367}-abcf\:e^{2457}\\
&\quad -a^2b^2\:e^{1234}-a^2c^2\:e^{1256}-b^2c^2\:e^{3456}\\
&\quad -a^2f\:e^{127}\wedge dt-b^2f\:e^{347}\wedge dt
-c^2f\:e^{567}\wedge dt\\ &\quad -abc\:e^{136}\wedge
dt-abc\:e^{145}\wedge dt-abc\:e^{235}\wedge dt +abc\:e^{246}\wedge
dt\\
\end{split}
\end{equation}

Let $X$ and $Y$ be left-invariant vector fields on $SU(2)^3$ and
$\alpha$ be a left-invariant one-form. We have $d\alpha(X,Y) =
-\alpha([X,Y])$. With help of the anti-derivation property of $d$
we can compute the exterior derivatives of the pull-backs
$\pi^\ast\omega$ and $\pi^\ast\ast\omega$ of $\omega$ and
$\ast\omega$ to $SU(2)^3$. This enables us to calculate $d\Omega$.
We finally see that $d\Omega=0$ is equivalent to:

\begin{equation}
\label{HolRedQ111}
\begin{split}
a' & =-\frac{1}{6}\frac{f}{a}\\ b' & =-\frac{1}{6}\frac{f}{b}\\ c'
& =-\frac{1}{6}\frac{f}{c}\\ f' &
=\frac{1}{6}\frac{f^2}{a^2}+\frac{1}{6}\frac{f^2}{b^2}
+\frac{1}{6}\frac{f^2}{c^2}-3\\
\end{split}
\end{equation}

Our next step is to solve the above system. The initial conditions for (\ref{HolRedQ111}) shall be $a(0)=a_0$, $b(0)=b_0$,
$c(0)=c_0$, and $f(0)=f_0$. Since we have:

\begin{equation}
a'a=b'b=c'c=-\frac{1}{6}f\:,
\end{equation}

the functions $a^2$, $b^2$, and $c^2$ are up to a constant summand the same. We introduce a function $F$ with $F'=f$. By
requiring $F(0)=0$, we make $F$ unique. From the above equation, it follows that

\begin{equation}
\label{SolutionfQ1111}
a^2-a_0^2=b^2-b_0^2=c^2-c_0^2=-\frac{1}{3}F\:.
\end{equation}

We rewrite the fourth equation of (\ref{HolRedQ111}):

\begin{equation}
f' =-\frac{1}{6}\frac{f^2}{\frac{1}{3}F-a_0^2}
-\frac{1}{6}\frac{f^2}{\frac{1}{3}F-b_0^2}
-\frac{1}{6}\frac{f^2}{\frac{1}{3}F-c_0^2}-3\:.\\
\end{equation}

Since there is only one principal orbit, $f$ does not change its sign and $F$ is injective. The function $\widetilde{f}$ with
$\widetilde{f}(t) := f(F^{-1}(t))$ satisfies the equation:

\begin{equation}
\widetilde{f}\widetilde{f}' = -\frac{\widetilde{f}^2}{2t-6a_0^2} - \frac{\widetilde{f}^2}{2t-6b_0^2} -\frac{
\widetilde{f}^2}{2t-6c_0^2}-3\:.
\end{equation}

This equation is linear in $f^2$ and can be solved explicitly. By variation of constants, we obtain the following solution of
our initial value problem:

\begin{equation}
\label{SolutionfQ1112}
\begin{split}
\widetilde{f}(t)^2 &  =
\frac{f_0^2}{(1-\tfrac{t}{3a_0^2})(1-\tfrac{t}{3b_0^2})(
1-\tfrac{t}{3c_0^2})}\\ &
-\frac{3}{(t-3a_0^2)(t-3b_0^2)(t-3c_0^2)}\int_0^t
(s-3a_0^2)(s-3b_0^2)(s-3c_0^2)ds\:.\\
\end{split}
\end{equation}

The equations (\ref{SolutionfQ1111}) and (\ref{SolutionfQ1112})
describe the metric completely. Since we are in the lucky
situation to have explicit solutions of the equations of the
holonomy reduction, we are able to describe the global shape of
the metric. The function $\widetilde{f}$ is of type $c\sqrt{t} +
O(1)$ for large values of $t$ and a
$c\in\mathbb{R}\setminus\{0\}$. We insert the definition of
$\widetilde{f}$ and obtain:


\begin{equation}
f(t) = c\sqrt{F(t)} + O(1)\:.
\end{equation}

From this equation we can deduce that $f(t)=\tfrac{c^2}{2}t +
O(1)$ and from (\ref{SolutionfQ1111}) it follows that $a$, $b$,
and $c$ approach linear functions, too. This proves that the metric is
asymptotically conical and in particular complete. We make the
ansatz $a(t)=a_1t$, $\ldots$, $f(t)=f_1t$ and obtain
$a_1^2,b_1^2,c_1^2=\tfrac{1}{8}$ and $f_1=\tfrac{3}{4}$. These
numbers determine the metric on the base of the cone which our
cohomogeneity one metric approaches. Since the cone has holonomy
$\text{Spin}(7)$, $(a_1,b_1,c_1,f_1)$ describes a nearly parallel
$G_2$-structure on $Q^{1,1,1}$.


We now determine the holonomy of our metrics. The diffeomorphisms $\Phi^\ast_{\theta}$ preserve the metric and orientation but
not the $\text{Spin}(7)$-structure. The set of all $\text{Spin}(7)$-structures which we obtain by the action of $S$ is
diffeomorphic to a circle. According to Lemma \ref{SU4Lemma}, the holonomy is contained in $SU(4)$. If the holonomy is not all
of $SU(4)$, it is either a subgroup of $SU(3)$ or of $Sp(2)$. In the first case, there exists a parallel vector field $X$ on
$M$. The holonomy bundle is invariant under the isometry group of $M$. We can therefore assume that $X$ is
$SU(2)^3$-invariant. This is the case if and only if $X=c_1 \tfrac{1}{f} e_7 + c_2\tfrac{\partial}{\partial t}$, where $c_1$
and $c_2$ depend on $t$ only. Since the length of $X$ is constant and $\nabla_{\tfrac{\partial}{\partial t}}X=0$, $c_1$ and
$c_2$ have to be constant, too. If $c_1\neq 0$, $f$ is also a constant non-zero function. It follows from
(\ref{SolutionfQ1111}), that $a^2$, $b^2$, and $c^2$ are either all strictly increasing or strictly decreasing. In any
case, the right hand side of the fourth equation of (\ref{HolRedQ111}) cannot vanish. This is a contradiction to $f$ being
constant. If $X$ was a multiple of $\tfrac{\partial}{\partial t}$, we would have $a'=b'=c'=f'=0$, which is impossible, too.

If the holonomy was a subgroup of $Sp(2)$, there would exist three linearly independent K\"ahler forms on $M$. For similar
reasons as above, any K\"ahler form $\eta$ on $M$ has to be $SU(2)^3$-invariant. The two-forms

\begin{equation}
e^{12}\:,\quad e^{34}\:,\quad e^{56}\:,\quad e^7\wedge dt
\end{equation}

span the space of all $SU(2)^3$-invariant two-forms on $M$. The K\"ahler-form thus has to satisfy:

\begin{equation}
\eta = \epsilon_1 a^2 e^{12} + \epsilon_2 b^2 e^{34} + \epsilon_3
c^2 e^{56} + \epsilon_4 f e^7\wedge dt\quad\text{with}\quad
\epsilon_1,\epsilon_2,\epsilon_3,\epsilon_4\in\{-1,1\} \:.
\end{equation}

For the exterior derivative of $\eta$, we obtain:

\begin{equation}
\begin{split}
d\eta & =\frac{1}{3}\epsilon_4 f\:e^{12}\wedge dt+\epsilon_1 2a'a\:e^{12}\wedge dt + \frac{1}{3}\epsilon_4 f\:e^{34}\wedge dt
+\epsilon_2 2b'b\:e^{34}\wedge dt\\ &\quad +\frac{1}{3}\epsilon_4 f\:e^{56}\wedge dt+\epsilon_3 2c'c\:e^{56}\wedge dt\:.\\
\end{split}
\end{equation}

$d\eta=0$ yields the following equations:

\begin{equation}
\label{EtaEquationsQ111} \epsilon_4 f=-6\epsilon_1 a'a=-6\epsilon_2
b'b=-6\epsilon_3 c'c\:.
\end{equation}

By comparing (\ref{EtaEquationsQ111}) with the system (\ref{HolRedQ111}), we see that $\epsilon_1 = \epsilon_2 = \epsilon_3 =
\epsilon_4 = \pm 1$ and thus:

\begin{equation}
\label{EtaQ111} \eta = \pm (a^2 e^{12} + b^2 e^{34} + c^2 e^{56} + f
e^7\wedge dt)\:.
\end{equation}

Since there are only two linearly dependent K\"ahler forms on $M$, the holonomy is all of $SU(4)$. We finally investigate the
behaviour of the metric near the singular orbit. The possible singular orbits of a cohomogeneity one manifold with principal
orbit $Q^{1,1,1}$ are classified by the following lemma:

\begin{Le} \label{SingOrbQ111} Let $U(1)^2_{1,1,1}\subseteq SU(2)^3$ be chosen as at the beginning of this section. For reasons
of convenience, we denote this subgroup by $U(1)^2$ and its Lie algebra by $2\mathfrak{u}(1)$. Furthermore, let $K$ with
$U(1)^2\subsetneq K\subseteq SU(2)^3$ be a closed, connected subgroup. We denote the Lie algebra of $K$ by $\mathfrak{k}$. In
this situation, $\mathfrak{k}$ and $K$ can be found in the table below. Furthermore, $K/U(1)^2$ and $SU(2)^3/K$ satisfy the
following topological conditions:

\begin{center}
\begin{tabular}{|l|l|l|l|}
\hline

$\mathfrak{k}$ & $K$ & $K/U(1)^2$ & $SU(2)^3/K$\\

\hline \hline

$3\mathfrak{u}(1)$ & $U(1)^3$ & $\cong S^1$ & $\cong S^2\times
S^2\times S^2$\\

\hline

$2\mathfrak{u}(1)\oplus \mathfrak{su}(2)$ & $U(1)^2\times SU(2)$ &
$\cong S^3$ & $\cong S^2\times S^2$\\

\hline

$\mathfrak{u}(1)\oplus 2\mathfrak{su}(2)$ & $U(1)\times SU(2)^2$ &
$\not\cong S^5/\Gamma$ & $\cong S^2$\\

\hline

$3\mathfrak{su}(2)$ & $SU(2)^3$ & $=Q^{1,1,1}\not\cong S^7/\Gamma$
& \\

\hline
\end{tabular}
\end{center}

In the above table, $\Gamma$ denotes an arbitrary discrete subgroup of $O(6)$ or $O(8)$.
\end{Le}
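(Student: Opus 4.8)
The plan is to classify all closed connected subgroups $K$ with $U(1)^2 \subsetneq K \subseteq SU(2)^3$ by passing to Lie algebras and using the constraint that, by Theorem \ref{Most}, $K/U(1)^2$ must be diffeomorphic to a sphere. First I would observe that $\dim K \geq 3$, so $\mathfrak{k}$ strictly contains the Cartan subalgebra $2\mathfrak{u}(1)$ of $3\mathfrak{su}(2)$. Since $\mathfrak{k}$ is a subalgebra of $3\mathfrak{su}(2) = \mathfrak{su}(2) \oplus \mathfrak{su}(2) \oplus \mathfrak{su}(2)$ containing a maximal torus, its projection to each $\mathfrak{su}(2)$ factor is either the corresponding $\mathfrak{u}(1)$ or all of $\mathfrak{su}(2)$; a short argument with the root spaces $e_1, e_2$ (resp. $e_3,e_4$ and $e_5,e_6$) shows that if $\mathfrak{k}$ contains one root vector of a factor it contains the whole $\mathfrak{su}(2)$ of that factor, because the $2\mathfrak{u}(1)_{1,1,1}$-action rotates within $V_i = \text{span}(e_{2i-1},e_{2i})$. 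Hence $\mathfrak{k}$ is a direct sum: it is $2\mathfrak{u}(1)$ plus $\mathfrak{su}(2)$'s on some subset of the three factors — but one must also check which copies of $\mathfrak{su}(2)$ are actually compatible with containing the fixed $2\mathfrak{u}(1)_{1,1,1}$ (only the "diagonal-compatible" ones, i.e. the standard $\mathfrak{su}(2)$ in a factor whose Cartan line lies in $3\mathfrak{u}(1)$). This yields exactly the four possibilities $3\mathfrak{u}(1)$, $2\mathfrak{u}(1)\oplus\mathfrak{su}(2)$, $\mathfrak{u}(1)\oplus 2\mathfrak{su}(2)$, $3\mathfrak{su}(2)$ listed in the table, up to the permutation of factors already fixed at the start of the section.

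Next I would identify the corresponding groups $K$ and the homogeneous spaces. For each $\mathfrak{k}$, $K$ is the connected subgroup it generates; since $U(1)^2$ sits inside a maximal torus and each $\mathfrak{su}(2)$-summand exponentiates to an $SU(2)$ containing the relevant circle, one reads off $K = U(1)^3$, $U(1)^2\times SU(2)$, $U(1)\times SU(2)^2$, $SU(2)^3$ respectively. The quotients $SU(2)^3/K$ are then computed factor-by-factor using $SU(2)/U(1)\cong S^2$ and $SU(2)/SU(2) = \{pt\}$: one factor of $S^2$ for each $\mathfrak{su}(2)$ \emph{not} contained in $\mathfrak{k}$, giving $S^2\times S^2\times S^2$, $S^2\times S^2$, $S^2$, and a point. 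Similarly $K/U(1)^2$ is computed: $U(1)^3/U(1)^2 \cong S^1$, $(U(1)^2\times SU(2))/U(1)^2 \cong SU(2)\cong S^3$ (since the $U(1)^2$ meets the $SU(2)$ only in its center, which is absorbed), and in the remaining two cases $K/U(1)^2$ is a five- (resp. seven-) dimensional manifold.

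The last — and, I expect, the most delicate — step is to verify the stated topological non-conditions: $K/U(1)^2 \not\cong S^5/\Gamma$ when $\mathfrak{k} = \mathfrak{u}(1)\oplus 2\mathfrak{su}(2)$, and $Q^{1,1,1}\not\cong S^7/\Gamma$, for $\Gamma$ a discrete subgroup of $O(6)$ resp. $O(8)$. These are needed later to rule out smooth (orbifold) collapse at those orbits. For $Q^{1,1,1}$ the cleanest route is a cohomology or homotopy obstruction: $Q^{1,1,1}$ fibers as $T^2 \hookrightarrow Q^{1,1,1} \to \mathbb{CP}^1\times\mathbb{CP}^1\times\mathbb{CP}^1$, so $H^2(Q^{1,1,1};\mathbb{R})$ is $2$-dimensional (it has $b_2 = 2$, by the Gysin/Leray spectral sequence of this torus bundle), whereas any spherical space form $S^7/\Gamma$ has $H^2(\,\cdot\,;\mathbb{R}) = 0$; alternatively $\pi_2(Q^{1,1,1})\cong\mathbb{Z}^2\neq 0$ while $\pi_2(S^7/\Gamma) = 0$. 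For the $\mathfrak{u}(1)\oplus 2\mathfrak{su}(2)$ case, $K/U(1)^2 \cong (SU(2)^2\times U(1))/U(1)^2$ is a circle bundle over $S^2\times S^3$ (or one analyzes it as $SU(2)\times (SU(2)\times U(1))/U(1)^2$), which again has $b_2 \neq 0$ — in particular $H^2 \neq 0$ — so it cannot be a quotient of $S^5$ by a finite group. The main obstacle is getting these Betti numbers (or fundamental/second homotopy groups) right, i.e.\ carefully running the relevant spectral sequences for the torus bundles; everything else is bookkeeping with subalgebras of $3\mathfrak{su}(2)$ and $SU(2)$-quotients.
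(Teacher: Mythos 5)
Your overall strategy matches the paper's: classify $\mathfrak{k}$ by using that it is a $2\mathfrak{u}(1)$-module, that the nontrivial isotypic pieces $V_1,V_2,V_3$ are pairwise inequivalent, and that bracket closure forces $e_7\in\mathfrak{k}$ once any $V_i$ is present; then read off $K$, compute $SU(2)^3/K$ factorwise from $SU(2)/U(1)\cong S^2$, and identify $K/U(1)^2$ in the first two rows. Where you genuinely diverge is in the two ``not a spherical quotient'' entries: the paper runs long exact homotopy sequences (for the third row, $\pi_3(S^1)\to\pi_3(S^3\times S^3)\to\pi_3(S^5/\Gamma)$ using the circle bundle $S^3\times S^3\to K/U(1)^2$; for $Q^{1,1,1}$, $\pi_2(S^7/\Gamma)\to\pi_2((S^2)^3)\to\pi_1(S^1)\to\pi_1(S^7/\Gamma)$), whereas you propose a $b_2$/$\pi_2$ obstruction. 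That route is legitimate, and the transfer isomorphism $H^*(S^n/\Gamma;\mathbb{Q})\cong H^*(S^n;\mathbb{Q})^\Gamma$ has the mild advantage of not needing the $\Gamma$-action to be free, which fits the orbifold setting of the lemma better than a covering-space argument.

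Two concrete slips should be repaired. First, your fibration descriptions are dimensionally off: $Q^{1,1,1}$ is a \emph{circle} bundle (not a $T^2$-bundle) over $S^2\times S^2\times S^2$, and in the third row $K/U(1)^2\cong SU(2)^2/U(1)$ is itself a five-manifold diffeomorphic to $S^2\times S^3$ (equivalently an $S^1$-bundle over $S^2\times S^2$), not an $S^1$-bundle over $S^2\times S^3$. The conclusions you want ($b_2(Q^{1,1,1})=2$, respectively $b_2=1\neq 0$, or $\pi_2\cong\mathbb{Z}^2$ resp. $\mathbb{Z}$ from $U(1)^2\to SU(2)^3\to Q^{1,1,1}$ and $U(1)\to SU(2)^2\to K/U(1)^2$) are correct once the fibrations are stated correctly. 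Second, your justification that $(SU(2)\times U(1)^2)/U(1)^2\cong S^3$ --- ``the $U(1)^2$ meets the $SU(2)$ only in its center, which is absorbed'' --- does not give $S^3$: if the intersection with the semisimple factor really were the center $\{\pm 1\}$, the quotient would be $SO(3)\cong\mathbb{RP}^3$. In fact the intersection of $U(1)^2_{1,1,1}=\{(Q_{-\phi-\psi},Q_\phi,Q_\psi)\}$ with $SU(2)\times\{e\}\times\{e\}$ is trivial, and it is precisely this triviality (the paper's Lemma \ref{U1Kuerzen}) that yields $K/U(1)^2\cong SU(2)\cong S^3$; the distinction between $S^3$ and $\mathbb{RP}^3$ matters later for the smoothness discussion at the singular orbit $S^2\times S^2$, so this step needs the explicit computation rather than the appeal to the center.
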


The following fact will help us to prove the lemma:

\begin{Le} \label{U1Kuerzen}
Let $G$ be a Lie group and $H$ be a closed subgroup of $G$. Moreover, let $T\subseteq G\times U(1)$ be isomorphic to $U(1)$
such that $T\not\subseteq G$, $T\cap H = \{e\}$, and $T$ and $H$ commute. In this situation, $G/H$ is a $|\Gamma|$-fold
covering of $(G\times U(1))/(H\times T)$, where $\Gamma=T\cap G$.
\end{Le}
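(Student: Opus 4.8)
The plan is to show that $(G\times U(1))/(H\times T)$ is $G$-equivariantly diffeomorphic to $G/(H\Gamma)$ and that the canonical projection $G/H\to G/(H\Gamma)$ is a covering of degree $|\Gamma|$. Throughout I identify $G$ with the subgroup $G\times\{1\}$ of $G\times U(1)$, so that $\Gamma=T\cap G$ makes literal sense.

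First I would pin down $\Gamma$. Let $p\colon G\times U(1)\to U(1)$ be the second projection. Its restriction to $T$ is a continuous homomorphism whose image is a connected compact subgroup of $U(1)$ (compact since $T\cong U(1)$), hence trivial or all of $U(1)$; it cannot be trivial, for that would force $T\subseteq G\times\{1\}$, contradicting $T\not\subseteq G$. So $p|_T$ is onto with kernel $T\cap(G\times\{1\})=\Gamma$; thus $\Gamma$ is a finite cyclic group, $T/\Gamma\cong U(1)$, and $n:=|\Gamma|$ is finite. Since $\Gamma\subseteq T$ and $T$ commutes with $H$, the set $H\Gamma=\Gamma H$ is a subgroup, and being a finite union of translates of the closed subgroup $H$ it is itself closed; moreover $\Gamma\cap H\subseteq T\cap H=\{e\}$, so $H\Gamma$ is the disjoint union of $n$ cosets of $H$, i.e. $[H\Gamma:H]=n$. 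Likewise $H\times T:=\{(h,1)\,t\mid h\in H,\ t\in T\}$ is a subgroup of $G\times U(1)$, and it is closed, being the product of the closed subgroup $H\times\{1\}$ with the compact subgroup $T$; hence all quotients below are smooth manifolds.

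Next I would consider the $G$-equivariant map $\iota\colon G/H\to (G\times U(1))/(H\times T)$, $gH\mapsto(g,1)(H\times T)$, which is well defined because $H\times\{1\}\subseteq H\times T$. It is surjective: given a coset $(g,z)(H\times T)$, choose $t\in T$ with $p(t)=z$ (possible by the previous step); then $(g,z)t^{-1}\in G\times\{1\}$ represents the same coset, so the coset lies in the image of $\iota$. To see when $(g_1,1)$ and $(g_2,1)$ give the same coset, note that a typical element of $H\times T$ is $(h,1)\,t$ with $h\in H$, $t\in T$, and has $U(1)$-component $p(t)$; hence $(g_1^{-1}g_2,1)\in H\times T$ if and only if $p(t)=1$, i.e. $t\in\Gamma$, i.e. $g_1^{-1}g_2\in H\Gamma$. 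Therefore $\iota$ factors as
\[
G/H \;\longrightarrow\; G/(H\Gamma)\;\xrightarrow{\ \sim\ }\;(G\times U(1))/(H\times T),
\]
where the second arrow is a $G$-equivariant diffeomorphism and the first is the canonical projection.

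Finally, the canonical projection $G/H\to G/(H\Gamma)$ is obtained from the principal $H\Gamma$-bundle $G\to G/(H\Gamma)$ by dividing the total space by $H$, so it is a fibre bundle with discrete fibre $H\Gamma/H$, a set of $n$ points; such a bundle is an $n$-fold covering map. Composing with the diffeomorphism above shows that $G/H$ is an $|\Gamma|$-fold cover of $(G\times U(1))/(H\times T)$, as claimed. I expect the main obstacle to be the bookkeeping with $\Gamma=T\cap G$ — in particular noting that the hypothesis $T\cap H=\{e\}$ is exactly what guarantees $\Gamma\cap H=\{e\}$ and hence that the covering degree is precisely $|\Gamma|$ — together with checking that $H\times T$ is a closed subgroup; neither difficulty is serious once one uses that $T$ is compact and $H$ is closed.
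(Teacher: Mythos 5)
Your proposal is correct and uses exactly the same covering map as the paper: the paper's one-line proof simply exhibits $\pi(gH):=(g,e)(H\times T)$ and asserts it works, while you write down the same map (your $\iota$), verify surjectivity, compute its fibres by factoring through $G/(H\Gamma)$, and check the side conditions (finiteness of $\Gamma$, that $H\Gamma$ and $HT$ are closed subgroups) that the paper leaves implicit. So this is the same route, carried out in full detail.
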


\begin{proof} A possible covering map is given by

\begin{equation}
\begin{split}
\pi & : G/H \rightarrow (G\times U(1))/(H\times T) \\
\pi(gH) & := (g,e)H\times T \\
\end{split}
\end{equation}
\end{proof}

We are now able to prove Lemma \ref{SingOrbQ111}:

\begin{proof}
The Lie algebra $\mathfrak{k}$ is a $2\mathfrak{u}(1)$-module. Since $\mathfrak{m}$ decomposes into pairwise inequivalent
$2\mathfrak{u}(1)$-submodules, the modules $V$ with $2\mathfrak{u}(1)\subseteq V\subseteq 3\mathfrak{su}(2)$ can be easily
classified. After we have checked for each $V$ if it is closed under the Lie bracket, we obtain the $\mathfrak{k}$ from the
table of the lemma. Our statements on the topology of $SU(2)^3/K$ can be proven with help of the Hopf fibration $SU(2)/U(1)
\cong S^2$. The remaining part of the proof is to check for each $K$ if $K/U(1)^2$ is a sphere. The first non-trivial case is
where $K\cong SU(2)\times U(1)^2$. In this situation $K\subseteq SU(2)^3$ is given by:

\begin{equation}
\{(A,Q_{\phi},Q_{\psi}) | A\in SU(2),\: \phi,\psi\in [0,2\pi)\}\:,
\end{equation}

where

\begin{equation*}
Q_{\phi}:=\left(\,\begin{array}{cc}
e^{i\phi} & 0 \\
0 & e^{-i\phi} \\
\end{array}\,\right)\:.
\end{equation*}

$U(1)^2$ can be described as:

\begin{equation}
\{(Q_{-\psi-\phi},Q_{\phi},Q_{\psi}) | \phi,\psi\in [0,2\pi)\}\:.
\end{equation}

We apply Lemma \ref{U1Kuerzen} to our situation. The intersection of the two-dimension\-al group $U(1)^2$ which we divide out
with the semisimple part of $K$ is trivial. Therefore, we have $K/U(1)^2 = (SU(2)\times U(1)^2)/U(1)^2\cong (SU(2)\times
U(1))/U(1)\cong SU(2)\cong S^3$. Next, we assume that $K = SU(2)^2\times U(1)$. For similar reasons as above, $K/U(1)^2$
can be described as $SU(2)^2/U(1)$ with

\begin{equation}
U(1):=\{(Q_{-\phi}, Q_{\phi}) | \phi\in [0,2\pi)\}\:.
\end{equation}

$S^3\times S^3$ is a circle bundle over $K/U(1)^2$. If $K/U(1)^2$
was covered by $S^5$, we would have:

\begin{equation}
\ldots\rightarrow \pi_3(S^1) \rightarrow \pi_3(S^3\times S^3) \rightarrow\pi_3(S^5/\Gamma)\rightarrow\ldots\:.
\end{equation}

Since the higher homotopy groups of $S^5$ and $S^5/\Gamma$ coincide, the above exact sequence becomes:

\begin{equation}
\ldots\rightarrow \{0\} \rightarrow \mathbb{Z}^2 \rightarrow\{0\}\rightarrow\ldots\,.
\end{equation}

which is impossible.  We finally consider the case $K=SU(2)^3$.  $Q^{1,1,1}$ is a circle bundle over $SU(2)^3/U(1)^3=S^2\times
S^2\times S^2$. Therefore, we obtain the following exact sequence:

\begin{equation}
\ldots \rightarrow \pi_2(S^7/\Gamma)\rightarrow \pi_2((S^2)^3)\rightarrow\pi_1(S^1) \rightarrow \pi_1(S^7/\Gamma) \rightarrow
\ldots\:,
\end{equation}

which can be explicitly written as

\begin{equation}
\ldots \rightarrow \{0\} \rightarrow \mathbb{Z}^3\rightarrow \mathbb{Z} \rightarrow \Gamma\rightarrow \ldots\:.
\end{equation}

Since $\Gamma$ is finite, this exact sequence is impossible, too, and $Q^{1,1,1}$ is not homeomorphic to $S^7/\Gamma$.
\end{proof}

Lemma (\ref{SingOrbQ111}) yields two sets of initial conditions at the singular orbit:

\begin{enumerate}
     \item If the singular orbit is $S^2\times S^2\times S^2$, we have $f(0)=0$ and $a(0),b(0),$ $c(0)\neq 0$.
     \item If the singular orbit is $S^2\times S^2$, we have $f(0)=0$ and exactly two of the three initial values $a(0)$,
     $b(0)$, and $c(0)$ are non-zero. Without loss of generality, we assume that $a(0)=0$.
\end{enumerate}

We check for each of the two cases if the metric can be smoothly
extended to the singular orbit and start with the first one.
According to the considerations which we have made at the end of
Section \ref{2ndSection}, the metric is smooth if the following
conditions are satisfied:

\begin{enumerate}
    \item $a$, $b$, $c$, and $f$ are analytic.
    \item The metric on $\mathfrak{p}=\text{span}(e_1,\ldots,e_6)$ is $U(1)^3$-invariant for all $t$.
    \item $a$, $b$, and $c$ are even and $f$ is odd.
    \item The length of the collapsing circle is $2\pi t + O(t^2)$ for small values of $t$.
\end{enumerate}

It follows from (\ref{SolutionfQ1111}) and (\ref{SolutionfQ1112}) that the first condition is satisfied. Since $\mathfrak{p}$
splits into $V_1\oplus V_2\oplus V_3$ with respect to $U(1)^3$, $a(t)$, $b(t)$, and $c(t)$ can be chosen arbitrarily and the
second condition is satisfied. Let $(a(t),b(t),c(t),f(t))$ be a solution of (\ref{HolRedQ111}). It is easy to see that
$(a(-t),b(-t),c(-t),-f(-t))$ is another solution of (\ref{HolRedQ111}), whose value at $t=0$ is the same. Since
(\ref{HolRedQ111}) has a unique solution for any initial metric on the singular orbit $S^2\times S^2\times S^2$, the third
condition is satisfied, too.

Let $\gamma:[0,\tfrac{4\pi}{3}]\rightarrow U(1)^3/U(1)^2_{1,1,1}$ be the loop with $\gamma(\theta):= S_{\theta}
U(1)^2_{1,1,1}$, where $S_{\theta}$ is defined by (\ref{DefSQ111}). As before, $S$ is the one-dimensional subgroup of
$SU(2)^3$ which consists of all $S_{\theta}$. Since the intersection $S\cap U(1)^2_{1,1,1}$ is isomorphic to $\mathbb{Z}_3$,
$\gamma$ winds around the collapsing circle $U(1)^3/U(1)^2_{1,1,1}$ exactly once. Moreover, we have $|\gamma'(0)| = |e_7| =
|f(t)|$ and the length of the collapsing circle thus is $\tfrac{4\pi}{3}|f(t)|$. The fourth smoothness condition is therefore
equivalent to $|f'(0)|=\tfrac{3}{2}$. If we insert $f(0)=0$ into (\ref{HolRedQ111}), we obtain $f'(0)=-3$ and see that the
metric has a singularity at the singular orbit. More precisely, the length of the collapsing circle is twice as long as it
should be. We can think of the singularity as "the opposite" of a conical singularity. We turn our attention to the second set
of initial conditions. As before, we have the following sufficient smoothness conditions:

\begin{enumerate}
    \item $a$, $b$, $c$, and $f$ are analytic.
    \item The metric on $\mathfrak{p}=\text{span}(e_3,e_4,e_5,e_6)$ is $U(1)^2\times SU(2)$-invariant for all $t$.
    \item $b$ and $c$ are even functions while $a$ and $f$ are odd functions.
    \item The length of any great circle on the collapsing sphere is $2\pi t + O(t^2)$ for small values of $t$.
\end{enumerate}

For the similar reasons as in the previous case, the first two conditions are satisfied for any choice of the initial values
$b(0)$ and $c(0)$. We can also prove the third smoothness condition by the same method as before, since
$(a(t),b(t),c(t),f(t))\mapsto (-a(-t),b(-t),c(-t),-f(-t))$ is another symmetry of the system (\ref{HolRedQ111}). In order to
check the fourth smoothness condition, we investigate the great circles on the collapsing sphere $S^3$. Since the metric on
$S^3$ is determined by the values of $a$ and $f$, it suffices to consider the great circles in the directions of $e_1$ and
$e_7$. By the same arguments as before, we obtain the old condition $|f'(0)|=\tfrac{3}{2}$ and $|a'(0)|=\tfrac{1}{2}$ as
smoothness conditions. We cannot directly insert $a(0)=f(0)=0$ into (\ref{HolRedQ111}), since those equations contain terms of
type $\tfrac{f}{a}$. However, we can apply l'H\^{o}pital's rule and obtain a system of two equations which yields
$a'(0)=\pm\tfrac{1}{2}$ and $f'(0)=-\tfrac{3}{2}$. All in all, we have proven that the metrics with singular orbit $S^2\times
S^2$ are smooth. We finally sum up the results of this section:

\begin{Th}
Let $(M,\Omega)$ be a $\text{Spin}(7)$-manifold with a
cohomogeneity one action of $SU(2)^3$ which preserves $\Omega$. In
this situation, the following statements are true:

\begin{enumerate}
   \item The principal orbits are $SU(2)^3$-equivariantly diffeomorphic to $Q^{1,1,1}$.
   \item The metric $g$ which is associated to $\Omega$ is Ricci-flat and K\"ahler and its holonomy is all of $SU(4)$.
   \item The restriction of $g$ to a principal orbit can be written as (\ref{Q111GenMetric}). The coefficient functions $a$,
   $b$, $c$, and $f$ satisfy the equations (\ref{HolRedQ111}), whose solutions are described by (\ref{SolutionfQ1111}) and
   (\ref{SolutionfQ1112}). The K\"ahler form is given by (\ref{EtaQ111}).
\end{enumerate}

If $M$ has a singular orbit, which has to be the case if $(M,g)$
is complete, it is $S^2\times S^2\times S^2$ or $S^2\times S^2$.
Any $SU(2)^3$-invariant metric on the singular orbit can be
extended to a unique complete cohomogeneity one metric with holonomy
$SU(4)$. For any choice of the singular orbit and its metric, $(M,g)$
is asymptotically conical. If the singular orbit is
$S^2\times S^2\times S^2$, the metric is not differentiable at the
singular orbit. If the singular orbit is $S^2\times S^2$, the
metric is smooth at the singular orbit.
\end{Th}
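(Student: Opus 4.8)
The plan is to assemble the statement from the ingredients already established in this section; essentially no new argument is needed beyond bookkeeping which earlier step supplies which claim.

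\textbf{Reduction to \(Q^{1,1,1}\) and to a single \(G_2\)-structure.} First I would invoke Lemma \ref{G2HomLemma}: a principal orbit \(SU(2)^3/U(1)^2\) admits an \(SU(2)^3\)-invariant \(G_2\)-structure only if the isotropy representation of \(U(1)^2\) is conjugate into a Cartan subalgebra of \(G_2\). Comparing the weights of the isotropy action of \(2\mathfrak{u}(1)_{k,l,m}\) on \(\mathfrak m\) with those of the Cartan subalgebra \eqref{CSAG2} of \(\mathfrak g_2\) forces \(\{k,l,m\}\subseteq\{-1,1\}\); after applying the sign- and permutation-changes \(\phi_P\) we may take \(k=l=m=1\), which is part (1). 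Next, Lemma \ref{G2StrucSpace} together with the normaliser computation (which shows \(\mathrm{Norm}_{SO(7)}U(1)^2/\mathrm{Norm}_{G_2}U(1)^2\cong U(1)\)) identifies the set of invariant \(G_2\)-structures with a fixed associated metric and orientation with a circle, realised isometrically by conjugation with the subgroup \(S\) of \eqref{DefSQ111}. Since each \(S_\theta\) extends to an isometry \(\Phi_\theta\) of \(M\) carrying parallel \(\mathrm{Spin}(7)\)-structures to parallel ones, it suffices to analyse the single structure built from the frame \eqref{CBasisQ111}, whose associated metric is the general invariant metric \eqref{Q111GenMetric}.

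\textbf{The ODE system and the global shape.} I would then compute \(d\Omega\) using \(d\alpha(X,Y)=-\alpha([X,Y])\) on left-invariant forms of \(SU(2)^3\) and check that \(d\Omega=0\) is equivalent to the system \eqref{HolRedQ111}. From \(a'a=b'b=c'c=-\tfrac16 f\) the differences \(a^2-b^2,\dots\) are constant, giving \eqref{SolutionfQ1111}; introducing the antiderivative \(F\) of \(f\) and the reparametrisation \(\widetilde f=f\circ F^{-1}\) linearises the remaining equation in \(\widetilde f^{\,2}\), and variation of constants yields \eqref{SolutionfQ1112}. The asymptotics \(\widetilde f(t)\sim c\sqrt t\) translate into \(f(t)=\tfrac{c^2}{2}t+O(1)\) and, via \eqref{SolutionfQ1111}, into \(a,b,c\) being asymptotically linear; matching \(a(t)=a_1t,\dots\) in \eqref{HolRedQ111} gives \(a_1^2=b_1^2=c_1^2=\tfrac18\), \(f_1=\tfrac34\). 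Since \eqref{SolutionfQ1111}–\eqref{SolutionfQ1112} show \(f\) does not vanish again and \(a,b,c\) stay away from \(0\), the solution exists for all \(t\ge 0\), so \(M\) is asymptotically conical over a fixed nearly parallel \(G_2\)-structure on \(Q^{1,1,1}\), hence complete. This proves part (3) and the asymptotically conical claim; completeness of a cohomogeneity one \(M\) forces \(M/SU(2)^3=[0,\infty)\) with a (singular) orbit over \(0\) by the discussion in Section \ref{CohomOneSec}.

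\textbf{Holonomy, singular orbits, and smoothness.} Because the \(\Phi_\theta\) preserve \(g\) and the orientation but move \(\Omega\) around the circle generated by \(S\), Lemma \ref{SU4Lemma}(2) gives \(\mathrm{Hol}(g)\subseteq SU(4)\), equivalently \(g\) is Ricci-flat Kähler; to show there is no further reduction I would rule out \(SU(3)\) and \(Sp(2)\) as in the section: a parallel vector field would have to be \(SU(2)^3\)-invariant, hence \(c_1\tfrac1f e_7+c_2\tfrac{\partial}{\partial t}\) with \(c_1,c_2\) constant, which \eqref{HolRedQ111} forbids (the fourth equation cannot vanish for \(f\) constant nonzero, and \(a'=b'=c'=f'=0\) is impossible), and by \eqref{EtaEquationsQ111} every invariant closed two-form is a multiple of the single Kähler form \eqref{EtaQ111}, so \(Sp(2)\) is excluded; thus \(\mathrm{Hol}(g)=SU(4)\), which is part (2). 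Lemma \ref{SingOrbQ111} then leaves only the singular orbits \(S^2\times S^2\) and \(S^2\times S^2\times S^2\) (the cases \(S^2\) and \(Q^{1,1,1}\) being killed by Theorem \ref{Most} and the homotopy sequences of the Hopf circle bundles). For each admissible initial metric the system \eqref{HolRedQ111} has a unique solution (using l'Hôpital at \(a(0)=f(0)=0\) in the \(S^2\times S^2\) case), which is the desired complete extension; its smoothness is checked against Theorem \ref{SmoothExtension} via the maps \(\phi^h_{2m},\phi^v_{2m}\): analyticity of \(a,b,c,f\) is explicit, the horizontal invariance is automatic since \(\mathfrak p\) splits into inequivalent modules, the parity is the symmetry \((a,b,c,f)(t)\mapsto(\pm a,b,c,-f)(-t)\) of \eqref{HolRedQ111}, and the collapsing-sphere length condition reduces to \(|f'(0)|=\tfrac32\) (and \(|a'(0)|=\tfrac12\)). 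Substituting gives \(f'(0)=-3\) for \(S^2\times S^2\times S^2\), violating the length condition, so the metric is not differentiable there; l'Hôpital gives \(|f'(0)|=\tfrac32\), \(|a'(0)|=\tfrac12\) for \(S^2\times S^2\), so the metric is smooth, and smoothness of \(\Omega\) follows from that of \(g\) and the circle-of-\(\mathrm{Spin}(7)\)-structures argument at the end of Section \ref{2ndSection}.

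\textbf{Main obstacle.} The genuinely delicate step — as opposed to routine bookkeeping — is the exactness of the holonomy: proving the reduction goes down to \(SU(4)\) but no further requires the explicit ODE system \eqref{HolRedQ111} in an essential way, both to rule out an invariant parallel vector field and to show the space of invariant closed two-forms is one-dimensional. The smoothness computation at the singular orbit is the other place where care is needed, since the equations are singular at \(f=0\) (and at \(a=0\)) and l'Hôpital must be applied correctly to extract \(f'(0)\) and \(a'(0)\).
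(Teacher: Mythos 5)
Your proposal is correct and follows essentially the same route as the paper: the weight comparison via Lemma \ref{G2HomLemma} forcing $Q^{1,1,1}$, the reduction to one $G_2$-structure via the circle $S$ and Lemma \ref{G2StrucSpace}, the system (\ref{HolRedQ111}) with its explicit solution and conical asymptotics, the holonomy argument via Lemma \ref{SU4Lemma} plus exclusion of $SU(3)$ and $Sp(2)$, and the smoothness analysis at the singular orbits via Theorem \ref{SmoothExtension} with the values $f'(0)=-3$ versus $|f'(0)|=\tfrac32$ and the l'H\^opital computation. No gaps beyond the level of detail the paper itself leaves implicit.
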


\begin{Rem}
The metrics with holonomy $SU(4)$ which we have constructed have
also been considered by Cveti\v{c}, Gibbons, L\"u, and Pope in
\cite{Cve03}. In that paper, the authors construct Ricci-flat
K\"ahler-metrics on holomorphic vector bundles over a product of
several Einstein-K\"ahler manifolds. As a special case, they
obtain the same metrics as the author. The authors also prove that
the metrics are non-compact and complete away from the singular
orbit. In \cite{Cve01}, the authors investigate the same metrics
in another context. Both papers are based on earlier works by
Berard-Bergery \cite{BerBer}, Page and Pope \cite{PagePope}, and
Stenzel \cite{Stenzel}. The examples with singular orbit
$S^2\times S^2\times S^2$ have also been considered by Herzog and
Klebanov \cite{Herzog}. Although the metrics which we have
constructed are known, the proof that any parallel
$\text{Spin}(7)$-manifold with a cohomogeneity one action of
$SU(2)^3$ is one of our examples is new. Our results thus prove
that it is impossible to deform the K\"ahler examples into metrics
with holonomy $\text{Spin}(7)$ without loosing the
$SU(2)^3$-symmetry. Moreover, it follows from Remark
\ref{SU4LemmaRemark} that any (not necessarily parallel)
$\text{Spin}(7)$-structure of cohomogeneity one with principal
orbit $Q^{1,1,1}$ has to reduce to an $SU(4)$-structure. This fact
has not been mentioned in the literature before, either. All in
all, we hope to have introduced an interesting, more algebraic
approach to the issue of special cohomogeneity one metrics with
principal orbit $Q^{1,1,1}$.
\end{Rem}

\section{Metrics with principal orbit $M^{k,l,0}$}
\label{M110}

In this section, we consider cohomogeneity one manifolds whose
principal orbit is of type $(SU(3)\times SU(2))/(SU(2)\times
U(1))$. The semisimple part $SU(2)$ of the isotropy group shall be
embedded into the first factor of $SU(3)\times SU(2)$. We will
write any element of $SU(3)\times SU(2)$ as a block matrix in
$GL(5,\mathbb{R})$ consisting of a $3\times 3$- and a $2\times
2$-matrix.

Our first step is to classify all homogeneous spaces of type
$(SU(3)\times SU(2))/$ $(SU(2)\times U(1))$. We use the notation
of Castellani et al. \cite{Cast} and Fabbri et al. \cite{Fab1}
and denote these spaces by $M^{k,l,0}$.  In order to explain the
meaning of the three indices, we first consider more general
spaces of type $M^{k,l,m}=(SU(3)\times SU(2)\times U(1)) / (SU(2)
\times U(1)' \times U(1)'')$. We define the following
one-dimensional subalgebra of $\mathfrak{su}(3) \oplus
\mathfrak{su}(2) \oplus \mathfrak{u}(1)$:

\begin{equation}
\mathfrak{u}(1)_{k,l,m}^{'''}:= \left\{
\left(\,\begin{array}{cccccc} \hhline{---~~~}
\multicolumn{1}{|c}{\tfrac{k}{2}ix} & 0 & \multicolumn{1}{c|}{0}
&&&\\ \multicolumn{1}{|c}{0} & \tfrac{k}{2}ix &
\multicolumn{1}{c|}{0} &&&\\ \multicolumn{1}{|c}{0} & 0 &
\multicolumn{1}{c|}{-kix}\\ \hhline{-----~} &&&
\multicolumn{1}{|c}{-\tfrac{l}{2}ix} & \multicolumn{1}{c|}{0} &\\
&&& \multicolumn{1}{|c}{0} & \multicolumn{1}{c|}{\tfrac{l}{2}ix}
&\\ \hhline{~~~---} &&&&& \multicolumn{1}{|c|}{mix}\\
\hhline{~~~~~-}
\end{array}\,\right)\right.\left|
\begin{array}{l} \\ \\ \\ \\ \\ \\ \end{array}
x\in\mathbb{R}\right\}\:.
\end{equation}

The diagonal matrices in $\mathfrak{su}(3) \oplus \mathfrak{su}(2)
\oplus \mathfrak{u}(1)$ are a Cartan subalgebra $\mathfrak{t}$ of
that algebra. With help of the above definitions, we define an embedding
$\imath_{k,l,m}:\mathfrak{su}(2)\oplus\mathfrak{u}(1)'\oplus\mathfrak{u}(1)''\rightarrow
\mathfrak{su}(3) \oplus \mathfrak{su}(2) \oplus \mathfrak{u}(1)$:
$\mathfrak{su}(2)$ is embedded into $\mathfrak{su}(3)$
such that it leaves the subspace $\mathbb{C}^2$ of $\mathbb{C}^3$
invariant. $\imath_{k,l,m}(\mathfrak{u}(1)'\oplus
\mathfrak{u}(1)'')$ shall be the orthogonal complement of
$\mathfrak{u}(1)_{k,l,m}^{'''} \oplus (\mathfrak{su}(2)\cap
\mathfrak{t})$ in $\mathfrak{t}$ with respect to the Killing form. The
quotient of $SU(3)\times SU(2)\times U(1)$ by the Lie subgroup
whose Lie algebra is
$\imath_{k,l,m}(\mathfrak{su}(2)\oplus\mathfrak{u}(1)'\oplus\mathfrak{u}(1)'')$
is denoted by $M^{k,l,m}$. In fact, any coset space of type $(SU(3)\times
SU(2)\times U(1)) / (SU(2) \times U(1)' \times U(1)'')$ where
$SU(2)$ is embedded into $SU(3)$ can be identified with an
$M^{k,l,m}$.

The space $M^{k,l,m}$ is covered by $M^{k,l,0}$ (cf. Castellani \cite{Cast} for details). We will therefore assume from now on
that $m=0$. Since in this case the abelian factor of $SU(3) \times SU(2) \times U(1)$ is a subgroup of $U(1)'\times U(1)''$,
$M^{k,l,0}$ is diffeomorphic to a quotient of type $(SU(3)\times SU(2))/ (SU(2) \times U(1))$. Let

\begin{equation}
P:=\left(\, I_3\:,
\left(\,\begin{array}{cc}
0 & i \\
-i & 0 \\
\end{array}\,\right)\,\right)\in SU(3)\times SU(2)\:,
\end{equation}

and let $\phi_P:SU(3)\times SU(2)\rightarrow SU(3)\times SU(2)$ be defined by $\phi_P(Q):=PQP^{-1}$. $\phi_P$ maps
$\imath_{k,l,0}(SU(2)\times U(1))$ into $\imath_{k,-l,0}(SU(2)\times U(1))$. The spaces $M^{k,l,0}$ and $M^{k,-l,0}$ thus are
$SU(3)\times SU(2)$-equivariantly diffeomorphic. Since $M^{k,l,0}$ and $M^{-k,-l,0}$ are the same, too, we can assume without
loss of generality that $(k,l)\in\mathbb{N}_0 \times \mathbb{N}_0 \setminus \{(0,0)\}$.

We prove which $M^{k,l,0}$ admit an $SU(3)\times SU(2)$-invariant $G_2$-structure. There exists a subalgebra of
$\mathfrak{g}_2$ which is isomorphic to $\mathfrak{su}(2)$ and acts irreducibly on $\mathbb{H}\epsilon$ and trivially on
$\text{Im}(\mathbb{H})$. More precisely, there exists an $\mathfrak{su}(2)$-equivariant map from $\mathbb{C}^2$ to
$\mathbb{H}\epsilon$ which maps the canonical basis of the real vector space $\mathbb{C}^2$ to
$(\epsilon,i\epsilon,k\epsilon,j\epsilon)$. Up to conjugation, there exists a unique one-dimensional subalgebra of
$\mathfrak{g}_2$ which commutes with $\mathfrak{su}(2)$. We thus have constructed a subalgebra which is isomorphic to
$\mathfrak{su}(2)\oplus \mathfrak{u}(1)$. By an analo\-gous calculation as in Section \ref{Q111}, we see that the action of this
algebra on $\text{Im}(\mathbb{O})$ is equivalent to the isotropy action if and only if $k=l=\pm 1$. All in all, we have shown
that the only principal orbit we have to consider is $M^{1,1,0}$.

We remark that the connected subgroup of $G_2$ which has $\mathfrak{su}(2)\oplus \mathfrak{u}(1)$ as Lie algebra is not
isomorphic to $SU(2)\times U(1)$, but to $U(2)$. The kernel of the isotropy representation of $SU(2)\times U(1)$ is
$\mathbb{Z}_2$. The group which acts effectively on the tangent space of $M^{1,1,0}$ therefore is $U(2)$ and there is no
contradiction to Lemma \ref{G2HomLemma}.

Next, we classify the $SU(3)\times SU(2)$-invariant metrics and $G_2$-structures on $M^{1,1,0}$. As in the previous section, we
identify the tangent space of $M^{1,1,0}$ with the complement $\mathfrak{m}$ of $\mathfrak{su}(2)\oplus\mathfrak{u}(1)$ in
$\mathfrak{su}(3)\oplus\mathfrak{su}(2)$ with respect to the metric $q(X,Y):=-\text{tr}(XY)$. We fix the following basis of
$\mathfrak{su}(3) \oplus \mathfrak{su}(2)$:

\begin{tabular}{ll}
$e_1:=(E_{13}-E_{31},0)$ & $e_2:=(iE_{13}+E_{31},0)$ \\
$e_3:=(E_{23}-E_{32},0)$ & $e_4=(iE_{23}+iE_{32},0)$ \\
$e_5:=(0,E_{12}-E_{21})$ & $e_6:=(0,iE_{12}+iE_{21})$ \\
$e_7:=(\tfrac{1}{2}iE_{11} + \tfrac{1}{2}iE_{22} - iE_{33},-\tfrac{1}{2}iE_{11}+\tfrac{1}{2}iE_{22})$ &
$e_8:=(E_{12}-E_{21},0)$ \\
$e_9:=(iE_{12}+iE_{21},0)$ & $e_{10}:=(iE_{11}-iE_{22},0)$ \\
$e_{11}:=(\tfrac{1}{3}iE_{11} + \tfrac{1}{3}iE_{22} - \tfrac{2}{3}iE_{33},iE_{11} - iE_{22})$ & \\
\end{tabular}

In the above table $E_{ij}$ denotes the $3\times 3$- ($2\times 2$-)matrix with a "$1$" in the $i$th row and $j$th column. All
other coefficients of $E_{ij}$ shall be zero. $(e_1,\ldots,e_7)$ is a basis of $\mathfrak{m}$ and $(e_8,\ldots,e_{11})$ is a
basis of the isotropy algebra $\mathfrak{su}(2) \oplus \mathfrak{u}(1)$. $\mathfrak{m}$ splits with respect to
$\mathfrak{su}(2) \oplus \mathfrak{u}(1)$ into the following irreducible submodules:

\begin{eqnarray}
V_1 & := & \text{span}(e_1,e_2,e_3,e_4) \\
V_2 & := & \text{span}(e_5,e_6) \\
V_3 & := & \text{span}(e_7)
\end{eqnarray}

As in Section \ref{Q111}, any $SU(3)\times SU(2)$-invariant metric on $M^{1,1,0}$ can be identified with a $q$-symmetric,
positive definite, $\mathfrak{su}(2) \oplus \mathfrak{u}(1)$-equivariant endomorphism of $\mathfrak{m}$. With help of Schur's
Lemma, we see that the invariant metrics $g$ are precisely those with:

\begin{equation}
\label{M110GenMetric}
\begin{split}
g = & a^2 (e^1\otimes e^1 + e^2\otimes e^2 + e^3\otimes e^3 + e^4\otimes e^4) + b^2(e^5\otimes e^5 + e^6\otimes e^6)\\
\quad & + c^2(e^7\otimes e^7) \\
\end{split}
\end{equation}

with $a,b,c\in\mathbb{R}\setminus\{0\}$. For the same reasons as in the previous section, we allow $a$, $b$, and $c$ to take
negative values, too.

We fix an orientation and an $SU(3)\times SU(2)$-invariant metric
$g$ on $M^{1,1,0}$. Let $\omega$ be an $SU(3)\times
SU(2)$-invariant $G_2$-structure such that its associated metric
is $g$ and $\omega\wedge\ast\omega$ is a positive volume form. Our
aim is to describe the set $M$ of all such $G_2$-structures. Any
invariant $G_2$-structure can be described by an
$\mathfrak{su}(2)\oplus\mathfrak{u}(1)$-equivariant map
$\psi:\mathfrak{m} \rightarrow\text{Im}(\mathbb{O})$. Let
$2\mathfrak{u}(1)$ be a Cartan subalgebra of $\mathfrak{su}(2)
\oplus \mathfrak{u}(1)$. $\psi$ is also a
$2\mathfrak{u}(1)$-equivariant map. The action of
$2\mathfrak{u}(1)$ on $\mathfrak{m}$ is equivalent to the action of
the Cartan subalgebra of $\mathfrak{g}_2$ on $\text{Im}(\mathbb{O})$
and thus to the isotropy action in Section \ref{Q111}. $M$ can therefore
be considered as a subset of the $U(1)$-orbit from that section. $U(1)$
acts on $\mathfrak{m}$ such that its representation is given by

\begin{equation}
\label{ActionNormM110} T:=\left\{\left(\,
\begin{array}{cccc}
\hhline{-~~~} \multicolumn{1}{|c|}{R_{\theta}} &&& \\
\hhline{--~~} & \multicolumn{1}{|c|}{R_{\theta}} && \\
\hhline{~--~} && \multicolumn{1}{|c|}{R_{\theta}} & \\
\hhline{~~--} &&& \multicolumn{1}{|c|}{1} \\ \hhline{~~~-}
\end{array}\,\right)\right.
\left|\begin{array}{l}  \\ \\ \\ \\ \end{array}
\theta\in\mathbb{R}\right\}\:,
\end{equation}

with respect to the basis $(e_1,\ldots,e_7)$. $R_{\theta}$ again
denotes the rotation in the plane around an angle of $\theta$.
Conjugation by any element of $T$ leaves not only
$2\mathfrak{u}(1)$ but also $\mathfrak{su}(2) \oplus
\mathfrak{u}(1)$ invariant. Moreover, $T$ does not preserve
$\omega$. Otherwise, the Lie algebra of $T$ and $\mathfrak{su}(2)
\oplus \mathfrak{u}(1)$ would generate a subalgebra of rank $3$ of
$\mathfrak{g}_2$. Therefore, the action of $T$ generates the set
of all invariant $G_2$-structures which have the same extension to
an $SO(7)$-structure as $\omega$. These $G_2$-structures can also
be obtained as the pull-back of $\omega$ by certain isometries of
$M^{1,1,0}$. We consider the group:

\begin{equation}
S:=\left\{\left(\,
\begin{array}{lllll}
\hhline{---~~} \multicolumn{1}{|c}{e^{\tfrac{i\theta}{2}}} & 0 &
\multicolumn{1}{c|}{0} &&\\ \multicolumn{1}{|c}{0} &
e^{\tfrac{i\theta}{2}} & \multicolumn{1}{c|}{0} &&\\
\multicolumn{1}{|c}{0} & 0 & \multicolumn{1}{c|}{e^{-i\theta}}
&&\\ \hhline{-----} &&&
\multicolumn{1}{|c}{e^{-\tfrac{i\theta}{2}}} &
\multicolumn{1}{c|}{0}\\ &&& \multicolumn{1}{|c}{0} &
\multicolumn{1}{c|}{e^{\tfrac{i\theta}{2}}}\\ \hhline{~~~--}
\end{array}
\,\right)=:S_{\theta}\right.\left|
\begin{array}{l} \\ \\ \\ \\ \\ \\ \\ \end{array}
\theta\in\mathbb{R}\right\}\:.
\end{equation}

$S$ is a subgroup of $\text{Norm}_{SU(3)\times SU(2)}(SU(2)\times
U(1))$. Conjugation by $S_{\theta}$ therefore induces a
well-defined diffeomorphism $f_{\theta}$ of $M^{1,1,0}$. We prove
that $f_{\theta}$ is even an isometry. $(df_{\theta})_e$ is
determined by the adjoint action of $S_{\theta}$ restricted to
$\mathfrak{m}$. Since $S$ is generated by $e_7$, we only have to
prove that $\text{ad}_{e_7}|_{\mathfrak{m}}$ is skew-symmetric
with respect to $g$. We obtain by a straightforward calculation:

\begin{equation}
\text{ad}_{e_7}|_{\mathfrak{m}}=\left(\,
\begin{array}{lllllll}
\hhline{--~~~~~} \multicolumn{1}{|c}{0} &
\multicolumn{1}{c|}{-\tfrac{3}{2}} &&&&&\\
\multicolumn{1}{|c}{\tfrac{3}{2}} & \multicolumn{1}{c|}{0} &&&&&\\
\hhline{----~~~} && \multicolumn{1}{|c}{0} &
\multicolumn{1}{c|}{-\tfrac{3}{2}} &&&\\ &&
\multicolumn{1}{|c}{\tfrac{3}{2}} & \multicolumn{1}{c|}{0} &&&\\
\hhline{~~----~} &&&& \multicolumn{1}{|c}{0} &
\multicolumn{1}{c|}{1} &\\ &&&& \multicolumn{1}{|c}{-1} &
\multicolumn{1}{c|}{0} &\\ \hhline{~~~~---} &&&&&&
\multicolumn{1}{|c|}{0}\\ \hhline{~~~~~~-}
\end{array}
\,\right)
\end{equation}

with respect to $(e_1,\ldots,e_7)$. This endomorphism is
skew-symmetric with respect to any metric of type
(\ref{M110GenMetric}). Moreover, it is orientation-preserving.
Unfortunately, the above matrix does not coincide with
(\ref{ActionNormM110}). Nevertheless, it generates the same family
of $G_2$-structures as $T$. The reason for this is that $T$ and
the action of $S$ on $\mathfrak{m}$ are both contained in
$\text{Norm}_{SO(7)}(\mathfrak{su}(2) \oplus \mathfrak{u}(1))$,
but not in $\text{Norm}_{G_2}(\mathfrak{su}(2) \oplus
\mathfrak{u}(1))$. In fact, we could have chosen $S$ as any
connected one-dimensional subgroup of the standard maximal torus
of $SU(3)\times SU(2)$ which is not contained in $SU(2)\times
U(1)$.

Again, let $g$ be an arbitrary $SU(3)\times SU(2)$-invariant metric on $M^{1,1,0}$. Furthermore, we assume without loss of
generality that $\tfrac{\partial}{\partial t}$ is of unit length. Our next aim is to construct a basis $(f_i)_{0\leq i\leq 7}$
of the tangent space which yields an $SU(3)\times SU(2)$-invariant $\text{Spin}(7)$-structure such that the restriction of its
associated metric to the principal orbit is $g$. $(f_i)_{1\leq i\leq 7}$ has to be orthonormal with respect to $g$. Moreover,
it should be possible to identify the $\mathfrak{su}(2) \oplus \mathfrak{u}(1)$-modules $\mathfrak{m}$ and
$\text{Im}(\mathbb{O})$ with each other, such that $(f_i)_{1\leq i\leq 7}$ is mapped to the basis $(i,\ldots,k\epsilon)$.
Motivated by these considerations, we choose:

\begin{equation}
\label{CBasisM110}
\begin{array}{llll}
f_0:=\tfrac{\partial}{\partial t} & f_1:=\tfrac{1}{c}e_7 &
f_2:=\tfrac{1}{b} e_6 & f_3:=\tfrac{1}{b}e_5\\ &&&\\
f_4:=\tfrac{1}{a}e_1 & f_5:=\tfrac{1}{a}e_2 & f_6:=\tfrac{1}{a}e_4
& f_7:=\tfrac{1}{a}e_3\\
\end{array}
\end{equation}

For similar reasons as in the previous section, this is the most
general basis which we have to consider. The four-form $\Omega$
which is determined by (\ref{CBasisM110}) is given by:

\begin{equation}
\begin{split}
\Omega & =-a^4\:e^{1234}+a^2b^2\:e^{1256}+a^2b^2\:e^{3456}\\
&\quad
+a^2bc\:e^{1367}-a^2bc\:e^{1457}-a^2bc\:e^{2357}-a^2bc\:e^{2467}\\
&\quad -a^2b\:e^{135}\wedge dt-a^2b\:e^{146}\wedge
dt-a^2b\:e^{236}\wedge dt +a^2b\:e^{245}\wedge dt\\ &\quad
-a^2c\:e^{127}\wedge dt-a^2c\:e^{347}\wedge dt+b^2c\:e^{567}\wedge
dt
\\
\end{split}
\end{equation}

After having calculated $d\Omega$, we  are able to express the
condition $d\Omega=0$ as a system of ordinary differential
equations for the functions $a$, $b$, and $c$:

\begin{equation}
\label{HolRedM110}
\begin{split}
\frac{a'}{a} & = \frac{3}{8}\frac{c}{a^2}\\ \frac{b'}{b} & =
\frac{1}{4}\frac{c}{b^2}\\ \frac{c'}{c} & =
8\frac{1}{c}-\frac{1}{4}\frac{c}{b^2} -\frac{3}{4}\frac{c}{a^2}\\
\end{split}
\end{equation}

The above system can be solved explicitly. We denote the initial
values $a(0)$, $b(0)$, and $c(0)$ by $a_0$, $b_0$, and $c_0$.
Furthermore, we define $C(t):=\int_0^t c(s)ds$. The first two
equations of (\ref{HolRedM110}) can be simplified to:

\begin{equation}
\label{SolutionabM110}
\begin{split}
a^2 & = \tfrac{3}{4}C + a_0^2\\ b^2 & = \tfrac{1}{2}C + b_0^2\\
\end{split}
\end{equation}

As in Section \ref{Q111}, we insert the two above equations into
the last one of (\ref{HolRedM110}). After that, we are able to
deduce an equation for the function
$\widetilde{c}(t):=c(C^{-1}(t))$ and find the following solution
of our initial value problem:

\begin{equation}
\label{SolutioncM110}
\begin{split}
\widetilde{c}(t)^2 & = \frac{32a_0^4b_0^2c_0^2}{9(t + 2b_0^2)(t +
\tfrac{4}{3}a_0^2)^2}\\ &\quad +
\frac{8}{(t + 2b_0^2)(t + \tfrac{4}{3}a_0^2)^2}
\int_0^t(s+2b_0^2)(s+\tfrac{4}{3}a_0^2)^2ds\:.\\
\end{split}
\end{equation}

$\widetilde{c}(t)$ is $c\sqrt{t} + O(1)$ for $t\rightarrow\infty$
and a $c\in\mathbb{R} \setminus \{0\}$. We can apply the same
arguments as in Section \ref{Q111} and see that the metric is
asymptotically conical. The cone which the metric approaches is
given by

\begin{equation}
\label{M110NP}
a(t)^2=\tfrac{3}{4}t^2\:,\quad b(t)^2=\tfrac{1}{2}t^2\:, \quad\text{and}\quad c(t)=2t\:.
\end{equation}

The base of the cone carries a nearly parallel $G_2$-structure.
Conversely, any nearly parallel $G_2$-structure on $M^{1,1,0}$ can
be described by $|a|=\tfrac{\sqrt{3}}{2}|t|$,
$|b|=\tfrac{1}{\sqrt{2}}|t|$, and $c=2t$ for a $t\in\mathbb{R}
\setminus \{0\}$.

Since there is an isometric $U(1)$-action which preserves the
$\text{Spin}(7)$-structure, the holonomy is a subgroup of $SU(4)$.
The existence of a parallel vector field can be excluded by the
same arguments as in Section \ref{Q111}. The holonomy therefore is
either $SU(4)$ or contained in $Sp(2)$. Any $SU(3)\times
SU(2)$-invariant two-form on $M^{1,1,0}$ is a linear combination
of:

\begin{equation}
e^{12} + e^{34}\:,\quad e^{56}\:, \quad e^7\wedge dt\:.
\end{equation}

Let $\eta$ be a K\"ahler form on $M$. Analogously to the previous
section, $\eta$ has to satisfy:

\begin{equation}
\eta = \epsilon_1 a^2\: e^{12} + \epsilon_1 a^2\: e^{34} +
\epsilon_2 b^2\: e^{56} + \epsilon_3 c\: e^7 \wedge
dt\quad\text{with}\quad\epsilon_1,\:\epsilon_2,\: \epsilon_3
\in\{-1,1\}\:.
\end{equation}

The condition $d\eta=0$ yields:

\begin{equation}
\frac{a'}{a}=\epsilon_1\epsilon_3\frac{3}{8}\frac{c}{a^2}\quad\text{and}\quad
\frac{b'}{b}=-\epsilon_2\epsilon_3\frac{1}{4}\frac{c}{b^2}\:.\\
\end{equation}

If we choose $\epsilon_1=\epsilon_3=1$ and $\epsilon_2=-1$, the
above equations are a consequence of (\ref{HolRedM110}) and the
K\"ahler form becomes:

\begin{equation}
\label{EtaM110} \eta = a^2\: e^{12} + a^2\: e^{34} - b^2\: e^{56} + c\: e^7 \wedge dt\:.
\end{equation}

For any other choice of $\epsilon_1$, $\epsilon_2$, and
$\epsilon_3$, $\eta$ would change its sign or we would obtain a
contradiction to (\ref{HolRedM110}). We thus have proven that the
holonomy is all of $SU(4)$. As in the previous section, our next
step is to classify the possible singular orbits:

\begin{Le}
Let $SU(2)\times U(1)$ be embedded into $SU(3)\times SU(2)$ such
that the quotient $(SU(3)\times SU(2))/(SU(2)\times U(1))$ is
$M^{1,1,0}$. Furthermore, let $K$ with $SU(2)\times U(1)
\subsetneq K\subseteq SU(3)\times SU(2)$ be a closed, connected
subgroup. We denote the Lie algebra of $K$ by $\mathfrak{k}$. In
this situation, $\mathfrak{k}$ and $K$ can be found in the table
below. Furthermore, $K/(SU(2)\times U(1))$ and $(SU(3)\times
SU(2))/K$ satisfy the following topological conditions:

\begin{center}
\begin{tabular}{|l|l|l|l|}
\hline

$\mathfrak{k}$ & $K$ & $K/(SU(2)\times U(1))$ & $(SU(3)\times
SU(2))/K$ \\

\hline \hline

$\mathfrak{su}(2)\oplus2\mathfrak{u}(1)$ & $U(2)\times U(1)$ &
$\cong S^1$ & $\cong \mathbb{CP}^2\times S^2$\\

\hline

$2\mathfrak{su}(2)\oplus\mathfrak{u}(1)$ & $U(2)\times SU(2)$ &
$\cong S^3$ & $\cong\mathbb{CP}^2$\\

\hline

$\mathfrak{su}(3)\oplus\mathfrak{u}(1)$ & $SU(3)\times U(1)$ &
$\cong S^5/\mathbb{Z}_3$ & $\cong S^2$\\

\hline

$\mathfrak{su}(3)\oplus\mathfrak{su}(2)$ & $SU(3)\times SU(2)$ &
$=M^{1,1,0}\not\cong S^7/\Gamma$ & \\

\hline
\end{tabular}
\end{center}

In the above table, $\Gamma\subseteq O(8)$ denotes an arbitrary discrete subgroup and the group $\mathbb{Z}_3$ which we divide
out is $\{\lambda\:\textnormal{Id}_{\mathbb{C}^3}|\lambda^3=1\}$.
\end{Le}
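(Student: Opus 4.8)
The plan is to proceed exactly as in Lemma \ref{SingOrbQ111}, which classified the intermediate subgroups $K$ with $U(1)^2 \subsetneq K \subseteq SU(2)^3$, adapted to the pair $SU(2)\times U(1) \subsetneq K \subseteq SU(3)\times SU(2)$. First I would classify the Lie algebras $\mathfrak{k}$. The key algebraic fact is that $\mathfrak{m}$ decomposes into the pairwise inequivalent $\mathfrak{su}(2)\oplus\mathfrak{u}(1)$-submodules $V_1,V_2,V_3$ of respective dimensions $4,2,1$; hence any submodule $V$ with $\mathfrak{su}(2)\oplus\mathfrak{u}(1) \subseteq V \subseteq \mathfrak{su}(3)\oplus\mathfrak{su}(2)$ is a sum of $\mathfrak{su}(2)\oplus\mathfrak{u}(1)$ with a subset of $\{V_1,V_2,V_3\}$. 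One then checks for each of the resulting candidates which are closed under the Lie bracket. Adjoining $V_2$ gives the full $\mathfrak{su}(2)$ inside the second $SU(2)$-factor, yielding $\mathfrak{k}=2\mathfrak{su}(2)\oplus\mathfrak{u}(1)$; adjoining $V_1$ completes the first $SU(2)\subseteq SU(3)$ to $\mathfrak{su}(3)$, yielding $\mathfrak{k}=\mathfrak{su}(3)\oplus\mathfrak{u}(1)$; adjoining $V_3$ alone gives the abelian case $\mathfrak{su}(2)\oplus 2\mathfrak{u}(1)$; adjoining everything gives $\mathfrak{su}(3)\oplus\mathfrak{su}(2)$. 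Any combination that is \emph{not} bracket-closed is discarded, and one verifies that the four cases in the table are exactly the closed ones. Passing from $\mathfrak{k}$ to the connected group $K$ is then routine bookkeeping of which diagonal $U(1)$-factors survive, giving $U(2)\times U(1)$, $U(2)\times SU(2)$, $SU(3)\times U(1)$, and $SU(3)\times SU(2)$.

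Next I would compute the fibres $(SU(3)\times SU(2))/K$. Each is obtained by a Hopf-type fibration: $SU(3)/U(2)\cong\mathbb{CP}^2$ and $SU(2)/U(1)\cong S^2$. Thus $(SU(3)\times SU(2))/(U(2)\times U(1))\cong\mathbb{CP}^2\times S^2$, $(SU(3)\times SU(2))/(U(2)\times SU(2))\cong\mathbb{CP}^2$, $(SU(3)\times SU(2))/(SU(3)\times U(1))\cong S^2$, and the last one is a point. These diffeomorphism types are immediate.

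The fibres $K/(SU(2)\times U(1))$ require more care, and this is where I expect the main work to lie — specifically, showing that the two larger collapsing "spheres" $S^5/\mathbb{Z}_3$ and the $M^{1,1,0}$-case behave as claimed, and more importantly that the \emph{small} ones really are spheres. For $\mathfrak{k}=\mathfrak{su}(2)\oplus 2\mathfrak{u}(1)$, i.e. $K=U(2)\times U(1)$, the quotient $K/(SU(2)\times U(1))$ is a one-dimensional torus, hence $\cong S^1$. For $\mathfrak{k}=2\mathfrak{su}(2)\oplus\mathfrak{u}(1)$, i.e. $K=U(2)\times SU(2)$, I would invoke Lemma \ref{U1Kuerzen} exactly as in the $Q^{1,1,1}$ case: the two-dimensional torus $SU(2)\times U(1)$ that we divide out meets the semisimple part $SU(2)\times SU(2)$ of $K$ only in a finite group, so $K/(SU(2)\times U(1))$ reduces to $(SU(2)\times U(1))/U(1)\cong SU(2)\cong S^3$ — note this uses that here the embedded $SU(2)$ is the \emph{diagonal} of the two factors, which is why we get a genuine $S^3$ and not, say, $SO(3)$. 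For $K=SU(3)\times U(1)$, the quotient $K/(SU(2)\times U(1))$ fibres as $SU(3)/(SU(2)\times U(1))$, and since $SU(3)/SU(2)\cong S^5$ and the residual $U(1)$ acts on $S^5\subseteq\mathbb{C}^3$ as the central $\mathbb{Z}_3\subseteq U(1)$ does — namely by $\lambda\operatorname{Id}_{\mathbb{C}^3}$ with $\lambda^3=1$ — we get $S^5/\mathbb{Z}_3$; one must also remark $S^5/\mathbb{Z}_3 \not\cong S^5/\Gamma$ is irrelevant since it \emph{is} a sphere quotient, consistent with Theorem \ref{Most} (orbifold locus). Finally, for $K=SU(3)\times SU(2)$ one must show $M^{1,1,0}\not\cong S^7/\Gamma$ for any finite $\Gamma\subseteq O(8)$: here I would copy the homotopy-exact-sequence argument from Lemma \ref{SingOrbQ111}, using that $M^{1,1,0}$ is a circle bundle over $\mathbb{CP}^2\times S^2$, so the Gysin/homotopy sequence gives $\pi_2(M^{1,1,0})\hookleftarrow\pi_2(\mathbb{CP}^2\times S^2)=\mathbb{Z}^2 \to \pi_1(S^1)=\mathbb{Z}\to\pi_1(M^{1,1,0})\to 0$; since $S^7/\Gamma$ has $\pi_2=0$ and finite $\pi_1$, this forces $\mathbb{Z}^2\to\mathbb{Z}$ surjective with finite cokernel while the preceding map from $\pi_2(S^7/\Gamma)=0$ would make it injective — a contradiction. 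The main obstacle, then, is getting the $U(1)$-actions and the diagonal-versus-standard embeddings of $SU(2)$ correct so that the sphere identifications (especially $S^3$ and $S^5/\mathbb{Z}_3$) come out exactly as tabulated; the bracket-closure classification and the Hopf-fibration computations for the base orbits are comparatively mechanical.
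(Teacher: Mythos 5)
Your overall strategy is the same as the paper's: classify the intermediate algebras $\mathfrak{k}$ by using that $\mathfrak{m}=V_1\oplus V_2\oplus V_3$ has pairwise inequivalent isotropy submodules and checking bracket closure, identify the base orbits via $SU(3)/U(2)\cong\mathbb{CP}^2$ and $SU(2)/U(1)\cong S^2$, and treat the fibers $K/(SU(2)\times U(1))$ by Lemma \ref{U1Kuerzen} and long exact homotopy sequences; your arguments for the rows $S^1$, $S^5/\mathbb{Z}_3$ and for $M^{1,1,0}\not\cong S^7/\Gamma$ coincide with the paper's. A cosmetic imprecision: $\mathfrak{h}\oplus V_1$ and $\mathfrak{h}\oplus V_2$ are themselves not bracket--closed (e.g. $[e_5,e_6]=\tfrac32 e_{11}-e_7$ and $[e_1,e_2]$ both have a component along $e_7$), so every proper intermediate subalgebra contains $V_3$; your ``discard what is not closed'' procedure still yields the correct table, so this is harmless.

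The genuine gap is the second row, $K=U(2)\times SU(2)$ with fiber claimed $\cong S^3$. Your justification does not work as stated: $SU(2)\times U(1)$ is not a two-dimensional torus; the isotropy $SU(2)$ is embedded in the upper-left block of $SU(3)$, hence it is \emph{contained} in the semisimple part $SU(2)\times SU(2)$ of $K$ (it is not ``the diagonal of the two factors''), so the hypothesis that the divided-out group meets the semisimple part only in a finite group is false and Lemma \ref{U1Kuerzen} cannot be invoked in the way you describe. The correct reduction (the paper's) is to quotient first by the common normal $SU(2)$ and then apply Lemma \ref{U1Kuerzen} to what remains; but that lemma only shows that $K/(SU(2)\times U(1))$ is \emph{covered} by $S^3$, with covering group $\Gamma=T\cap G$, and the order of this covering is exactly the nontrivial content of the row. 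It depends on the precise slopes with which the isotropy circle generated by $e_{11}$ winds through the center of $U(2)\subseteq SU(3)$ and through the maximal torus of the second $SU(2)$; this is why the paper computes $\pi_1(i):\pi_1(SU(2)\times U(1))\rightarrow\pi_1(K)$ on an explicit generating loop via the long exact homotopy sequence. Your proposal omits this computation entirely and replaces it with the incorrect ``diagonal'' heuristic, so the claim $\cong S^3$ (as opposed to a finite quotient of $S^3$) is not established. Note that this step is genuinely delicate and cannot be waved through: for instance the loop $\text{diag}(e^{it},e^{it},e^{-2it})$ used in the paper's own argument has determinant winding number $2$, i.e. it represents twice a generator of $\pi_1(S(U(2)\times U(1)))\cong\mathbb{Z}$, so the covering degree hinges on carefully tracking these winding numbers for the actual embedding of the isotropy $U(1)$.
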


\begin{proof} The Lie algebras $\mathfrak{k}$ can be classified by
the same methods as in the proof of Lemma \ref{SingOrbQ111}.
$\mathbb{CP}^2$ can be described as $SU(3)/S(U(2)\times U(1))$.
With help of this fact, we are able to determine the topology of
$(SU(3)\times SU(2))/K$ in all three cases. In order to finish the
proof, we have to check if $K/(SU(2)\times U(1))$ is covered by a
sphere. If $K=U(2)\times U(1)$, this is true, since the circle is
the only one-dimensional, compact, connected, homogeneous space.

Next, we assume that $K=S(U(2)\times U(1))\times SU(2)$.
$\mathfrak{k}$ contains the semisimple part of the isotropy
algebra $\mathfrak{su}(2) \oplus \mathfrak{u}(1)$ as an ideal. We
can therefore cancel one factor of type $SU(2)$ and see that
$K/(SU(2)\times U(1))$ is covered by a space of type $(SU(2)
\times U(1))/U(1)$. According to Lemma \ref{U1Kuerzen}, the
universal cover of $K/(SU(2)\times U(1))$ is the sphere $S^3$. The
long exact homotopy sequence

\begin{equation}
\begin{split}
\ldots & \rightarrow \pi_2(K/(SU(2)\times U(1))) \rightarrow \pi_1(SU(2)\times U(1)) \rightarrow \pi_1(K) \\
& \rightarrow \pi_1(K/(SU(2)\times U(1))) \rightarrow \{0\}
\end{split}
\end{equation}

is in our situation given by

\begin{equation}
\ldots\rightarrow \{0\} \rightarrow \mathbb{Z} \overset{\pi_1(i)}{\longrightarrow} \mathbb{Z} \rightarrow \pi_1(K/(SU(2)\times
U(1))) \rightarrow \{0\}\:.
\end{equation}

The morphism $\pi_1(i)$ is induced by the inclusion $i$ of
$SU(2)\times U(1)$ into $K$. The homotopy class of the circle $\{
\text{diag}(e^{it},e^{it},e^{-2it},e^{3it},e^{-3it})|t\in[0,2\pi]\}$
generates $\pi_1(SU(2)\times U(1))$. This class is mapped by
$\pi_1(i)$ to the homotopy class of
$\{\text{diag}(e^{it},e^{it},e^{-2it},0,0)|t\in[0,2\pi]\}$, since
$SU(2)$ is simply connected.
$\{\text{diag}(e^{it},e^{it},e^{-2it})|t\in[0,2\pi]\}$ generates
the fundamental group of $S(U(2)\times U(1))$. $\pi_1(i)$
therefore is an isomorphism and $K/(SU(2)\times U(1))$ is a
sphere. We proceed to the case $K=SU(3)\times U(1)$: The
intersection of the abelian factor of $SU(2)\times U(1)$ with
$SU(3)\subseteq K$ is:

\begin{equation}
\left\{ \left(\,\begin{array}{lll} e^{\frac{2\pi ik}{3}} & 0 & 0
\\ 0 & e^{\frac{2\pi ik}{3}} & 0 \\ 0 & 0 & e^{-\frac{4\pi
ik}{3}} \\ \end{array}\,\right) =e^{\frac{2\pi
ik}{3}}\:\text{Id}_{\mathbb{C}^3} \right.\left|
\begin{array}{l} \\ \\ \\ \\ \end{array} k\in\mathbb{Z}\right\}
\:.
\end{equation}

We conclude with help of Lemma \ref{U1Kuerzen} that

\begin{equation}
K/(SU(2)\times U(1)) = (SU(3)/SU(2))/
\mathbb{Z}_3=S^5/\mathbb{Z}_3\:,
\end{equation}

where $\mathbb{Z}_3$ is generated by $e^{\frac{2\pi
i}{3}}\:\text{Id}_{\mathbb{C}^3}$. We finally assume that
$M^{1,1,0}$ is diffeomorphic to $S^7/\Gamma$. Since $M^{1,1,0}$ is
a circle bundle over $\mathbb{CP}^2\times S^2$, we have the
following exact sequence:

\begin{equation}
\ldots\rightarrow\underbrace{\pi_2(S^7/\Gamma)}_{=\{0\}}\rightarrow\underbrace{\pi_2(\mathbb{CP}^2\times S^2)}_{=
\mathbb{Z}^2}\rightarrow \underbrace{\pi_1(S^1)}_{=\mathbb{Z}}\rightarrow\ldots\:.
\end{equation}

There exists no injective group homomorphism from $\mathbb{Z}^2$
into $\mathbb{Z}$ and thus we have proven the lemma.
\end{proof}

Depending on the singular orbit, we have three types of initial conditions:

\begin{enumerate}
    \item If the singular orbit is $\mathbb{CP}^2\times S^2$, we have $c(0)=0$.
    \item If the singular orbit is $\mathbb{CP}^2$, we have $b(0)=c(0)=0$.
    \item If the singular orbit is $S^2$, we have $a(0)=c(0)=0$.
\end{enumerate}

The other initial values are non-zero. For all three cases, we have similar sufficient smoothness conditions:

\begin{enumerate}
    \item $a$, $b$, and $c$ are analytic.
    \item The metric on $\mathfrak{p}$ is $K$-invariant for all $t$.
    \item The functions which vanish at the singular orbit are odd and the other ones are even.
    \item If the singular orbit is $\mathbb{CP}^2\times S^2$, the length of the collapsing circle is $2\pi t + O(t^2)$ for
    small values of $t$. In the other cases, the sectional curvature of the collapsing sphere has to be $\tfrac{1}{t^2} +
    O(\tfrac{1}{t})$ for $t\rightarrow 0$.
\end{enumerate}

It follows from (\ref{SolutionabM110}) and (\ref{SolutioncM110})
that the first condition is satisfied. We consider $M^{1,1,0}$ as
a circle bundle over $\mathbb{CP}^2 \times S^2$. Any choice of
$a(t)\in\mathbb{R}\setminus\{0\}$ yields a multiple of the
Fubini-Study metric on $\mathbb{CP}^2$. Analogously, any
$b(t)\in\mathbb{R}\setminus\{0\}$ yields a metric with constant
sectional curvature on $S^2$. Therefore, the second condition is
satisfied, too. Since

\begin{eqnarray}
(a(t),b(t),c(t)) & \mapsto & (a(-t),b(-t),-c(-t)) \\
(a(t),b(t),c(t)) & \mapsto & (a(-t),-b(-t),-c(-t)) \\
(a(t),b(t),c(t)) & \mapsto & (-a(-t),b(-t),-c(-t))
\end{eqnarray}

are symmetries of the system (\ref{HolRedM110}), the third condition is also satisfied.

We finally check the fourth condition for each of our cases
separately and start with $\mathbb{CP}^2\times S^2$ as singular
orbit. Again, let $S$ be the subgroup of $SU(3)\times SU(2)$,
which is generated by $e_7$. The intersection of $S$ and the
isotropy group $SU(2)\times U(1)$ is isomorphic to $\mathbb{Z}_8$.
The smallest $t>0$ such that $\exp{(e_7 t)}$ is the unit element
is $4\pi$. By similar arguments as in Section \ref{Q111}, we have
$|c'(0)|=\tfrac{2\pi}{4\pi}\cdot 8=4$. Unfortunately, it follows
from (\ref{HolRedM110}) that $c'(0)=8$. The metric thus is not
smooth at the singular orbit and we have a singularity which is
similar to the singularity of the metrics with principal orbit
$Q^{1,1,1}$ and singular orbit $S^2\times S^2\times S^2$.

Next, we assume that the singular orbit is $\mathbb{CP}^2$. The collapsing sphere $S^3$ can be identified with the Lie group
$SU(2)$. The metric on $S^3$ with constant sectional curvature $1$ is given by $h(X,Y)=-\tfrac{1}{2}\text{tr}(XY)$ for all
$X,Y\in\mathfrak{su}(2)$. The following matrices are an orthonormal basis of $\mathfrak{su}(2)$ with respect to $h$:

\begin{equation}
\left(\,\begin{array}{cc} i & 0 \\ 0 & -i \\
\end{array}\,\right)\:,\quad\left(\,\begin{array}{cc} 0 & 1 \\ -1 & 0 \\
\end{array}\,\right)\:,\quad\left(\,\begin{array}{cc} 0 & i \\ i & 0 \\
\end{array}\,\right)\:.
\end{equation}

The second and the third matrix correspond to $e_5$ and $e_6$,
since it is the second factor of $SU(3)\times SU(2)$ which acts
irreducibly on $\mathfrak{p}^\perp$. The metric on the collapsing
sphere has sectional curvature $\tfrac{1}{t^2} + O(\tfrac{1}{t})$
only if $\left.\tfrac{\partial}{\partial t}\right|_{t=0}
\sqrt{g_t(e_5,e_5)} = \left.\tfrac{\partial}{\partial
t}\right|_{t=0} \sqrt{g_t(e_6,e_6)} = 1$ or equivalently if
$|b'(0)|=1$. Since the collapsing circle from the last case is
also a great circle of $S^3$, we again have $|c'(0)|=4$ as
smoothness condition. Any solution of (\ref{HolRedM110}) with
$b(0)=c(0)=0$ indeed satisfies $b'(0)\in\{-1,1\}$ and $c'(0)=4$.
The reason for the different values of $c'(0)$ in both cases is
that we now have to apply l'H\^{o}pital's rule. We remark that it
suffices to restrict ourselves to the case $b'(0)=1$, since we can
replace $b$ by $-b$ without changing (\ref{HolRedM110}). All in
all, we have proven that our metric is smooth if the singular
orbit is $\mathbb{CP}^2$.

We finally assume that the singular orbit is $S^2$. By similar
arguments as in the previous case, we obtain $|a'(0)|=1$ and
$|c'(0)|=4$ as smoothness conditions. The collapsing sphere is in
fact a space form of type $S^5/\mathbb{Z}_3$. Nevertheless, we
would obtain a smooth orbifold metric if our conditions were
satisfied. The system (\ref{HolRedM110}) yields $a'(0)=\pm 1$ and
$c'(0)=\tfrac{8}{3}$ if $a(0)=c(0)=0$. Our metric therefore has a
singularity. At the end of this section, we sum up our results:

\begin{Th}
Let $(M,\Omega)$ be a $\text{Spin}(7)$-orbifold with a
cohomogeneity one action of $SU(3)\times SU(2)$ which preserves
$\Omega$. We assume that all principal orbits are $SU(3)\times
SU(2)$-equivariantly diffeomorphic to a space of type $M^{k,l,0}$.
In this situation, the following statements are true:

\begin{enumerate}
   \item The principal orbits are $SU(3)\times SU(2)$-equivariantly diffeomorphic to $M^{1,1,0}$.
   \item The metric $g$ which is associated to $\Omega$ is Ricci-flat and K\"ahler and its holonomy is all of $SU(4)$.
   \item The restriction of $g$ to a principal orbit can be written as (\ref{M110GenMetric}). The coefficient functions $a$,
   $b$, and $c$ satisfy the equations (\ref{HolRedM110}), whose solutions are described by (\ref{SolutionabM110}) and
   (\ref{SolutioncM110}). The K\"ahler form is given by (\ref{EtaM110}).
\end{enumerate}

If $M$ has a singular orbit, which has to be the case if $(M,g)$
is complete, it is $\mathbb{CP}^2\times S^2$, $\mathbb{CP}^2$, or
$S^2$. Any $SU(3)\times SU(2)$-invariant metric on the singular
orbit can be extended to a unique complete cohomogeneity one
metric with holonomy $SU(4)$. For any choice of the singular orbit
and its metric, $(M,g)$ is asymptotically conical.

\begin{enumerate}
    \item If the singular orbit is $\mathbb{CP}^2\times S^2$, $M$
    is a manifold, but the metric $g$ is not smooth at the
    singular orbit.
    \item If the singular orbit is $\mathbb{CP}^2$, $M$ is a
    manifold and the metric is smooth.
    \item If the singular orbit is $S^2$, $M$ is an orbifold but
    not a manifold. At the singular orbit, the metric is not
    a smooth orbifold metric. Topologically, $M$ is a
    $\mathbb{C}^3/\mathbb{Z}_3$-bundle over $S^2$.
\end{enumerate}
\end{Th}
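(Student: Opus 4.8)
The plan is to assemble the theorem from the material built up in this section, organized into four blocks: identification of the principal orbit, the holonomy reduction together with the explicit metric, the classification of the possible singular orbits with uniqueness and completeness of the completion, and the smoothness analysis at each singular orbit. For the first block, by Lemma \ref{G2HomLemma} a space $M^{k,l,0}$ carries an $SU(3)\times SU(2)$-invariant $G_2$-structure precisely when the isotropy representation of $\mathfrak{su}(2)\oplus\mathfrak{u}(1)$ on $\mathfrak{m}$ is conjugate into the standard representation of $\mathfrak{g}_2$; comparing the weights of this action with those of the subalgebra $\mathfrak{su}(2)\oplus\mathfrak{u}(1)\subseteq\mathfrak{g}_2$ fixed above forces $k=l=\pm 1$, which is item~1.

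For the holonomy and the explicit metric, the ansatz (\ref{M110GenMetric}) is the general invariant metric by Schur's lemma applied to $\mathfrak{m}=V_1\oplus V_2\oplus V_3$, the adapted basis (\ref{CBasisM110}) turns $\Omega$ into the four-form written above, and $d\Omega=0$ becomes the system (\ref{HolRedM110}); integrating its first two equations yields (\ref{SolutionabM110}), and substituting into the third and passing to $\widetilde c(t)=c(C^{-1}(t))$ produces a linear ODE in $c^2$ with solution (\ref{SolutioncM110}). The isometric $U(1)$-action $f_\theta$ generated by $e_7$ moves $\Omega$ along a circle of parallel $\text{Spin}(7)$-structures with the same associated metric, so Lemma \ref{SU4Lemma} gives holonomy contained in $SU(4)$; a parallel vector field is excluded exactly as in Section \ref{Q111}, and $Sp(2)$ is excluded by checking that the only invariant closed two-form compatible with (\ref{HolRedM110}) is $\pm\eta$ with $\eta$ as in (\ref{EtaM110}), so at most two linearly independent K\"ahler forms occur. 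Ricci-flatness then follows from the Bonan--Wang theorem, and the K\"ahler property from holonomy $\subseteq SU(4)$ via the Remark characterizing that condition; this gives items~2 and~3.

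The classification lemma above lists the connected groups $K$ with $SU(2)\times U(1)\subsetneq K\subseteq SU(3)\times SU(2)$ and records which collapsing fibres are (quotients of) spheres: $U(2)\times U(1)$ with fibre $S^1$ over $\mathbb{CP}^2\times S^2$, $U(2)\times SU(2)$ with fibre $S^3$ over $\mathbb{CP}^2$, and $SU(3)\times U(1)$ with fibre $S^5/\mathbb{Z}_3$ over $S^2$, while $K=SU(3)\times SU(2)$ is impossible since $M^{1,1,0}$ is not a spherical space form; this is exactly the stated list of singular orbits. Each forces the vanishing initial conditions ($c(0)=0$; $b(0)=c(0)=0$; $a(0)=c(0)=0$, respectively), and since (\ref{HolRedM110}), after l'H\^{o}pital's rule is applied to the terms $\tfrac{c}{a^2}$ and $\tfrac{c}{b^2}$, has a unique solution for every admissible choice of the remaining, non-vanishing initial values, the completion is unique. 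Because $\widetilde c(t)=c\sqrt t+O(1)$, the functions $a,b,c$ approach the linear profile (\ref{M110NP}), so $(M,g)$ is asymptotically conical and in particular complete; conversely, the discussion of the shape of $M/G$ in Section \ref{CohomOneSec} forces a singular orbit to exist whenever $(M,g)$ is complete.

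It remains to apply the smoothness criterion from the end of Section \ref{2ndSection} at each singular orbit. Analyticity of $a,b,c$ is immediate from the explicit solutions; the horizontal part is $K$-invariant because each $a(t)$ rescales the Fubini--Study metric on $\mathbb{CP}^2$ and each $b(t)$ a round metric on $S^2$; and the parity conditions follow from the three sign symmetries of (\ref{HolRedM110}). Only the fourth condition distinguishes the cases, and evaluating it is the delicate point. For $\mathbb{CP}^2\times S^2$ the collapsing circle is traversed once by $\exp(te_7)$ over a period $4\pi$ with $S\cap(SU(2)\times U(1))\cong\mathbb{Z}_8$, so smoothness requires $|c'(0)|=4$, whereas (\ref{HolRedM110}) gives $c'(0)=8$; hence $g$ is not smooth, but $M$ is a manifold since the fibre is $S^1$. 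For $\mathbb{CP}^2$ the round-$S^3$ normalization gives $|b'(0)|=1$ and $|c'(0)|=4$, and l'H\^{o}pital applied to (\ref{HolRedM110}) yields exactly $b'(0)=\pm 1$ and $c'(0)=4$, so $g$ is smooth and $M$ a manifold. For $S^2$ one needs $|a'(0)|=1$ and $|c'(0)|=4$, but (\ref{HolRedM110}) gives $a'(0)=\pm 1$ and $c'(0)=\tfrac{8}{3}$, so $g$ is not even a smooth orbifold metric; and since the fibre is the genuine space form $S^5/\mathbb{Z}_3$ with $\mathbb{Z}_3=\{\lambda\,\mathrm{Id}_{\mathbb{C}^3}\mid\lambda^3=1\}$, $M$ is an orbifold but not a manifold, topologically the $\mathbb{C}^3/\mathbb{Z}_3$-bundle over $(SU(3)\times SU(2))/(SU(3)\times U(1))\cong S^2$ associated to the frame bundle. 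The main obstacle is precisely this last block: extracting the correct boundary derivatives $a'(0),b'(0),c'(0)$ from the degenerate system (\ref{HolRedM110}) by l'H\^{o}pital and matching them against the sphere normalizations $\ell'(0)=2\pi$ and sectional curvature $t^{-2}+O(t^{-1})$, since any slip in these constants would flip the smoothness verdict.
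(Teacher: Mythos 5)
Your proposal is correct and follows essentially the same route as the paper: weight comparison via Lemma \ref{G2HomLemma} to force $M^{1,1,0}$, the invariant-metric ansatz and the system (\ref{HolRedM110}) with its explicit integration, the circle of parallel $\text{Spin}(7)$-structures plus the K\"ahler-form computation to pin the holonomy to exactly $SU(4)$, the classification lemma for the groups $K$, and the Eschenburg--Wang smoothness check with the same boundary constants ($c'(0)=8$ versus the required $4$ for $\mathbb{CP}^2\times S^2$; $b'(0)=\pm1$, $c'(0)=4$ for $\mathbb{CP}^2$; $a'(0)=\pm1$, $c'(0)=\tfrac{8}{3}$ for $S^2$). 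No gaps beyond minor phrasing (the collapsing circle has period $\tfrac{4\pi}{8}$ in the quotient, which is what yields $|c'(0)|=4$), so the argument matches the paper's proof.
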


\begin{Rem}
The K\"ahler metrics with singular orbit $\mathbb{CP}^2\times S^2$
have been considered by Herzog and Klebanov in \cite{Herzog} and
by Cveti\v{c} et al. in \cite{Cve01}. The examples with singular
orbit $\mathbb{CP}^2$ or $S^2$ are mentioned by the same authors
in \cite{Cve01} and \cite{Cve03}. Our considerations prove that
any complete cohomogeneity one orbifold metric whose principal
orbits are $M^{1,1,0}$ and whose holonomy is a subgroup of
$\text{Spin}(7)$ is one of our examples. This result is, as far as
the author knows, new.
\end{Rem}

\end{document}